\documentclass{amsart}

\usepackage[utf8]{inputenc}
\usepackage[numbers]{natbib} % or [authoryear]
\usepackage{hyperref}
\usepackage{float}
\usepackage{amsmath, amssymb, amsthm}
\usepackage{geometry}
\usepackage{graphicx}
\usepackage{mathrsfs} % For script fonts
\usepackage{tikz-cd}

\newtheorem{theorem}{Theorem}[section]

\newtheorem{lemma}[theorem]{Lemma}
\newtheorem{corollary}[theorem]{Corollary}
\theoremstyle{definition}
\newtheorem{definition}[theorem]{Definition}
\theoremstyle{remark}

\theoremstyle{proposition}
\newtheorem{proposition}[theorem]{Proposition}

\theoremstyle{definition}
\newtheorem{example}{Example}[section]
\title{Constructing Maximal Cohen-Macaulay Sheaves on Symplectic Singularities}
\author{Shang Xu}

\begin{document}
% Title Page

% Abstract
\begin{abstract}
In this paper, we study maximal Cohen-Macaulay sheaves on symplectic singularities. These sheaves generate the singularity categories and thus measure how far a singularity is from being smooth. We lift maximal Cohen-Macaulay sheaves on a singular variety to reflexive sheaves on its resolution and use Grothendieck duality to study their cohomological vanishing. We work this out in detail for the resolution $T^*\mathbb{P}^2 \rightarrow \mathcal{N}_{3,1}$, where $\mathcal{N}_{j,k}$ denotes the variety of nilpotent $j\times j$ matrices of rank at most $k$. In this case, we characterize the reflexive sheaves on $T^*\mathbb{P}^2$ whose pushforwards are maximal Cohen-Macaulay, and use vanishing results on $\mathbb{P}^2$ to construct many indecomposable maximal Cohen-Macaulay sheaves on $\mathcal{N}_{3,1}$. We also extend this construction to the resolution
$T^*\mathbb{P}^n \to \mathcal{N}_{n+1,1}$.
\end{abstract}

\maketitle

\tableofcontents
\section{Introduction}
Maximal Cohen-Macaulay sheaves play a central role in the study of singular algebraic varieties. They arise naturally as higher order syzygies of free resolutions and may be viewed as the sheaves that are as close as possible to vector bundles on a smooth space. In the Gorenstein case, they have full depth, admit no higher $\operatorname{Ext}$ against the structure sheaf, and form the generating objects of the singularity category (cf. \cite{Buchweitz1986},\cite{Orlov2004}). In this sense, they measure how far a singularity is from being smooth while still retaining enough rigidity to reflect subtle geometric and homological properties of the singularity. They also appear naturally in the study of non-commutative resolutions (\cite{Leuschke2007}, \cite{IyamaWemyss2014}), derived categories, and representation-theoretic approaches to birational geometry. 

On normal Cohen-Macaulay surfaces, reflexive sheaves are precisely maximal Cohen-Macaulay sheaves. Thus, the maximal Cohen-Macaulay condition can be regarded as a higher-dimensional analog of reflexivity. Indeed, on an $n$-dimensional Cohen-Macaulay scheme, maximal Cohen-Macaulay sheaves are exactly those satisfying Serre’s condition $S_n$, while on normal schemes, reflexive sheaves are exactly those satisfying $S_2$.

Symplectic singularities are those singular spaces whose smooth loci carry holomorphic symplectic forms extending across
resolutions in a controlled way. Grothendieck duality (cf. \cite{HartshorneRD}, III) for coherent sheaves provides a bridge between the homological properties of sheaves on these singular varieties and those on their resolutions, and this makes it possible to control the cohomology and Ext groups of sheaves on the resolution in order to construct maximal Cohen-Macaulay sheaves on the symplectic singularity.

The basic two-dimensional examples are the Kleinian, or ADE, surface singularities. In this case, a foundational result of Artin and Verdier shows that reflexive (hence maximal Cohen-Macaulay on surfaces) modules over rational double points are classified in terms of this ADE geometry, giving one of the earliest and most concrete manifestations of the McKay correspondence in algebraic geometry, cf. \cite{ArtinVerdierEsnault1985}, \cite{EsnaultKnorrer1985}. This also implies that the singular categories of rational double points are remarkably rigid and are governed by the corresponding ADE Dynkin diagrams. 

This surface case also admits a rich moduli theory. In particular, in \cite{Ishii1992} Ishii studied the stable reflexive sheaves and their deformations on those surfaces with only rational double points. It is proved there the general existence of reflexive sheaves with specific Chern class at each singularity, and that the singularities in the moduli space of reflexives have the same type as the surface itself. This reveals the importance of maximal Cohen-Macaulay sheaves in the sense that the singularities can be rebuilt through the moduli space of those sheaves. \cite{deBobadillaRomano2024} also studied the deformation and moduli of reflexive sheaves on normal Gorenstein Stein surfaces.

Motivated by this picture, it is natural to investigate maximal Cohen-Macaulay sheaves on higher-dimensional symplectic singularities and to ask how much of the ADE story survives: whether such sheaves can be described through geometry on a symplectic resolution, how vanishing and duality constrain them, and to what extent they admit explicit constructions and moduli-theoretic interpretations.

The aim of this paper is to study maximal Cohen-Macaulay sheaves on various symplectic singularities. The prototypes are Kleinian (ADE) surfaces , where all maximal Cohen-Macaulay sheaves are completely classified. Besides those prototypes, Springer resolutions (cf.\cite{ChrissGinzburg1997}) also form a rich family of symplectic resolutions, and the ADE singularities appear as singularities on slices (cf. \cite{Brieskorn1970}). While these are close to our prototypes since they are symplectic, Nakajima quiver varieties constructed in \cite{Nakajima1994Instantons} provide better generalizations, as one can choose Dynkin quivers to recover Kleinian surfaces. Therefore, our main interest is in how to construct maximal Cohen-Macaulay modules on those singular quiver varieties.

The paper is organized as follows. In Section \ref{S2}, we introduce the symplectic singularities that we are going to work on. In Section \ref{S3}, we establish some general properties of maximal Cohen-Macaulay sheaves on symplectic singularities. The main tool we use is the spectral sequence derived from Grothendieck duality in Lemma \ref{lem3.10}. In Section \ref{S4}, we apply our general theories to the resolution $T^*\mathbb{P}^2\to\mathcal{N}_{3,1}$ and characterize the maximal Cohen-Macaulay sheaves on $\mathcal{N}_{3,1}$ using reflexive sheaves on $T^*\mathbb{P}^2$. Here $\mathcal{N}_{j,k}$ denotes the variety of nilpotent $j\times j$ matrices of rank at most $k$. Specifically, we show that
\begin{proposition}[Proposition \ref{prop4.2}]\label{prop1.1}
Suppose $\mathcal{F}$ is a reflexive sheaf on $T^*\mathbb{P}^2$. Then $\pi_*\mathcal{F}$ is reflexive. It is maximal Cohen-Macaulay if and only if
\begin{enumerate}
\item $R^1\pi_*\mathcal{F}=0$.
\item $\text{Ext}^1_\mathcal{\widetilde{O}}(\mathcal{F},\mathcal{\widetilde{O}})=0$.
\item The map $ (R^2\pi_*\mathcal{F})'=\text{Ext}^4_\mathcal{O}(R^2\pi_*\mathcal{F},\mathcal{O})\rightarrow\text{Ext}^2_\mathcal{\widetilde{O}}(\mathcal{F},\mathcal{\widetilde{O}})$ is an isomorphism.
\end{enumerate}    
\end{proposition}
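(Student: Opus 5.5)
Write $X=T^*\mathbb{P}^2$, $\widetilde{\mathcal{O}}=\mathcal{O}_X$, $Y=\mathcal{N}_{3,1}$, $\mathcal{O}=\mathcal{O}_Y$. The map $\pi\colon X\to Y$ is a crepant resolution of a $4$-dimensional normal Gorenstein (symplectic) singularity, so $\omega_X\cong\widetilde{\mathcal{O}}$ and $\omega_Y\cong\mathcal{O}$. It is an isomorphism away from the cone point $0$, and the exceptional fibre is $\mathbb{P}^2$. Consequently $R^i\pi_*\mathcal{F}$ vanishes for $i\ge 3$ and is supported at $0$ for $i=1,2$.

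\textbf{Reflexivity.} I would argue directly. Since $\mathcal{F}$ is torsion free, so is $\pi_*\mathcal{F}$. On $U=Y\setminus\{0\}$ the sheaf $\pi_*\mathcal{F}$ agrees with $\mathcal{F}$. A section over $U$ lifts to $X\setminus\mathbb{P}^2$. The exceptional $\mathbb{P}^2$ has codimension $2$ in $X$, so by reflexivity of $\mathcal{F}$ (Hartogs for $S_2$ sheaves) the section extends over all of $X$. Hence $\pi_*\mathcal{F}=j_*j^*\pi_*\mathcal{F}$ with $j\colon U\hookrightarrow Y$, and $\operatorname{codim}\{0\}=4\ge2$, so $\pi_*\mathcal{F}$ is reflexive.

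\textbf{The maximal Cohen--Macaulay criterion.} On the Gorenstein $Y$, $\pi_*\mathcal{F}$ is maximal Cohen--Macaulay if and only if $\mathcal{E}xt^i_{\mathcal{O}}(\pi_*\mathcal{F},\mathcal{O})=0$ for $i=1,2,3$; the higher ones vanish automatically. I would use the Grothendieck duality spectral sequence of Lemma \ref{lem3.10}. Since $\omega$ is trivial on both sides, it reads
\[E_2^{p,q}=\mathcal{E}xt^p_{\mathcal{O}}(R^{-q}\pi_*\mathcal{F},\mathcal{O})\Rightarrow R^{p+q}\pi_*\mathcal{H}om(\mathcal{F},\widetilde{\mathcal{O}}),\]
where the abutment is really $R\pi_*R\mathcal{H}om(\mathcal{F},\widetilde{\mathcal{O}})$. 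I would rather use the global form $\operatorname{Ext}$, taken over the affine base, which abuts to $\operatorname{Ext}^{p+q}_{\widetilde{\mathcal{O}}}(\mathcal{F},\widetilde{\mathcal{O}})$ as in the statement. The rows are $q=0,-1,-2$. For $q=-1,-2$ the sheaves are finite length at $0$, so by local duality on the $4$-dimensional Gorenstein local ring only $p=4$ survives. Write $M'=\operatorname{Ext}^4(M,\mathcal{O})$.

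The $E_2$ page then has the row $q=0$ given by $\operatorname{Ext}^p(\pi_*\mathcal{F},\mathcal{O})$ for $p=0,\dots,4$, together with the two entries $(R^1\pi_*\mathcal{F})'$ at $(4,-1)$ and $(R^2\pi_*\mathcal{F})'$ at $(4,-2)$. In total degree $n$ the abutment is $\operatorname{Ext}^n(\mathcal{F},\widetilde{\mathcal{O}})$. The possible differentials are:
\begin{itemize}
\item $d_2\colon E^{2,0}\to E^{4,-1}$;
\item $d_3\colon E^{1,0}\to E^{4,-2}$.
\end{itemize}
These give:
\begin{itemize}
\item \emph{Degree 1:} an exact sequence $0\to E_\infty^{1,0}\to\operatorname{Ext}^1(\mathcal{F},\widetilde{\mathcal{O}})\to0$, with $E_\infty^{1,0}=\ker d_3$.
\item \emph{Degree 2:} $0\to E_\infty^{2,0}\to\operatorname{Ext}^2(\mathcal{F},\widetilde{\mathcal{O}})\to E_\infty^{4,-2}\to0$, where $E_\infty^{4,-2}=\operatorname{coker}(d_3)$ and the edge map is the map in condition (3).
\item \emph{Degree 3:} $E_\infty^{3,0}$ together with $E_\infty^{4,-1}=\operatorname{coker}d_2$.
\end{itemize}

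For $(\Rightarrow)$, assume $\operatorname{Ext}^{1,2,3}(\pi_*\mathcal{F},\mathcal{O})=0$. Then the differentials vanish. Degree $1$ gives $\operatorname{Ext}^1(\mathcal{F},\widetilde{\mathcal{O}})=0$. Degree $2$ shows that $(R^2\pi_*\mathcal{F})'\to\operatorname{Ext}^2(\mathcal{F},\widetilde{\mathcal{O}})$ is an isomorphism. Degree $3$ gives $\operatorname{Ext}^3(\mathcal{F},\widetilde{\mathcal{O}})\cong(R^1\pi_*\mathcal{F})'$. To obtain $R^1\pi_*\mathcal{F}=0$ I need an extra input, for example $\operatorname{Ext}^3_{\widetilde{\mathcal{O}}}(\mathcal{F},\widetilde{\mathcal{O}})=0$. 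For reflexive $\mathcal{F}$ on the smooth $4$-fold $X$, the local $\mathcal{E}xt^3(\mathcal{F},\widetilde{\mathcal{O}})$ is supported in codimension $\ge4$ and is in fact $0$, because a reflexive sheaf has depth $\ge2$ at points and hence projective dimension $\le2$. So $\operatorname{Ext}^3(\mathcal{F},\widetilde{\mathcal{O}})=H^0(\mathcal{E}xt^3)\oplus\dots$, computed by the local-to-global spectral sequence on $X$ with $H^i$ of $\mathcal{E}xt^j$ over the fibre.

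This is the step I expect to be the main obstacle: controlling the higher cohomology terms $H^1(\mathcal{E}xt^2)$, $H^2(\mathcal{E}xt^1)$ and $H^3(\mathcal{F}^\vee)$. The last vanishes because the fibres have dimension $2$. I would try to show that the relevant contributions vanish or are compatible, possibly applying the same duality argument to $\mathcal{F}^\vee$ in place of $\mathcal{F}$.

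For $(\Leftarrow)$, conditions (1) and (2) kill $E^{4,-1}$ and $E_\infty^{1,0}$. Then $d_2=0$, and condition (3) forces $d_3=0$ and $E_\infty^{2,0}=0$. Hence $\operatorname{Ext}^1=\operatorname{Ext}^2=0$ on $Y$. For $\operatorname{Ext}^3(\pi_*\mathcal{F},\mathcal{O})=E_\infty^{3,0}$ I again need $\operatorname{Ext}^3(\mathcal{F},\widetilde{\mathcal{O}})=0$, which I would obtain by the same local-to-global analysis.
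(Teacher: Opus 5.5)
Your use of the duality spectral sequence matches the paper's: the same two differentials $d_2\colon E^{2,0}\to E^{4,-1}$ and $d_3\colon E^{1,0}\to E^{4,-2}$, and the same degree-by-degree reading. Your Hartogs argument for reflexivity of $\pi_*\mathcal{F}$ is a valid substitute for the paper's route, which reads off $\pi_*\mathcal{F}=(\pi_*\mathcal{F}^\vee)^\vee$ from the degree-$0$ term of the spectral sequence for $\mathcal{F}^\vee$.

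\textbf{The gap.} You never prove $\operatorname{Ext}^3_{\widetilde{\mathcal{O}}}(\mathcal{F},\widetilde{\mathcal{O}})=0$, which is exactly the step you flag and then leave open. Without it, $(\Rightarrow)$ stops at $\operatorname{Ext}^3_{\widetilde{\mathcal{O}}}(\mathcal{F},\widetilde{\mathcal{O}})\cong(R^1\pi_*\mathcal{F})'$, which says nothing about whether $R^1\pi_*\mathcal{F}$ vanishes. Likewise, $(\Leftarrow)$ never kills $E_\infty^{3,0}=\operatorname{Ext}^3_{\mathcal{O}}(\pi_*\mathcal{F},\mathcal{O})$. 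Saying you would ``try to show the contributions vanish or are compatible'' is not an argument.

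\textbf{How to close it.} The missing input is the paper's Lemma \ref{lem3.11}. It only requires running your depth count at \emph{all} points of $X$, not just closed ones.
\begin{enumerate}
\item Let $P\in X$ have codimension $h_P$. The $S_2$ sheaf $\mathcal{F}$ is free at $P$ if $h_P\le 2$, and has projective dimension $\le h_P-2$ if $h_P>2$. Hence $\mathcal{E}xt^j(\mathcal{F},\widetilde{\mathcal{O}})_P=0$ whenever $j\ge h_P-1$.
\item It follows that $\dim\operatorname{Supp}\mathcal{E}xt^j(\mathcal{F},\widetilde{\mathcal{O}})\le 2-j$ for $j\ge 1$. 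So $\mathcal{E}xt^1$ lives on a curve, $\mathcal{E}xt^2$ lives on finitely many points, and $\mathcal{E}xt^3=0$.
\item Grothendieck vanishing then kills $H^2(\mathcal{E}xt^1)$ and $H^1(\mathcal{E}xt^2)$.
\item Also $H^3(X,\mathcal{F}^\vee)=H^0(R^3\pi_*\mathcal{F}^\vee)=0$, since the fibres have dimension at most $2$ and the base is affine.
\item The local-to-global spectral sequence now gives $\operatorname{Ext}^3_{\widetilde{\mathcal{O}}}(\mathcal{F},\widetilde{\mathcal{O}})=0$, and likewise $\operatorname{Ext}^4=0$. Both directions then close exactly as in the paper.
\end{enumerate}

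\textbf{Shortcuts that avoid the lemma.} Each direction can also be finished directly.
\begin{itemize}
\item For $(\Leftarrow)$: $\operatorname{Ext}^3_{\mathcal{O}}(\pi_*\mathcal{F},\mathcal{O})=0$ already follows from $\operatorname{depth}\ge 2$ of the reflexive module $\pi_*\mathcal{F}$, by local duality at $0$ and Auslander--Buchsbaum at the smooth points. This is the same reasoning that would justify your unproved remark that $\operatorname{Ext}^4$ ``vanishes automatically''.
\item For $(\Rightarrow)$: $H^1_{\mathbb{P}^2}(X,\mathcal{F})=0$ for an $S_2$ sheaf along a codimension-$2$ subset. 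Hence $H^1(X,\mathcal{F})$ injects into $H^1(X\setminus\mathbb{P}^2,\mathcal{F})\cong H^2_{0}(\pi_*\mathcal{F})$, and this is zero when $\pi_*\mathcal{F}$ is maximal Cohen--Macaulay.
\end{itemize}
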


We are also interested in constructing concrete examples of maximal Cohen-Macaulay sheaves on those symplectic singular varieties. Proposition \ref{prop3.14} shows that a sheaf whose higher direct images, as well as those of its dual, vanish gives rise to a maximal Cohen-Macaulay sheaf on the singularity. We then use this criterion in Sections \ref{S4} and \ref{s5} to construct maximal Cohen-Macaulay sheaves on the subregular orbits or nilpotent cone. On our 4-dimensional example, we have found indecomposable maximal Cohen-Macaulay sheaves of all positive ranks.
\begin{theorem}[Corollary \ref{coro4.19}]
For any integer $r>0$, there exists an indecomposable maximal Cohen-Macaulay sheaf on $\mathcal{N}_{3,1}$.
\end{theorem}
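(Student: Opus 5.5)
The plan is to build the sheaves on the resolution $\pi\colon T^*\mathbb{P}^2\to\mathcal{N}_{3,1}$ and push them down, using the criterion of Proposition \ref{prop3.14}. That criterion says: if $\mathcal{F}$ is reflexive (here, locally free) on $T^*\mathbb{P}^2$ and $R^i\pi_*\mathcal{F}=0$ and $R^i\pi_*\mathcal{F}^\vee=0$ for all $i>0$, then $\pi_*\mathcal{F}$ is maximal Cohen-Macaulay.

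Write $p\colon T^*\mathbb{P}^2\to\mathbb{P}^2$ for the projection. Since $\mathcal{N}_{3,1}$ is affine, the vanishing of $R^i\pi_*$ amounts to the vanishing of $H^i(T^*\mathbb{P}^2,-)$. For $\mathcal{F}=p^*E$ with $E$ a vector bundle on $\mathbb{P}^2$, pushing forward along the affine map $p$ gives
\[
H^i(T^*\mathbb{P}^2,p^*E)=\bigoplus_{k\ge 0}H^i\bigl(\mathbb{P}^2,E\otimes \operatorname{Sym}^k T_{\mathbb{P}^2}\bigr).
\]
So the task becomes: find, for each $r>0$, an indecomposable bundle $E$ of rank $r$ on $\mathbb{P}^2$ such that
\[
H^i\bigl(E\otimes\operatorname{Sym}^kT\bigr)=0 \quad\text{and}\quad H^i\bigl(E^\vee\otimes\operatorname{Sym}^kT\bigr)=0 \qquad\text{for } i=1,2 \text{ and all } k\ge0.
\]

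To control these groups I would use the following facts.
\begin{enumerate}
\item $\operatorname{Sym}^kT_{\mathbb{P}^2}$ is a homogeneous bundle with a filtration, coming from the Euler sequence, whose pieces are sums of $\mathcal{O}(j)$ with $j$ between $k$ and $2k$. Equivalently, one can use Bott vanishing.
\item If $E$ and $E^\vee$ both have a resolution or filtration by line bundles $\mathcal{O}(a)$ with $a\ge -1$, then tensoring with $\operatorname{Sym}^kT$ only produces twists of degree at least $-1$. On $\mathbb{P}^2$ these have $H^1=H^2=0$.
\end{enumerate}

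The natural candidates are bundles $E$ that are iterated nonsplit extensions built from $\mathcal{O}$, $\mathcal{O}(1)$, $\mathcal{O}(-1)$, and their duals, kept close to $\mathcal{O}$ so that $E\otimes\operatorname{Sym}^kT$ stays in degrees where $\mathbb{P}^2$ has no higher cohomology. Typical examples are the Euler-type bundles $T(-1)$ and $\Omega(1)$, together with higher-rank analogues. Such analogues could come from nonsplit extensions
\[
0\to \mathcal{O}^{a}\to E\to \mathcal{O}(1)^{b}\to 0
\]
built using $\operatorname{Ext}^1(\mathcal{O}(1),\mathcal{O})=H^1(\mathcal{O}(-1))=0$. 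That vanishing is a problem, so instead I would use the Euler sequence, or restrictions of $\operatorname{Sym}^m$ and $\wedge$-powers of $T(-1)$. Tensor and symmetric powers of homogeneous bundles associated to irreducible $GL_3$-representations are indecomposable, and the Borel–Weil–Bott theorem computes all of their cohomology. Ranks can be adjusted by combining $\operatorname{Sym}^m T(-1)$ (rank $m+1$) and similar bundles, aiming to cover every $r$. The cohomology check for $E\otimes\operatorname{Sym}^kT$ and its dual then reduces to a Borel–Weil–Bott computation for a Littlewood–Richardson decomposition, where one verifies that no weight is singular-shifted into degree $1$ or $2$.

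Two points remain after the vanishing check.
\begin{enumerate}
\item \textbf{Indecomposability downstairs.} Since $\pi$ is an isomorphism off codimension $\ge 2$ and $\pi_*p^*E$ is reflexive, $\operatorname{End}(\pi_*p^*E)=\operatorname{End}(p^*E)=\bigoplus_k H^0(\operatorname{End}E\otimes\operatorname{Sym}^kT)$. This is a graded ring whose degree-$0$ part is $\operatorname{End}(E)$, which is local. Hence $\pi_*p^*E$ is indecomposable, because idempotents in a nonnegatively graded ring with local degree-$0$ part are trivial.
\item \textbf{Rank.} The rank of $\pi_*p^*E$ equals the rank of $E$.
\end{enumerate}

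I expect the main obstacle to be simultaneous vanishing for $E$ and $E^\vee$ for every $k$. Negativity in $E^\vee$ tends to create $H^2$ after twisting by low $k$. For example, $\operatorname{Sym}^m T(-1)$ has dual $\operatorname{Sym}^m\Omega(1)$, and tensoring with $\operatorname{Sym}^kT$ may produce weights with nonzero $H^1$. If that happens, I would switch to non-pullback reflexive sheaves on $T^*\mathbb{P}^2$, for example twists of pullbacks by the tautological relative structure, and rely on the $\operatorname{Ext}$ criterion of Proposition \ref{prop4.2} instead.
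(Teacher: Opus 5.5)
Your reduction is correct and matches the paper. Proposition \ref{prop3.14} applied to $p^*E$, together with $H^i(p^*E)=\bigoplus_k H^i(E\otimes\operatorname{Sym}^kT)$, is exactly Proposition \ref{prop4.4} and Corollary \ref{coro4.5}. Your graded endomorphism ring argument is a valid alternative to the paper's restriction-to-the-zero-section proof of Proposition \ref{prop4.6}.

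\textbf{The gap.} You never exhibit, for each $r$, an indecomposable rank-$r$ bundle $E$ on $\mathbb{P}^2$ with $H^{>0}(E(n))=H^{>0}(E^\vee(n))=0$ for all $n\ge 0$; by Proposition \ref{prop4.4} this is what your $\operatorname{Sym}^kT$ vanishing amounts to. That is the heart of the corollary, and the candidates you name cannot supply it. Iterated extensions of line bundles split because $H^1(\mathcal{O}_{\mathbb{P}^2}(n))=0$ for all $n$, as you noticed. The irreducible homogeneous bundles are exactly $S_{m,j}:=\operatorname{Sym}^mT(-1)\otimes\mathcal{O}(j)$. They restrict to every line as $\mathcal{O}(j)\oplus\cdots\oplus\mathcal{O}(j+m)$. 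The paper's lemma in Section 4.2 forces all splitting degrees on lines into $[-2,2]$, which leaves only $m\le 4$. Moreover $S_{m,j}^\vee\cong S_{m,-m-j}$. Take $\mathbb{P}^2$ as the space of lines in $V=\mathbb{C}^3$. The sequence $0\to\operatorname{Sym}^{m-1}V\otimes\mathcal{O}(-3)\to\operatorname{Sym}^mV\otimes\mathcal{O}(-2)\to S_{m,-2}\to 0$ gives $H^1(S_{m,-2})\cong\operatorname{Sym}^{m-1}V\neq 0$. So neither $E$ nor $E^\vee$ may be $S_{m,-2}$. This excludes your example $\operatorname{Sym}^2T(-1)=S_{2,0}$, whose dual is $S_{2,-2}=\operatorname{Sym}^2\Omega(1)$, and every case with $m=3,4$. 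Beyond rank $2$ the only survivor is $S_{2,-1}$, of rank $3$, so the homogeneous route stops at rank $3$. Your claim that tensor powers of irreducible homogeneous bundles are indecomposable is also false: for example $\Omega(1)^{\otimes 2}\cong\operatorname{Sym}^2\Omega(1)\oplus\mathcal{O}(-1)$. The fallback to Proposition \ref{prop4.2} is too unspecified to close the gap.

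\textbf{What the paper does instead.} The missing ingredient is a supply of indecomposable bundles of every rank whose cohomology is still controllable. The paper gets this from generic linear data rather than homogeneity. For $r\ge 3$ it takes $t=r-1$ and a general twisted Steiner bundle (Proposition \ref{prop4.18})
\[
0\to\mathcal{O}_{\mathbb{P}^2}(-2)^{t}\xrightarrow{\phi}\mathcal{O}_{\mathbb{P}^2}(-1)^{t+r}\to E\to 0 .
\]
This sequence gives $H^{>0}(E(n))=0$ for $n\ge 0$ and $H^0(E(m))=0$ for $m\le 0$ directly. The remaining condition is $H^1(E(m))=0$ for $m\le -3$. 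This is the surjectivity of the map $H^0(\mathcal{O}(d))^{t+r}\to H^0(\mathcal{O}(d+1))^{t}$ induced by $\phi^{T}$, where $d=-m-2\ge 1$. The paper obtains it from natural cohomology of the general $E$ (Proposition \ref{prop4.17}) together with $\chi(E(m))>0$ for $m\le -3$, which uses $r>t$. Stability of the general $E$, hence indecomposability, follows from Proposition \ref{prop4.16}, since $r\le\frac{1+\sqrt5}{2}(r-1)$ once $r\ge 3$. Ranks $1$ and $2$ are covered by $\mathcal{O}(a)$ with $|a|\le 2$ and by $T(-1)$.
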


We also identify certain ranks for which one can construct indecomposable maximal Cohen-Macaulay sheaves on $\mathcal{N}_{n+1,1}$.

\begin{theorem}[Corollary \ref{coro5.9}]
Suppose $n\geq 3$. For $r\geq n$, there is an integer $M_{r}$ such that for $k\geq M_{r}$, there exists an indecomposable maximal Cohen-Macaulay module on $\mathcal{N}_{n+1,1}$ of rank $kr$. If $n|r$, then we can set $M_{r}=1$.
\end{theorem}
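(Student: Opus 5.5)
The plan follows the criterion of Proposition \ref{prop3.14}: if $\mathcal{F}$ is a vector bundle on $T^*\mathbb{P}^n$ with $R^i\pi_*\mathcal{F}=0$ and $R^i\pi_*\mathcal{F}^\vee=0$ for all $i>0$, then $\pi_*\mathcal{F}$ is maximal Cohen-Macaulay on $\mathcal{N}_{n+1,1}$. Let $p\colon T^*\mathbb{P}^n\to\mathbb{P}^n$ be the projection. Since $\pi$ is proper and $\mathcal{N}_{n+1,1}$ is affine, these vanishings reduce to
\[
H^i(T^*\mathbb{P}^n,p^*E)=\bigoplus_{m\geq 0}H^i(\mathbb{P}^n,E\otimes S^mT_{\mathbb{P}^n})=0 \quad (i>0)
\]
for $E$ and for $E^\vee$. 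Here $p_*\widetilde{\mathcal{O}}=\bigoplus_m S^m T_{\mathbb{P}^n}$. So I will take $\mathcal{F}=p^*E$ for a vector bundle $E$ on $\mathbb{P}^n$ of rank $kr$. Indecomposability of $\pi_*\mathcal{F}$ has to be checked separately. Because $\pi$ is an isomorphism off a codimension $\geq 2$ locus and both sheaves are reflexive, $\operatorname{End}(\pi_*\mathcal{F})=\operatorname{End}(\mathcal{F})$. This equals $\bigoplus_m H^0(\mathbb{P}^n,\mathcal{E}nd(E)\otimes S^mT)$, a graded ring whose degree-zero part is $\operatorname{End}(E)$. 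An idempotent can be taken homogeneous of degree zero after a standard grading argument, so $\pi_*\mathcal{F}$ is indecomposable whenever $E$ is simple, or at least indecomposable.

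Next I need the cohomology vanishing on $\mathbb{P}^n$. Use the Euler sequence $0\to\mathcal{O}\to\mathcal{O}(1)^{n+1}\to T\to 0$. Taking symmetric powers gives resolutions
\[
0\to S^{m-1}(\mathcal{O}(1)^{n+1})\to S^m(\mathcal{O}(1)^{n+1})\to S^mT\to 0,
\]
so $S^mT$ has a two-term resolution by sums of $\mathcal{O}(m-1)$ and $\mathcal{O}(m)$. Hence $H^i(E\otimes S^mT)=0$ for all $i>0$ and $m\ge0$ provided $H^i(E(j))=0$ for all $i>0$ and $j\ge -1$, and likewise for $E^\vee$. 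In the degree-$(m-1)$ term the index shifts by one, so I actually need $H^{i}(E(j))=0$ for $i\geq 1$, $j\geq -1$, plus $H^{i+1}(E(j))=0$ for $j\ge -1$, which is the same condition. So I seek indecomposable (ideally simple) bundles $E$ with $H^{>0}(E(j))=H^{>0}(E^\vee(j))=0$ for $j\ge -1$. Twisting $E$ by $\mathcal{O}(a)$ moves the range for $E$ and $E^\vee$ in opposite directions. The condition is therefore a two-sided window of vanishing, roughly Castelnuovo–Mumford type on both sides.

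For the construction, start from a basic bundle and increase rank by iterated nonsplit extensions or by general elements of a moduli space. The natural seed of rank $n$ is $\Omega^1(1)$ or $T(-1)$: by Bott's formula $H^i(\Omega^1(j+1))$ vanishes except $H^1(\Omega^1)$. That single nonvanishing $H^1(\Omega^1)=\mathbb{C}$ at $j=-1$ is exactly the obstruction to check, so a twist or a careful choice must be made. This is where the multiples $n\mid r$, with $M_r=1$, should come from. For general $r\ge n$, I would take $E$ to be a general stable bundle of rank $kr$ given as a kernel or cokernel of a general map $\mathcal{O}(a)^{\alpha}\to\mathcal{O}(b)^{\beta}$, a Steiner-type bundle. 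For $k$ large, such general maps give simple bundles by a Brambilla-type genericity or stability argument. The vanishing is read off from the two-term resolution, which is where the threshold $M_r$ enters. For $n\mid r$, take direct sums of copies of $T(-1)$ glued by nonsplit extensions to obtain indecomposable bundles of rank $kr$ directly.

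The main obstacle is simultaneously controlling vanishing for $E$ and $E^\vee$ across the whole window while keeping $E$ indecomposable. Steiner bundles satisfy the vanishing on one side easily, but their duals have sections and higher cohomology issues in low twists. I would first try bundles built from $\Omega^p(p)$ pieces via Beilinson's monad, using Bott vanishing for both $\Omega^p(p)$ and its dual $\Omega^{n-p}(n-p-1)$. I would then verify indecomposability through simplicity of the general monad cohomology.
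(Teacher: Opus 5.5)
\textbf{Where your proposal agrees with the paper.} Your reduction is the paper's: Proposition \ref{prop3.14} applied to $\mathcal{F}=f^*E$, the identification $f_*\widetilde{\mathcal{O}}=\bigoplus_m S^mT_{\mathbb{P}^n}$, and the Euler sequence. Your indecomposability argument is a valid alternative to Proposition \ref{prop4.6}. In the nonnegatively graded ring $\operatorname{End}(f^*E)=\bigoplus_m H^0(\mathcal{E}nd(E)\otimes S^mT)$, the degree-zero component of an idempotent is an idempotent of the local ring $\operatorname{End}(E)$, and an idempotent lying in positive degrees is zero.

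\textbf{Off-by-one error.} The Euler-sequence step is off by one. For $m\ge 1$ the kernel of $S^m(\mathcal{O}(1)^{n+1})\to S^mT$ is a sum of copies of $\mathcal{O}(m-1)$ with $m-1\ge 0$, and for $m=0$ there is no kernel. So the correct condition is $H^{>0}(E(j))=H^{>0}(E^\vee(j))=0$ for $j\ge 0$; it is also necessary, by the filtration argument of Proposition \ref{prop4.4}. Equivalently, $H^{>0}(E(j))=0$ for $j\ge 0$ and $H^{<n}(E(j))=0$ for $j\le -n-1$. Your window $j\ge -1$ is strictly stronger. It is what makes $\Omega^1(1)$ look obstructed, although $\Omega^1(1)$ and $T(-1)$ satisfy the correct condition. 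It also excludes the paper's bundles. Take a Steiner bundle $V$, given by $0\to\mathcal{O}(-1)^{kt}\to\mathcal{O}^{k(t+r)}\to V\to 0$. Because $H^{n-1}(V(-n))\neq 0$, your condition forces the twist $E=V(l)$ to have $l\ge -n+2$. The dual half then requires $H^{<n}(V(i))=0$ for all $i\le l-n$, in particular $h^{n-1}(V(-2n+2))=0$. But there $h^n-h^{n-1}$ is a positive multiple of $r(n-2)-tn$, which equals $n-2r<0$ for $t=r-1$.

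\textbf{The main gap.} The construction is never carried out, and the step you expect to ``read off from the two-term resolution'' cannot be read off. The resolution only gives $H^{n-1}(V(i))^*\cong\operatorname{coker}\bigl(H^0(\mathcal{O}(-i-n-1))^{k(t+r)}\to H^0(\mathcal{O}(-i-n))^{kt}\bigr)$. Whether a general matrix of linear forms makes this map of maximal rank is exactly the natural-cohomology statement the paper imports from \cite{coskun2025stability}:
\begin{enumerate}
\item Theorem 2.11 there gives natural cohomology once $k$ exceeds an explicit bound. This, not stability, is the source of $M_r$.
\item Corollary 4.3 there gives it for every $t$ when the rank is a multiple of $n$. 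This is why $M_r=1$ when $n\mid r$.
\end{enumerate}
Granting natural cohomology, for $i\le -2n$ the count $(t+r)\binom{-i-1}{n}>t\binom{-i}{n}$ holds as soon as $r>t$. It forces $H^n(V(i))\ne 0$, and hence $H^{<n}(V(i))=0$. One then takes $E=V(-n+1)$ and $t=r-1$. Local freeness comes from $r\ge n$, and stability from the ratio bound in Proposition \ref{prop5.5}, which does not depend on $k$. You name this dual-side vanishing as the main obstacle and leave it open, and the Beilinson-monad route is not developed.

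\textbf{The case $n\mid r$.} Your plan here also fails as stated. Nonsplit extensions among copies of $T(-1)$ do not exist, since $\operatorname{Ext}^1(T(-1),T(-1))=H^1(T\otimes\Omega^1)=0$ by the Euler sequence and Bott's formula.
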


\paragraph{\textbf{Conventions.}}
Throughout this paper, we work over the field $\mathbb{C}$. All varieties are assumed to be irreducible and quasi-projective unless otherwise stated. For a coherent sheaf $\mathcal{F}$ on a normal variety, we write $\mathcal{F}^{\vee}:=\mathcal{H}om(\mathcal{F},\mathcal{O}_X)$ for its dual. We use $(-)^{\vee\vee}$ to denote the reflexive hull. If $\pi\colon Y\to X$ is a morphism, then $R^i\pi_*$ denotes the $i$-th higher direct image functor.

\ 

\paragraph{\textbf{Acknowledgments.}} I would like to thank David Eisenbud for many valuable comments. Thanks also Peisheng Yu, Wenqing Wei, Jerry Yang, and Cameron Chang for helpful conversations.

\section{Symplectic Singularities, Nakajima Quiver varieties}\label{S2}
In this section, we explain the definitions and basic properties of the varieties and singularities that we are going to work on.
\subsection{Symplectic singularities}
We begin by introducing the definition of symplectic singularities. We use the definition in \cite{Beauville2000SymplecticSingularities}.
\begin{definition}[Symplectic singularity]
A variety $X$ has a \emph{symplectic singularity} at a point $p \in X$ if 
there exists an open neighborhood $U \subset X$ of $p$ such that:

\begin{enumerate}
    \item[(a)] $U$ is normal;
    
    \item[(b)] The smooth locus $U_{\mathrm{reg}}$ admits an algebraic
 symplectic $2$-form $\varphi$;
    
    \item[(c)] For any resolution of singularities 
    $f : \widetilde{U} \to U$, the pullback 
    $f^{*}\varphi$ defined on $f^{-1}(U_{\mathrm{reg}})$ extends to an
 algebraic $2$-form on $\widetilde{U}$.
\end{enumerate}
\end{definition}

We say $X$ has symplectic singularities if all its singular points are symplectic singularities, and we say $\pi:\widetilde{X}\rightarrow X$ is a symplectic resolution if $X$ has rational singularities and $\widetilde{X}$ is a (simultaneous) crepant resolution of $X$, which means $\widetilde{X}$ is smooth and the pullback of dualizing sheaf on $X$ is the dualizing sheaf on $\widetilde{X}$, namely $\pi^*\omega_X=\omega_{\widetilde{X}}$. Throughout this section, we suppose $X$ is an affine variety with symplectic singularities, and $\pi:\widetilde{X}\rightarrow X$ is a symplectic resolution.

A useful result in \cite{Beauville2000SymplecticSingularities} is:
\begin{lemma}\label{lem2.2}
    A symplectic singularity is rational Gorenstein.
\end{lemma}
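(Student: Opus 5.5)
The lemma has two parts: rationality and the Gorenstein property. I will prove rationality first and then derive Gorensteinness from it, working locally near a singular point $p$. So let $U$ be a normal neighbourhood carrying the form $\varphi$ on $U_{\mathrm{reg}}$, and let $f:\widetilde U\to U$ be a resolution, which exists by Hironaka. Let $2m=\dim U$.

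\textbf{Step 1: $U$ is canonical with trivial canonical class.}
The top power $\varphi^{m}$ is a nowhere vanishing section of $\omega_{U_{\mathrm{reg}}}$. Hence $\omega_{U_{\mathrm{reg}}}\cong\mathcal{O}_{U_{\mathrm{reg}}}$. Since $U$ is normal, the singular locus has codimension at least $2$, so the reflexive sheaf $\omega_U$ is also trivial. In particular $K_U$ is Cartier and $U$ is $\mathbb{Q}$-Gorenstein of index one.

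By condition (c), $f^*\varphi$ extends to a regular $2$-form $\tilde\varphi$ on $\widetilde U$. Then $\tilde\varphi^{m}$ is a regular section of $\omega_{\widetilde U}$ that agrees with $f^*(\varphi^m)$ off the exceptional locus. Writing $K_{\widetilde U}=f^*K_U+\sum a_iE_i$, the divisor of $\tilde\varphi^m$ is exactly $\sum a_iE_i$. Since $\tilde\varphi^m$ is regular, this divisor is effective, so every $a_i\ge 0$. Thus $U$ has canonical singularities, with Gorenstein index one.

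\textbf{Step 2: canonical implies rational.}
Apply the theorem of Elkik (equivalently Kawamata--Viehweg vanishing in relative form) that canonical singularities, indeed log terminal ones, are rational. In outline, one shows $R^if_*\mathcal{O}_{\widetilde U}=0$ for $i>0$ as follows. Grauert--Riemenschneider vanishing gives $R^if_*\omega_{\widetilde U}=0$ for $i>0$. Since $\omega_{\widetilde U}=f^*\omega_U\otimes\mathcal{O}(\sum a_iE_i)$ with $a_i\ge0$ and $\omega_U$ is trivial, one compares $\mathcal{O}_{\widetilde U}$ with $\omega_{\widetilde U}$ via the standard argument. Alternatively, I will simply cite Elkik/Kollár--Mori 5.22. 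I expect this to be the only nontrivial input, and I will cite it rather than reprove it.

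\textbf{Step 3: rational implies Gorenstein.}
Rational singularities are Cohen--Macaulay, for instance by Kempf's criterion. A Cohen--Macaulay variety whose dualizing sheaf $\omega_U$ is invertible is Gorenstein. By Step 1, $\omega_U\cong\mathcal{O}_U$, so $U$ is Gorenstein, indeed with trivial dualizing sheaf. This finishes the proof.

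The main obstacle is Step 2. Steps 1 and 3 are formal once the discrepancy computation is in place, while Step 2 requires a genuine vanishing theorem.
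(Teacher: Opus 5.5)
Your proof is correct and is essentially the argument in Beauville's paper, which the paper cites for this lemma without reproducing it: $\varphi^{m}$ trivializes $\omega_U$, the extension of $f^*\varphi$ forces nonnegative discrepancies so $U$ is canonical of index one, Elkik gives rationality, and Cohen--Macaulayness plus an invertible (indeed trivial) dualizing sheaf gives Gorenstein. The only point worth stating explicitly is that canonicity needs $a(E)\ge 0$ for every exceptional divisor over $U$. You get this because condition (c) is imposed on every resolution, or equivalently by checking a single log resolution.
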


Therefore, by the properties of Gorenstein rings, we see that $\mathcal{O}_{X}$ is the dualizing sheaf on $X$, and by the definition of rational singularities: $\mathbf{R}\pi_{*}\mathcal{O}_{\widetilde{{X}}}=\mathcal{O}_{X}$, or equivalently, $R^{>0}\pi_{*}\mathcal{O}_{\widetilde{X}}=0$, the natural map $\mathcal{O}_{X}\rightarrow\mathcal{O}_{\widetilde{X}}$ is an isomorphism. Since $X$ is supposed to be affine, this is also equivalent to saying $H^{>0}(\mathcal{O}_{\widetilde{X}})=0$ and $H^{0}(\mathcal{O}_{\widetilde{X}})=H^{0}(\mathcal{O}_{{X}})$.

\begin{example}[Kleinian surfaces as symplectic resolutions]
Suppose $X= \mathbb{C}^2/G$ where $G$ is a finite group in $SL(2,\mathbb{C})=Sp(2,\mathbb{C)}$. Then $X$ inherits a symplectic form from $\mathbb{C}^2$, and there is one isolated singular point on $X$ that is a symplectic singularity. These kinds of singularities are known as \emph{Rational double points}, \emph{ADE singularities} , or \emph{Kleinian singularities}. In \cite{ItoNakamura1996}, a canonical resolution is given as the $G$-Hilbert scheme: There is a projective morphism
$$
G\text{-Hilb}(\mathbb{C}^2)\rightarrow\mathbb{C}^2/G
$$
extending an isomorphism $G\text{-Hilb}((\mathbb{C}^2)^{reg})\rightarrow(\mathbb{C}^2)^{reg}/G$, which is a minimal resolution of symplectic singularities.
While early in \cite{Nakajima1994Instantons}, a symplectic resolution was given by Nakajima quiver varieties. These two resolutions are identified in \cite{CrawleyBoevey2001ExceptionalFibres}.
\end{example}
\begin{example}[Springer resolution as a symplectic resolution]
Let $G$ be a connected complex semisimple algebraic group with Lie algebra $\mathfrak{g}$, and let $\mathcal{N}\subset\mathfrak{g}$ be the nilpotent cone. Let $\mathcal{B}$ be the flag variety of $G$. The cotangent bundle $T^*\mathcal{B}$ carries its canonical symplectic form and admits a proper $G$-equivariant morphism
\[
\pi:T^*\mathcal{B}\to\mathcal{N},
\qquad
(x,\mathfrak{b})\mapsto x,
\]
where
\[
T^*\mathcal{B}=\{(x,\mathfrak{b})\in\mathfrak{g}\times\mathcal{B}\mid x\in\mathfrak{b},\; x\text{ nilpotent}\}.
\]
The map $\pi$ is proper, birational, and an isomorphism over the regular nilpotent orbit. Hence, $\pi$ is a resolution of singularities of $\mathcal{N}$. Moreover, the symplectic form on $T^*\mathcal{B}$ restricts to the Kostant--Kirillov form on smooth nilpotent orbits, so $\mathcal{N}$ has symplectic singularities in the sense of Beauville, and $\pi$ is a symplectic resolution.
\end{example}

\subsection{Nakajima Quiver Varieties}
Now we introduce an important family of symplectic resolutions. Those are called Nakajima quiver varieties. While this family includes almost all examples above (as we will see, the Kleinian surfaces and A type springer resolutions are Nakajima quiver varieties), it also provides other concrete examples. For example, the cycle map from the punctual Hilbert scheme of $\mathbb{C}^2$ to the symmetric product of $\mathbb{C}^2$: $\text{Hilb}^n(\mathbb{C}^{2})\rightarrow(\mathbb{C}^{2})^{(n)}$ is a symplectic resolution and can be realized as Nakajima quiver varieties. Our introduction basically follows \cite{Ginzburg2010Nakajima} but much briefer than that.

Fix a quiver $Q$ with vertex set $V={1,2,3,...,n}$ and edge set $E$. While fixing a dimension vector $v=(v_1,v_2,...,v_n)$ and assigning vertex i a dimension $v_i$, one can form a $\mathbb{C}$-vector space consisting of all representations of $Q$ with the action of an algebraic group of automorphisms at the vertices.
$$\text{Rep}(Q,v)=\prod_{e\in E}\text{Hom}(\mathbb{C}^{v_{h(e)}},\mathbb{C}^{v_{t(e)}}),\qquad GL(v)=\prod_{i=1}^{n}GL(v_i,\mathbb{C})
$$
Here $t(e)$ and $h(e)$ are the tail and head of the edge $e$. $GL(v)$ acts on $\text{Rep}(Q,v)$ by simultaneous conjugation
$$(g_1,g_2,...,g_n):(M_{e})_{e\in E}\mapsto (g_{h(e)}\cdot M_{e}\cdot g_{t(e)}^{-1})_{e\in E}
$$
Then you can form a quotient variety, either a categorical quotient or a GIT quotient with respect to some character $\theta$. Let's define:
$$\mathcal{M}^{\theta}(v)=\text{Rep}(Q,v)^{\theta\text{-ss}}/\!/\!_{\theta}\,GL(v),\quad\quad\mathcal{M}^{0}(v)=\text{Rep}(Q,v)/\!/GL(v)
$$
Here $\theta\text{-ss}$ refers to the semistable points with respect to the character $\theta$. These are called \emph{quiver varieties} of $Q$ with dimension vector $v$. There is a projective map $\mathcal{M}^{\theta}(v)\rightarrow\mathcal{M}^{0}(v)$, and under good stability conditions, this gives you a resolution of singularities.

Nakajima's idea in \cite{Nakajima1994Instantons} of constructing quiver varieties contains two additional steps. Firstly, we choose a framing on quiver $Q$; that is, we form a new quiver $Q^{\heartsuit}$ with vertex set $I\sqcup I'$, equipped with a bijection $ I\xrightarrow{\sim} I',\  i\mapsto i'$, and with the edge set equal to the disjoint union of edges in $Q$ connecting points in $I$ and a set of additional edges pointing from $i$ to $i'$ for each $i\in I$. Then, by fixing the dimension vector $w=(w_1,w_2,...,w_n)$ on those framing vertices, we can again take the $\mathbb{C}$ vector space $\text{Rep}(Q^{\heartsuit},v,w)$. While both $GL(v)$ and $GL(w)$ are acting on this space, we only care about the $GL(v)$-action when taking the quotient.

Instead of taking the quotient directly, we do \emph{Hamiltonian reduction} to $\text{Rep}(Q^{\heartsuit},v,w)$: Take its cotangent bundle $T^*\text{Rep}(Q^{\heartsuit},v,w)$ and identify its points by quadruples $(x,y,i,j)$, where $y,j$ are cotangent vectors. One can show that there is a $GL(v)$-equivariant momentum map $\mu: T^*\text{Rep}(Q^{\heartsuit},v,w)\rightarrow\mathfrak{gl}(v)^*$ defined by
$$\mu(x_e,x_{e^*},i_{k},j_{k})\mapsto\sum_{e\in E}(x_ex_{e^*}-x_{e^*}x_e)-\sum_{i=1}^{n}j_ki_k.
$$
Then fix any $GL(v)$-invariant character $\lambda\in(\mathfrak{gl}(v)^*)^{GL(v)}$. The fiber of $\lambda$ has a $GL(v)$ action, and one can again define the two kinds of quotients:
$$
\mathcal{M}^{\theta}_\lambda(v,w)=\mu^{-1}(0)^{\theta\text{-ss}}/\!/\!_{\theta}\,GL(v),\quad\quad \mathcal{M}^{0}_\lambda(v,w)=\mu^{-1}(0)/\!/GL(v)
$$
Again, there is a projective map $\mathcal{M}^{\theta}_\lambda(v,w)\rightarrow\mathcal{M}^{0}_\lambda(v,w)$. Those varieties play
the role of 'twisted cotangent bundles' on $X/G$, cf. \cite{ChrissGinzburg1997}. The importance of those varieties in this paper is that they form a rich family of symplectic resolutions, as shown in the following proposition, cf. \cite{Nakajima1994Instantons} and \cite{Nakajima1998QuiverKacMoody}.
\begin{proposition}
For fixed $v,w$ and a generic parameter $\theta$.
\begin{enumerate}
    \item $\mathcal{M}^{\theta}_\lambda(v,w)$ are smooth; they are also connected for generic $\lambda$.
    \item  For different generic parameters $(\lambda,\theta)$, $\mathcal{M}^{\theta}_\lambda(v,w)$ are diffeomorphic to each other.
    \item The map $\pi: \mathcal{M}^{\theta}_\lambda(v,w)\rightarrow\mathcal{M}^{0}_\lambda(v,w)$ is a symplectic resolution. 
\end{enumerate}
\end{proposition}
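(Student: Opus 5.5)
The three claims are standard results of Nakajima, so the plan is to reduce each to a few structural facts about the moment map and the GIT quotient rather than re-derive everything. Throughout write $M=T^*\text{Rep}(Q^{\heartsuit},v,w)$, a symplectic vector space carrying a Hamiltonian $GL(v)$-action with moment map $\mu$.

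\emph{Step 1: smoothness.} For generic $\theta$, every $\theta$-semistable point of $\mu^{-1}(\lambda)$ is $\theta$-stable. I would show that stable points have trivial stabilizer in $GL(v)$. The framing matters here: the scalars act nontrivially on the $i,j$ components, so the generic stabilizer is trivial and not just $\mathbb{C}^*$. By the standard identity $\operatorname{im}(d\mu_x)=\mathfrak{stab}(x)^{\perp}$, the differential $d\mu$ is then surjective at stable points. So $\mu^{-1}(\lambda)^{s}$ is smooth, $GL(v)$ acts freely on it, and by Luna's slice theorem the quotient $\mathcal{M}^\theta_\lambda(v,w)$ is a smooth geometric quotient of dimension $\dim M-2\dim GL(v)$. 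The symplectic form descends by Marsden--Weinstein reduction. For connectedness with generic $\lambda$, the plan is to invoke Crawley-Boevey's description of $\mu^{-1}(\lambda)$: it is irreducible when $v$ is a suitable positive root datum, so its open GIT quotient is connected. I expect this to be the most delicate point, and I would simply cite it.

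\emph{Step 2: diffeomorphism type.} I would use the hyperk\"ahler description: $\mathcal{M}^\theta_\lambda(v,w)$ is the hyperk\"ahler quotient at the level $\zeta=(\theta,\lambda)\in\mathbb{R}^3\otimes\mathfrak{z}$, by the Kempf--Ness correspondence. The generic $\zeta$ form the complement of finitely many codimension-3 subspaces (walls indexed by roots), and this complement is connected. Over it, the family of hyperk\"ahler quotients is a proper submersion, so Ehresmann's theorem gives that all fibers are diffeomorphic.

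\emph{Step 3: symplectic resolution.} The map $\pi$ is projective, since it is a GIT map relative to the affine quotient. It is birational onto its image when $\mathcal{M}^0_\lambda$ contains a dense open set of free closed orbits; there $\pi$ is an isomorphism. The target is normal, as an invariant ring of a normal variety, or via Crawley-Boevey's normality theorem. Since $\mathcal{M}^\theta_\lambda$ is holomorphic symplectic, its canonical bundle is trivial, so $\pi$ is crepant. By Lemma \ref{lem2.2}-type reasoning (Beauville), the target then has symplectic, hence rational Gorenstein, singularities. Surjectivity, i.e. that the image is all of $\mathcal{M}^0_\lambda$, is the main obstacle; it requires the semismall/dimension estimates of Nakajima, and I would cite \cite{Nakajima1998QuiverKacMoody} for it.
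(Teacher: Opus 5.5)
The paper gives no argument for this proposition beyond citing Nakajima. Your outline is more detailed than that, but two of its steps fail as written.

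\textbf{Step 2 (diffeomorphism type).} The family $\bigcup_{\zeta}\mathcal{M}_\zeta\to\{\zeta \text{ generic}\}$ is a surjective submersion, but it is \emph{not} proper. Its fibres are non-compact; already in Section 4 the fibre is $T^*\mathbb{P}^2$. So Ehresmann's theorem does not apply, and a non-proper submersion need not be locally trivial. Connectedness of the complement of the codimension-$3$ walls is the right first ingredient, but you still need a substitute for properness. Two possible substitutes:
\begin{enumerate}
\item use that the family is proper over the corresponding family of affine quotients, together with the contracting $\mathbb{C}^\times$-action (a Slodowy-type topological triviality argument);
\item find a proper exhaustion function whose fibrewise gradient you can control uniformly near infinity.
\end{enumerate}
Without some such argument, Step 2 is not established.

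\textbf{Step 3 (symplectic resolution).} You cannot cite Nakajima for surjectivity, because without an extra hypothesis it is false, and so is birationality. The paper's own example of one vertex with no arrows is a counterexample. Take $v=3$, $w=4$, $\lambda=0$, $\theta>0$. Then $\mathcal{M}^\theta_0(3,4)=T^*\mathrm{Gr}(3,4)$ has dimension $6$. For a semistable $(i,j)$, $i$ is injective and $j$ kills $i(V)$, so $ij$ has rank $\leq 1$; hence the image of $\pi$ is $\mathcal{N}_{4,1}$. But $\mathcal{M}^0_0(3,4)$ is the $8$-dimensional variety of square-zero $4\times 4$ matrices of rank $\leq 2$.

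What Nakajima proves is the following. $\pi$ is an isomorphism over the locus $\mathcal{M}^{0,\mathrm{reg}}_\lambda(v,w)$ of free closed orbits. It is a resolution of $\mathcal{M}^0_\lambda(v,w)$ when that locus is nonempty: then the locus is dense, since $\mathcal{M}^0_\lambda$ is irreducible by Crawley-Boevey, and the closed image of the proper map $\pi$ is everything.

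So statement (3) needs this hypothesis added, or has to be read as a statement about the image of $\pi$. Your crepancy argument and your appeal to Beauville depend on it too, since $\pi^*\omega_X=\omega_{\widetilde{X}}$ only makes sense for $\pi$ birational onto a normal target.

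Normality also does not follow from $\mathcal{M}^0_\lambda$ being an invariant-theoretic quotient of a normal variety. $\mu^{-1}(0)$ can be reducible: for one vertex with $v=w=1$ it is $\{ij=0\}\subset\mathbb{C}^2$. Crawley-Boevey's normality theorem, which you mention as an alternative, is the correct input.

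Step 1 and your connectedness argument (Crawley-Boevey's irreducibility of $\mu^{-1}(\lambda)$ for generic $\lambda$) are fine as citations.
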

There is a more concrete interpretation as "$v$-regularity" for what is "generic" in \cite{Nakajima1994Instantons}, Theorem 2.8. However, in this paper, we are more interested in the concrete models for symplectic resolution generated by this method, so we omit the discussion here.

Before giving some examples, we provide several useful results about this resolution of Nakajima quiver varieties. From now on, we always suppose that the parameters $(\lambda,\theta)$ are generic.

Consider the $\mathbb{C}^\times$ action on $\text{Rep}(Q^{\heartsuit},v,w)$ contracting cotangent fibers. This action preserves $\mu^{-1}(0)$ and commutes with the $GL(v)$ action, so it induces an action on $\mathcal{M}^{\theta}_0(v,w)$, and the map $\pi: \mathcal{M}^{\theta}_0(v,w)\rightarrow\mathcal{M}^{0}_0(v,w)$ is $\mathbb{C}^\times$-equivariant. 

Define $\Lambda_\theta(v,w)=[\pi^{-1}(\mathcal{M}^{0}_0(v,w)^{\mathbb{C}^\times}]_{red}$. Then one has the following proposition:
\begin{proposition}
For a generic parameter $(0,\theta)$, we have:

\begin{enumerate}
\item Each irreducible component of the variety 
$\Lambda_{\theta}(v,w)$ is a Lagrangian subvariety of 
$\mathcal{M}_0^\theta(v,w)$. In particular, each component has dimension $\frac{1}{2}\dim\mathcal{M}_0^\theta(v,w)$.

\item Assume, in addition, that $\theta_i>0$ holds for all $1\leq i\leq n$ and the quiver $Q$ has no oriented cycles. then one has $M_{0,0}(v,w)^{\mathbb{C}^{\times}}=\{0\}$; thus, in this case, we have
$$
\Lambda_{\theta}(v,w)=\pi^{-1}(0).
$$
\end{enumerate}
\end{proposition}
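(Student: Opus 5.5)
The plan is to deduce both parts from the properness and $\mathbb{C}^\times$-equivariance of $\pi:\mathcal{M}^\theta_0(v,w)\to\mathcal{M}^0_0(v,w)$, together with the symplectic structure.

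\textbf{Part (2).} This part is elementary, so I would treat it first. The ring of functions on $\mathcal{M}^0_0(v,w)$ is the ring of $GL(v)$-invariants on $\mu^{-1}(0)$. By the first fundamental theorem of invariant theory (Le Bruyn--Procesi), these invariants are generated by traces of products of the arrow maps along oriented cycles in the doubled quiver, together with expressions of the form $j\,(\text{path})\,i$ running from framing vertex to framing vertex. The $\mathbb{C}^\times$ contracting cotangent fibers scales $x_{e^*}$ and $j$ with positive weight and fixes $x_e$ and $i$.

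Now take an invariant generator. If it involves no cotangent variables, it is a trace of an oriented cycle in $Q$ alone. Since $Q$ has no oriented cycles, no such generator exists, so every generator other than the constants has positive weight. Hence the coordinate ring is nonnegatively graded with degree-$0$ part equal to $\mathbb{C}$. The fixed locus of the action on the affine variety $\mathcal{M}^0_0$ is the common zero locus of all positive-degree homogeneous functions, and this is the single point $\{0\}$. Therefore $\Lambda_\theta=\pi^{-1}(0)$. The hypothesis $\theta_i>0$ guarantees that $\pi$ is the standard projective morphism and that $0$ is a well-defined cone point.

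\textbf{Part (1).} Set $Z=\mathcal{M}^0_0(v,w)^{\mathbb{C}^\times}$ and $\mathfrak{L}=\pi^{-1}(Z)$. There are three steps.

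\emph{Isotropy.} Since $\pi$ is proper and equivariant, $\mathfrak{L}$ is exactly the set of points $x$ for which $\lim_{t\to\infty} t^{-1}\cdot x$ exists, i.e.\ the attracting set for the inverse action. The symplectic form $\omega$ has weight $1$, meaning $t^*\omega=t\omega$. Decompose $\mathfrak{L}$ by Białynicki-Birula into attracting cells of fixed components. Pick a smooth point $x$ of a component $L$ and tangent vectors $u,v\in T_xL$. Flow them toward the fixed locus, where they remain bounded. The identity $\omega(u,v)=t^{-1}\omega(t_*u,t_*v)$ then tends to $0$ as $t\to\infty$, so $\omega(u,v)=0$ and $L$ is isotropic. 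This gives $\dim L\le \tfrac12\dim\mathcal{M}$.

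\emph{Lower bound.} I would show $\dim L\ge\tfrac12\dim\mathcal{M}$ for every component. The approach is to view $\mathfrak{L}$ as the central fiber of a family: it is the zero-fiber of the proper map $\mathcal{M}^\theta_0\to Z$, and it is homotopy equivalent to $\mathcal{M}^\theta_0$ via the contraction.

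\emph{Component count.} The cleanest argument is Nakajima's. The attracting cells of fixed components $F$ are affine bundles of rank equal to the positive-weight part of the tangent space. Because the weight-$1$ symplectic form pairs the weight-$k$ and weight-$(1-k)$ pieces, each such bundle has rank exactly $\tfrac12\dim\mathcal{M}-\tfrac12\dim F+\dim F$ restricted appropriately. Equivalently, the cells in $\pi^{-1}(Z)$ are the fibers over the fixed-point retraction.

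The main obstacle is this last dimension bookkeeping, which shows that every component reaches half dimension rather than merely being isotropic. I would try Nakajima's argument (\cite{Nakajima1994Instantons}, Thm.~5.8; \cite{Nakajima1998QuiverKacMoody}). He proves that $\pi$ is semismall, with $\dim\pi^{-1}(y)\le\tfrac12(\dim\mathcal{M}-\dim S)$ along strata $S$. He also shows that $\mathfrak{L}$ is a union of conormal-type subvarieties to strata inside $Z$, each of pure half dimension. Combined with isotropy, this gives the Lagrangian property.
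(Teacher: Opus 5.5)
The paper quotes this result from Nakajima and Ginzburg without proof, so I am judging your argument on its own terms.

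\textbf{Part (2) and the isotropy step.} Part (2) is correct. With $y,j$ of weight one and $x,i$ of weight zero, every nonconstant generator of $\mathbb{C}[\mu^{-1}(0)]^{GL(v)}$ has positive weight when $Q$ has no oriented cycles. So the coordinate ring of $\mathcal{M}^0_0(v,w)$ is nonnegatively graded with degree-zero part $\mathbb{C}$, and its unique fixed point is the one cut out by the positive-degree part. The hypothesis $\theta_i>0$ is not needed here.

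The isotropy step in Part (1) is the right idea, but the direction of the limit is wrong. $\mathfrak{L}=\Lambda_\theta(v,w)=\pi^{-1}(Z)$ is the set of $x$ for which $\lim_{t\to\infty}t\cdot x$ exists. The limit $\lim_{t\to\infty}t^{-1}\cdot x$ that you wrote exists for \emph{every} $x$: $\mathcal{M}^0_0(v,w)$ is contracted onto $Z$ as $t\to 0$, and $\pi$ is proper. Your estimate $\omega(u,v)=t^{-1}\omega(t_*u,t_*v)\to 0$ only works for the $t\to\infty$ flow.

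\textbf{The gap: the lower bound $\dim L\ge\tfrac12\dim\mathcal{M}$.} You flag this as the main obstacle and then hand it off to Nakajima, but none of the tools you name supplies it.
\begin{enumerate}
\item A homotopy equivalence says nothing about the dimensions of components.
\item Semismallness of $\pi$ only bounds fiber dimensions from above.
\item The claim that $\mathfrak{L}$ is a union of ``conormal-type'' subvarieties of pure half dimension is exactly what has to be proved.
\end{enumerate}

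Your cell-dimension formula $\tfrac12\dim\mathcal{M}-\tfrac12\dim F+\dim F$ is also wrong. It is the count for an \emph{invariant} symplectic form, where weight $k$ pairs with weight $-k$. It would contradict the isotropy you just proved whenever $\dim F>0$.

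\textbf{How to close it.} Use the weight-one form you wrote down. Let $T_p^a$ denote the weight-$a$ subspace of $T_p\mathcal{M}$ at a fixed point $p\in F$. The form pairs $T_p^{a}$ with $T_p^{1-a}$, so $T_p^{\le 0}$ and $T_p^{\ge 1}$ are dual and $\dim T_p^{\le 0}=\tfrac12\dim\mathcal{M}$.

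Since $T_p^0=T_pF$, the Bia\l ynicki-Birula cell $W_F=\{x:\lim_{t\to\infty}t\cdot x\in F\}$ is an affine bundle over $F$ with fiber $T_p^{<0}$. Hence every cell has dimension exactly $\tfrac12\dim\mathcal{M}$. Each cell is isotropic, because its weights are all $\le 0$ and so never sum to $1$.

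$\mathfrak{L}$ is the finite disjoint union of the irreducible cells $W_F$, one for each connected component $F$ of $(\mathcal{M}^\theta_0)^{\mathbb{C}^\times}$. These all lie in $\mathfrak{L}$ by equivariance and all have the same dimension. So every irreducible component of $\mathfrak{L}$ is some $\overline{W_F}$, and is therefore Lagrangian.

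This weight-one count is the reason for using the action that scales only the cotangent directions. With it, no appeal to semismallness is needed.
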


Indeed, the $\mathbb{C}^\times$-action provides a contraction of the variety $\mathcal{M}_0^0(v,w)$
to $\mathcal{M}_0^0(v,w)^{\mathbb{C}^\times}$ and a contraction of the variety $\mathcal{M}_0^\theta(v,w)$
to $\mathcal{M}_0^\theta(v,w)^{\mathbb{C}^\times}$. Under the generic condition in the proposition, this gives a contraction of $\mathcal{M}_0^0(v,w)$ to the point $0$, and \cite{Nakajima1994Instantons} Corollary 5.5 tells you the following result:
\begin{proposition}
For generic $\theta$, $\mathcal{M}_0^\theta(v,w)$ is homotopy equivalent to $\pi^{-1}(0)$.
\end{proposition}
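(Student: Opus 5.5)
We want to show that $\mathcal{M}_0^\theta(v,w)$ deformation retracts, up to homotopy, onto the central fiber $\pi^{-1}(0)$. The tool is the $\mathbb{C}^\times$-action described above, restricted to $\mathbb{R}_{>0}\subset\mathbb{C}^\times$, which serves as a flow.

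First, fix the action and its weights. The action scales the cotangent coordinates $(y,j)$ by $t$. Choose the weights so that every $GL(v)$-invariant generator of the coordinate ring of $\mathcal{M}_0^0(v,w)$ has strictly positive weight. If necessary, twist by the action scaling all of $(x,y,i,j)$, which also preserves $\mu^{-1}(0)$ since $\mu$ is homogeneous. Under the hypotheses that $Q$ has no oriented cycles and $\theta_i>0$, the invariants are traces of cycles in the doubled framed quiver. Each such cycle must use some cotangent arrow, so every generator has positive weight. Therefore $\lim_{t\to 0}t\cdot x=0$ for every $x\in\mathcal{M}_0^0(v,w)$, and $0$ is the unique fixed point, in agreement with the previous proposition.

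Next, lift the limits to the resolution. Because $\pi$ is projective and $\mathbb{C}^\times$-equivariant, for any $z\in\mathcal{M}_0^\theta(v,w)$ the orbit map $t\mapsto t\cdot z$ extends across $t=0$ by the valuative criterion of properness, applied over the extension $t\mapsto t\cdot\pi(z)$ on the base. So $\lim_{t\to0}t\cdot z$ exists and lies in $\pi^{-1}(0)$. Choose an equivariant embedding $\mathcal{M}_0^0(v,w)\hookrightarrow\mathbb{C}^N$ with positive weights, and set $\rho(x)=\sum|x_k|^{2/w_k}$. Then $\rho\circ\pi$ is a proper exhaustion function that is strictly decreasing along the $\mathbb{R}_{>0}$-flow away from $\pi^{-1}(0)$.

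Finally, build the retraction, which is the main obstacle. The pointwise limit map $z\mapsto\lim_{t\to0}t\cdot z$ need not be continuous, since it jumps between Białynicki-Birula strata. I would therefore not use it directly. Instead:
\begin{enumerate}
\item Show that $\pi^{-1}(0)$, a compact algebraic set, has a neighborhood basis of sets that deformation retract onto it, by Łojasiewicz triangulation of the pair $(\mathcal{M}_0^\theta,\pi^{-1}(0))$.
\item Take $U=\{\rho\circ\pi<\epsilon\}$ inside such a retracting neighborhood.
\item Use the flow to deformation retract all of $\mathcal{M}_0^\theta(v,w)$ onto $\overline{U}$: flow each point by $t\in(0,1]$ until $\rho\circ\pi$ reaches $\epsilon$. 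The stopping time is continuous because $\rho$ is strictly monotone along orbits and homogeneous, $\rho(t\cdot x)=t^{2}\rho(x)$.
\end{enumerate}
Composing the two retractions gives the homotopy equivalence $\mathcal{M}_0^\theta(v,w)\simeq\pi^{-1}(0)$.

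The delicate point is checking that $\rho$ is transversal to the flow, so that the stopping time is continuous. The homogeneity of $\rho$ makes this immediate once positive weights are secured, which is why the first step needs care.
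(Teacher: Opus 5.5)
Your proposal is correct and follows the same route as the paper: a $\mathbb{C}^\times$-action with positive weights on the invariants contracts $\mathcal{M}_0^0(v,w)$ to $0$, and properness of $\pi$ transfers this to a retraction onto $\pi^{-1}(0)$. Your fallback of scaling all of $(x,y,i,j)$ even removes the no-oriented-cycles hypothesis that the paper tacitly borrows from the preceding proposition. The paper only sketches the argument and cites Nakajima's Corollary 5.5 for the retraction itself, which your argument supplies: since $\{\rho\circ\pi\le\epsilon\}$ lies inside a neighborhood retracting onto the compact fiber, composing the explicit flow retraction (stopping time $\min\{1,\sqrt{\epsilon/\rho(\pi(z))}\}$) with that neighborhood retraction is legitimate.
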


Now we include a stability criterion to help us concretely compute the GIT quotient. The following proposition is derived from results in \cite{King1994}, while one can also find an alternative proof in \cite{Nakajima1994Instantons}.

\begin{proposition}
A quadruple $(x,y,i,j)\in \mu^{-1}(\lambda)$ is $\theta$-semistable 
if and only if the following holds: For any collection of vector subspaces, 
$S=(S_i)_{1\le i\le n}\subset V=(V_i)_{1\le i\le n}$ 
which is stable under the maps $x$ and $y$, we have
\begin{align}\notag
S_i \subset \ker j_i,\ \forall\, 1\le i\le n
&\;\Longrightarrow\;
\theta\cdot(\dim S_i)_{i=1}^n\le 0, \\\notag
S_i \supset \operatorname{Im} i_i,\ \forall\, 1\le i\le n
&\;\Longrightarrow\;
\theta\cdot(\dim S_i)_{i=1}^n\le \theta\cdot(\dim S_i)_{i=1}^n. 
\end{align}
\end{proposition}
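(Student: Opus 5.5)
The plan is to reduce the statement to King's criterion \cite{King1994} for \emph{unframed} quiver representations, using Crawley-Boevey's trick of absorbing the framing into an extra vertex. Before doing so, I will correct what is evidently a typo in the statement: the right-hand side of the second inequality should read $\theta\cdot(\dim V_i)_{i=1}^n$, not $\theta\cdot(\dim S_i)_{i=1}^n$, since otherwise that condition is vacuous.

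\emph{Step 1 (reduction to an unframed quiver).} Form the quiver $Q_\infty$ from the doubled quiver of $Q$ by adding one vertex $\infty$. For each $k$, add $w_k$ arrows $\infty\to k$ and $w_k$ arrows $k\to\infty$. Assign dimension $1$ at $\infty$. Choosing bases of the $W_k$, a quadruple $(x,y,i,j)$ becomes a representation of $Q_\infty$ of dimension vector $(v,1)$: the maps $i_k\colon W_k\to V_k$ give the arrows out of $\infty$, and the maps $j_k\colon V_k\to W_k$ give the arrows into $\infty$. The group $GL(v)\times\mathbb{C}^\times$ acts, and its diagonal scalars act trivially. Hence $GL(v)$-orbits and $GL(v)$-semi-invariants of weight $\chi_\theta^m$ agree with those of $GL(v)\times\mathbb{C}^\times$ for the extended character $\theta_\infty=-\theta\cdot v$, which is chosen so that $\theta'\cdot(v,1)=0$. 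Since $\mu^{-1}(\lambda)$ is a closed $GL(v)$-stable subvariety, semistability of a point of $\mu^{-1}(\lambda)$ is the same as semistability of that point in the ambient representation space. The reason is that semi-invariants on a closed invariant subvariety of an affine variety lift to the ambient space, because $GL(v)$ is reductive.

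\emph{Step 2 (apply King's theorem).} King proves that a representation $M$ with $\theta'\cdot\dim M=0$ is $\theta'$-semistable exactly when $\theta'\cdot\dim M'\le 0$ for every subrepresentation $M'$, with the sign convention fixed to match the convention in the statement. The proof is a Hilbert--Mumford argument. A one-parameter subgroup $\lambda$ has a limit $\lim_{t\to0}\lambda(t)\cdot M$ precisely when its weight filtration is a filtration of $M$ by subrepresentations. Along such a filtration, the pairing $\langle\theta',\lambda\rangle$ is a positive combination of the numbers $\theta'\cdot\dim M_{\ge p}$. It therefore suffices to test single subrepresentations. I will quote this result rather than reprove it.

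\emph{Step 3 (translate subrepresentations).} A subrepresentation $M'=(S,S_\infty)$ has $S_\infty=0$ or $S_\infty=\mathbb{C}$.
\begin{itemize}
\item If $S_\infty=0$, then $S$ must be stable under $x$ and $y$, and the arrows into $\infty$ must kill $S$, i.e.\ $S_k\subset\ker j_k$ for all $k$. The condition $\theta'\cdot\dim M'\le0$ then reads $\theta\cdot\dim S\le 0$.
\item If $S_\infty=\mathbb{C}$, then $S$ must be $x,y$-stable and contain the images of the arrows out of $\infty$, i.e.\ $\operatorname{Im} i_k\subset S_k$ for all $k$. The condition becomes $\theta\cdot\dim S-\theta\cdot v\le0$.
\end{itemize}
These are exactly the two conditions of the statement.

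The main obstacle is Step 1: making sure the GIT notion of $\theta$-semistability used to define $\mathcal{M}^\theta_\lambda(v,w)$ matches King's convention. Concretely, the character $\chi_\theta(g)=\prod\det(g_k)^{\theta_k}$ versus its inverse determines which inequality direction appears, and one must check that extending from $GL(v)$ to $GL(v)\times\mathbb{C}^\times$ creates no new semi-invariants. I would settle this first by testing the case $n=1$, $Q$ with no arrows. There the quotient should be $T^*\mathrm{Gr}(v,w)$ for $\theta>0$, which corresponds to the condition that no nonzero subspace of $\ker j$ is allowed. This fixes the signs.
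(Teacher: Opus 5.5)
Your proposal is correct and follows the route the paper indicates. The paper gives no argument beyond saying the statement is derived from King's criterion (with Nakajima cited as an alternative), and your reduction is the standard way to carry out that derivation: Crawley-Boevey's extra vertex $\infty$ with $\theta_\infty=-\theta\cdot v$, lifting semi-invariants from the closed subvariety $\mu^{-1}(\lambda)$, and the case split $S_\infty\in\{0,\mathbb{C}\}$.

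You are also right on two further points. The right-hand side of the second inequality must be $\theta\cdot(\dim V_i)_{i=1}^n$. And the sign of the character, which the paper never fixes, has to be pinned down against the $\theta>0$ example $T^*\mathrm{Gr}(v,w)$.
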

\begin{corollary}
    
In the special case where $\theta_i$ are all positive or negative, the point $(x,y,i,j)\in \mu^{-1}(\lambda)$ is $\theta$-semistable if and only if for any collection of vector subspaces $S=(S_i)_{1\le i\le n}\subset V=(V_i)_{1\le i\le n}$ that is stable under the maps $x$ and $y$, we have
\begin{enumerate}
\item If $\theta_i>0,\ \forall\  1\le i\le n$, then $S_i \subset \text{ker}\  j_i,\ \forall\, 1\le i\le n \;\Longrightarrow\;S=0$,
\item If $\theta_i<0,\ \forall\  1\le i\le n$, then $S_i \supset \text{Im} \ j_i,\ \forall\, 1\le i\le n \;\Longrightarrow\;S=0$,
\end{enumerate}
\end{corollary}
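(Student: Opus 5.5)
We specialize the preceding King/Nakajima stability criterion to the case where every $\theta_i$ has the same sign. In that proposition a point is $\theta$-semistable exactly when every $x,y$-stable collection $S=(S_i)$ satisfies two conditions. If $S_i\subset\ker j_i$ for all $i$, then $\theta\cdot(\dim S_i)\le 0$. If $S_i\supset \operatorname{Im} i_i$ for all $i$, then $\theta\cdot(\dim S_i)\le\theta\cdot(\dim V_i)$. The second inequality in the display of the proposition should be read with $\dim V_i$ on the right, as in King's criterion. Reading the condition in part (2) of the corollary as $S_i\supset\operatorname{Im} i_i\Rightarrow S=V$ (the statement has a typo here), the plan is to show that, when the signs are uniform, one of the two conditions holds automatically and the other collapses to the stated vanishing.

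\emph{Case $\theta_i>0$ for all $i$.} For any nonzero $S$, we have $\theta\cdot\dim S=\sum\theta_i\dim S_i>0$, and it vanishes only when $S=0$. So the first condition says exactly that an $x,y$-stable $S$ with $S_i\subset\ker j_i$ must be $0$. For the second condition, note $\dim S_i\le\dim V_i$ and all $\theta_i>0$, so $\theta\cdot\dim S\le\theta\cdot\dim V$ for every $S\subset V$; thus it imposes nothing. Semistability is therefore equivalent to (1).

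\emph{Case $\theta_i<0$ for all $i$.} Now $\theta\cdot\dim S\le 0$ holds for every $S$, so the first condition is automatic. The second condition reads $\sum\theta_i(\dim S_i-\dim V_i)\le 0$. Each summand is a product of a negative number and a nonpositive number, hence is $\ge 0$. So the sum is $\le 0$ only if every summand vanishes, i.e.\ $S_i=V_i$ for all $i$. Hence semistability is equivalent to: every $x,y$-stable $S$ containing $\operatorname{Im} i_i$ for all $i$ equals $V$. This is (2) once the statement is read as above. If one prefers the literal form ``$\Rightarrow S=0$'', the same statement can be obtained by passing to annihilators $S^\perp\subset V^*$ under the duality $(x,y,i,j)\mapsto({}^t x,{}^t y,{}^t j,{}^t i)$, which exchanges $\ker j$ and $\operatorname{Im} i$.

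The argument is a direct sign check. The only real obstacle is getting the second inequality and the conclusion of (2) in the correct (non-typo) form. Beyond that, equality cases must be handled carefully: in both cases the statement concerns semistability, and strictness enters only through ``$=0$ iff $S=0$'' (respectively ``$S=V$''). For generic $\theta$, semistability coincides with stability, so nothing further is needed.
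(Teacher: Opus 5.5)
Your proof is correct and is the intended argument. The paper gives no separate proof and treats the corollary as the direct sign specialization of the preceding King-type criterion; your corrections are the right ones, namely $\theta\cdot\dim V$ on the right of the second inequality, and $S_i\supset\operatorname{Im} i_i\Rightarrow S=V$ in (2).

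Your side remark overreaches, though. The literal conclusion $S=0$ in (2) is false whenever $V\neq 0$, since $S=V$ always satisfies the hypothesis. The annihilator duality only turns the correct condition into $T\subset\ker({}^t i)\Rightarrow T=0$ for $({}^t x,{}^t y)$-stable $T\subset V^*$, not into the statement as printed.
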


We will end this section by computing some examples.
\begin{example}[One vertex, no arrows]
Consider the quiver with one vertex and no arrows. Then we have $T^*\text{Rep}(Q^{\heartsuit},v,w)=\text{Hom}(V,W)\oplus \text{Hom}(W,V)$ acting by $\text{GL}(V)$.
The moment map is $\mu(i,j)=-ji$,
so $\mu^{-1}(\lambda)=\{(i,j)\mid ji=-\lambda\}$.

If $\lambda\neq 0$, then $\mu^{-1}(\lambda)\neq\varnothing$ if and only if $v\le w$, and the $G$-action is free. The map $\mu^{-1}(\lambda)\to \mathfrak{gl}(W),\  (i,j)\mapsto ij$ identifies $\mathcal M_\lambda^0(v)$ with the $\text{GL}(W)$-orbit of the diagonal matrix with $-\lambda$ repeated $v$ times and $0$ repeated $w-v$ times.

Now let $\lambda=0$. If $\theta>0$, then $(i,j)$ is $\theta$-semistable if and only if $i$ is injective. The condition $ji=0$ implies $j$ factors through $W/V$, hence $\mathcal M_0^\theta(v)=T^*\text{Gr}(v,W)$. If $\theta<0$, then $(i,j)$ is $\theta$-semistable if and only if $j$ is surjective. Then $i$ factors through $\ker j$, and up to $GL(V)$-conjugacy, $\mathcal M_0^\theta(v)=T^*\text{Gr}(w-v,W)$. If $\theta=0$, the image of $ \mu^{-1}(0)\to \mathfrak{gl}(W),\ (i,j)\mapsto ij $ consists of matrices with square zero and rank $\le \min(v,\lfloor w/2\rfloor)$, and $\mathcal M_0^0(v)$ equals this image.

One also derives
\end{example}
\begin{example}[One vertex, one loop]
Consider the quiver with one vertex and one loop. Assume $w=1$ and $\lambda=0$. Then we have
$T^*\text{Rep}(Q^{\heartsuit},v,1)=\text{End}(V)^{\oplus 2}\oplus V^*\oplus V$
acting by $\text{GL}(V)$. The moment map is
$\mu(X,Y,i,j)=[X,Y]-ji$.

First, consider $\theta=0$. There is a map
$(\mathbb C^{2v}) \to \mu^{-1}(0)$ given by
$$(x_1,\dots,x_v,y_1,\dots,y_v)\mapsto
(\text{diag}(x_i),\text{diag}(y_i),0,0)
$$
It descends to a morphism
$(\mathbb C^{2v})/S_v \to \mathcal M_0^0(v)$,
and this morphism is an isomorphism. Hence
$\mathcal M_0^0(v,1)\cong (\mathbb C^{2v})/ S_v=(\mathbb{C}^2)^{(v)}$.

Now consider $\theta<0$. $(X,Y,i,j)$ is $\theta$-semistable
if and only if $\mathbb C\langle X,Y\rangle i = V$.
If moreover $[X,Y]=ji$, one shows $j=0$, so $X$ and $Y$ commute.
This identifies $\mathcal M_0^{-}(v,1)$ with $\text{Hilb}^v(\mathbb C^2)$:
To the $GL(V)$-orbit of $(X,Y,i)$, we associate the ideal
$I=\{f\in\mathbb C[x,y]\mid f(X,Y)i=0\}$; Conversely, for a codimension $v$ ideal $I$ in $\mathbb{C}^2$, we set $V=\mathbb{C}^2/I$ and assign $i$ the map $\mathbb{C}\rightarrow V,\ 1\mapsto 1+I$ and take $X,Y$ to be the multiplication by $x$ and $y$.

Finally, for $\theta>0$, one obtains similarly
$\mathcal M_0^{+}(v)\cong \text{Hilb}^v(\mathbb C^2)$.
\end{example}

\begin{example}[ADE surfaces as quiver varieties]
Consider the affine Dynkin quivers of ADE type: $Q=\widetilde{A_n}, \widetilde{D_n}, \widetilde{E_6}, \widetilde{E_6}, \widetilde{E_8}$. Each quiver of those types can be realized as a McKay-Springer quiver of a finite group $G$ in $SL(2,\mathbb{C})$. We choose the dimension vector $V= (V_0,V_1,...,V_n)$ to be the set of dimensions of all irreducible representations of $G$ (Choosing $v_0=1$ corresponds to the trivial representation). Then, choosing $\theta_i$ to be all positive, it is shown in \cite{CrawleyBoevey2001ExceptionalFibres} that the Nakajima quiver variety $\mathcal M_0^{\theta}(v,0)$ is isomorphic to $G\text{-Hilb}(\mathbb{C}^2)$, and $\mathcal M_0^{0}(v,0)$ is isomorphic to $\mathbb{C}^2/G$, so this recovers the Kleinian resolutions.
\end{example}

\section{Maximal Cohen-Macaulay modules on Symplectic singularities}\label{S3}
The family of maximal Cohen-Macaulay modules is an important feature of singularities. Let's start from the definition: Suppose $R$ is a Cohen-Macaulay local ring with maximal ideal $\mathfrak{m}$. An $R$ module $M$ is called Cohen-Macaulay if $\dim(\text{Supp}(M))=\text{depth} M$. It is called maximal Cohen-Macaulay if both of those two numbers equal $\dim R$. By the cohomological interpretation of depth (cf.\cite{Hartshorne1977} III. \S3, Exercise 3.4), this is also equivalent to $H_\mathfrak{m}^i(M)=0$ for $i< \dim R$. We call a sheaf $\mathcal{F}$ maximal Cohen-Macaulay if $\mathcal{F}_p$ for all points $p$ (including those non-closed ones).

Maximal Cohen-Macaulay modules arise naturally in syzygies of free resolutions. Again, suppose $R$ is a local ring and $M$ is an $R$ module. Take generators of $M$ so we can resolve $M$ using a free module $F$ that maps onto $M$. Then you get a short exact sequence $0\rightarrow K\rightarrow F\rightarrow M\rightarrow 0$. Taking local cohomology, we see that if $\text{depth}M<\text{depth F}=\dim R$, then there is $\text{depth}K=\text{depth} M+1$, and if $M$ is maximal Cohen-Macaulay, then so is $K$. Hence, if you take a free resolution of a module, the depths of syzygies become larger and larger until they reach $\dim R$, and after this, all syzygies are maximal Cohen-Macaulay. Since we are going to use this many times, we will record this as a lemma.
\begin{lemma}\label{lem3.1}
Suppose $R$ is a local ring and $M$ is an $R$-module. Take a free resolution $\cdot\cdot\cdot\xrightarrow{d_2}R^{n_1}\xrightarrow{d_1}R^{n_0}\xrightarrow{d_0}M\rightarrow 0$ and denote the syzygies by \,$N_i=\mathrm{im}\, d_i$; then 
$$\mathrm{depth}(N_i)\geq\min\{\mathrm{depth}R, \mathrm{depth}M+i\}$$
\end{lemma}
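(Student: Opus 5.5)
The plan is to argue by induction on $i$, using the short exact sequences of syzygies together with the depth lemma. This is the standard inequality for a short exact sequence $0\to A\to B\to C\to 0$ of finitely generated modules over a Noetherian local ring:
$$\mathrm{depth}\,A\geq\min\{\mathrm{depth}\,B,\ \mathrm{depth}\,C+1\}.$$
It follows from the long exact sequence in local cohomology $H^{j-1}_\mathfrak{m}(C)\to H^j_\mathfrak{m}(A)\to H^j_\mathfrak{m}(B)$, via the characterization $\mathrm{depth}\,M=\min\{j: H^j_\mathfrak{m}(M)\neq 0\}$ recalled just before Lemma \ref{lem3.1} (\cite{Hartshorne1977}). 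Concretely, if $j<\min\{\mathrm{depth}\,B,\mathrm{depth}\,C+1\}$, then $j-1<\mathrm{depth}\,C$ and $j<\mathrm{depth}\,B$, so both outer terms vanish and hence $H^j_\mathfrak{m}(A)=0$. Throughout I take $R$ Noetherian and $M$ finitely generated, as is implicit in the paper. One convention should be fixed: since $N_0=\mathrm{im}\,d_0=M$, the case $i=0$ reads $\mathrm{depth}\,M\geq\min\{\mathrm{depth}\,R,\mathrm{depth}\,M\}$, which is trivial.

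For the inductive step, exactness of the resolution at $R^{n_{i-1}}$ gives $\ker d_{i-1}=\mathrm{im}\,d_i=N_i$, so there is a short exact sequence
$$0\to N_i\to R^{n_{i-1}}\xrightarrow{d_{i-1}} N_{i-1}\to 0.$$
Since $\mathrm{depth}\,R^{n}=\mathrm{depth}\,R$, the depth lemma gives $\mathrm{depth}\,N_i\geq\min\{\mathrm{depth}\,R,\ \mathrm{depth}\,N_{i-1}+1\}$. By the induction hypothesis, $\mathrm{depth}\,N_{i-1}+1\geq\min\{\mathrm{depth}\,R+1,\ \mathrm{depth}\,M+i\}$. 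Taking the minimum with $\mathrm{depth}\,R$ absorbs the term $\mathrm{depth}\,R+1$, and the bound $\min\{\mathrm{depth}\,R,\mathrm{depth}\,M+i\}$ follows.

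The only point needing care is degenerate cases, and this is the step I expect to be the (mild) obstacle. If some $N_i=0$, its depth is $\infty$ by convention, so the inequality holds trivially. If $N_{i-1}=0$, then $N_i\cong R^{n_{i-1}}$, which has depth $\mathrm{depth}\,R$. Apart from this bookkeeping, the argument is just the local-cohomology depth lemma applied $i$ times.
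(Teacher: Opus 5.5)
Your proof is correct and is essentially the paper's argument. The paper's one-line proof applies the depth lemma (Eisenbud, Corollary 18.6) repeatedly to the sequences $0\to N_i\to R^{n_{i-1}}\to N_{i-1}\to 0$, which is exactly your induction. Your local-cohomology derivation of the depth lemma just supplies the cited fact.
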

\begin{proof}
Repeatedly use \cite{Eisenbud1995} Corollary 18.6.
\end{proof}
For regular local rings, all modules have finite projective dimension. Hence, by the Auslander–Buchsbaum theorem (cf.\cite{Eisenbud1995}), maximal Cohen-Macaulay modules are the same as projective modules, and this occurs only in regular local rings. This means that for any singular local ring $R$, there is a subcategory in $R$ modules consisting of maximal Cohen-Macaulay modules that are not projective.

In derived categories, there is a clear result explaining that maximal Cohen-Macaulay modules are essential features of Gorenstein singularities. For variety $X$, we define the \emph{singular category} $D\!_{sing}(X)$ by the Verdier quotient $D^b(\text{Coh}(X))/\text{Perf}(X)$, where $\text{Perf}(X)$ denotes the subcategory of perfect complexes in $D^b(\text{Coh}(X))$. Here, a complex of coherent sheaves is said to be perfect if it is quasi-isomorphic to a bounded complex of locally free sheaves of finite rank. If $X$ is smooth, again by the Auslander–Buchsbaum theorem, any coherent sheaf admits a finite length locally free resolution , so the singular category is trivial, and this only happens when $X$ is nonsingular, explaining this terminology. Now, the following proposition tells us about the generators in the singular category of Gorenstein varieties, cf.\cite{Buchweitz1986}, \cite{Orlov2004}, and \cite{Lu2020TriangleEquivalences}.

\begin{proposition}
Let $X$ be a quasi-projective Gorenstein variety, then every object in $D\!_{sing}(X)$ is isomorphic to a maximal Cohen-Macaulay sheaf concentrated at zero degree. Furthermore, $D\!_{sing}(X)$ is isomorphic to the stabilized category $\underline{MCM}(X)$, whose objects are maximal Cohen-Macaulay sheaves and morphisms are morphisms between coherent sheaves quotient by those factoring through vector bundles.
\end{proposition}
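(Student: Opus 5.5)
The plan follows Buchweitz and Orlov. First I show that every object of $D_{sing}(X)$ is isomorphic to a shifted maximal Cohen-Macaulay sheaf. Then I remove the shift using the Gorenstein hypothesis. Finally I compare morphism spaces.

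\emph{Step 1: reduction to a shifted sheaf.} Let $\mathcal{C}^\bullet\in D^b(\text{Coh}(X))$. Since $X$ is quasi-projective, it has an ample line bundle, so every coherent sheaf is a quotient of a finite sum of negative twists of it. Hence $\mathcal{C}^\bullet$ is quasi-isomorphic to a bounded-above complex $\mathcal{E}^\bullet$ of locally free sheaves of finite rank. For $N\gg 0$, the brutal truncation $\sigma_{\geq -N}\mathcal{E}^\bullet$ is a bounded complex of locally free sheaves, hence perfect. Because $\mathcal{C}^\bullet$ is bounded, the complement $\sigma_{<-N}\mathcal{E}^\bullet$ is quasi-isomorphic to $\mathcal{K}[N+1]$, where $\mathcal{K}$ is the kernel of $\mathcal{E}^{-N-1}\to\mathcal{E}^{-N}$. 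The resulting triangle shows $\mathcal{C}^\bullet\cong\mathcal{K}[N+1]$ in $D_{sing}(X)$. Applying Lemma \ref{lem3.1} stalkwise at every (not necessarily closed) point, $\mathcal{K}$ is an $(N+1-a)$-th syzygy, where $a$ bounds the amplitude of $\mathcal{C}^\bullet$. Taking $N\geq \dim X+a$ therefore makes $\mathcal{K}$ maximal Cohen-Macaulay.

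\emph{Step 2: removing the shift.} Here the Gorenstein hypothesis enters, through the dualizing sheaf being $\mathcal{O}_X$ locally. For a maximal Cohen-Macaulay sheaf $\mathcal{M}$, local duality gives:
\begin{itemize}
\item $\mathcal{E}xt^i(\mathcal{M},\mathcal{O}_X)=0$ for $i>0$;
\item $\mathcal{M}^\vee$ is again maximal Cohen-Macaulay;
\item $\mathcal{M}\cong\mathcal{M}^{\vee\vee}$.
\end{itemize}
Choose a surjection $\mathcal{P}\to\mathcal{M}^\vee$ from a locally free sheaf, with kernel $\mathcal{L}$, which is maximal Cohen-Macaulay. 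Dualizing, and using the Ext vanishing, gives an exact sequence $0\to\mathcal{M}\to\mathcal{P}^\vee\to\mathcal{L}^\vee\to 0$ with $\mathcal{L}^\vee$ maximal Cohen-Macaulay. In $D_{sing}(X)$ this yields $\mathcal{M}\cong\mathcal{L}^\vee[-1]$. Iterating $N+1$ times turns $\mathcal{K}[N+1]$ into an unshifted maximal Cohen-Macaulay sheaf, which proves the first assertion.

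\emph{Step 3: the equivalence.} The composite $MCM(X)\to D^b(\text{Coh}(X))\to D_{sing}(X)$ kills every morphism factoring through a vector bundle, since vector bundles are zero in $D_{sing}(X)$. It therefore descends to $\underline{MCM}(X)\to D_{sing}(X)$, which is essentially surjective by Steps 1 and 2.

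The main obstacle is full faithfulness. A morphism in the Verdier quotient is a roof $\mathcal{M}\xleftarrow{s}\mathcal{Z}\to\mathcal{N}$ with $\text{cone}(s)$ perfect. I would first use Steps 1 and 2 to replace $\mathcal{Z}$ by a maximal Cohen-Macaulay sheaf. Then I would show that a map of maximal Cohen-Macaulay sheaves with perfect cone is an isomorphism up to locally free summands. The key input is the vanishing $\text{Ext}^i(\mathcal{M},\mathcal{P})=0$ for $i>0$, $\mathcal{M}$ maximal Cohen-Macaulay and $\mathcal{P}$ locally free.

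On an affine $X$ this vanishing follows directly from the local Ext vanishing in Step 2. On a quasi-projective $X$, however, the local-to-global spectral sequence leaves the term $H^i(\mathcal{H}om(\mathcal{M},\mathcal{P}))$, which need not vanish. To handle it, I would replace $\mathcal{P}$ by sufficiently negative twists by the ample bundle, so that these cohomology groups vanish by Serre duality, and then argue via a colimit over such twists. This follows Orlov's treatment and the extension in \cite{Lu2020TriangleEquivalences}.
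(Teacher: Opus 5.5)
The paper gives no proof of this proposition; it only cites Buchweitz, Orlov and Lu. Your Steps 1 and 2 are Orlov's argument for the first assertion, and they are sound up to an off-by-one. If $\mathcal{E}^\bullet$ is exact in degrees $\le -N$, then $\sigma_{<-N}\mathcal{E}^\bullet\simeq\operatorname{coker}(\mathcal{E}^{-N-2}\to\mathcal{E}^{-N-1})[N+1]\cong\ker(\mathcal{E}^{-N}\to\mathcal{E}^{-N+1})[N+1]$, not the kernel of $\mathcal{E}^{-N-1}\to\mathcal{E}^{-N}$. This is harmless, since the sheaf is still a high syzygy.

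The genuine gap is Step 3, and it cannot be closed: for non-affine $X$ the second assertion is false with the stated morphisms.
\begin{itemize}
\item \emph{Setup.} Let $X\subset\mathbb{P}^2$ be an irreducible nodal cubic with node $p$, normalization $\nu\colon\mathbb{P}^1\to X$, and a smooth point $q$. Put $\mathcal{M}=\mathfrak{m}_p$. It is maximal Cohen-Macaulay, and it is not free at $p$, so it is nonzero both in $D_{sing}(X)$ and in $\underline{MCM}(X)$.
\item \emph{In $D_{sing}(X)$.} The inclusion $\iota\colon\mathcal{M}(-q)\hookrightarrow\mathcal{M}$ has cokernel $\mathcal{O}_q$, which is perfect. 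So $\iota$ is an isomorphism in $D_{sing}(X)$.
\item \emph{In $\underline{MCM}(X)$.} For a node, $\mathcal{H}om(\mathfrak{m}_p,\mathfrak{m}_p)=\nu_*\mathcal{O}_{\mathbb{P}^1}$. Hence $\mathrm{Hom}_X(\mathcal{M},\mathcal{M}(-q))=H^0(\mathbb{P}^1,\mathcal{O}(-1))=0$.
\item \emph{Conclusion.} $\mathrm{Hom}_{\underline{MCM}(X)}(\mathcal{M},\mathcal{M}(-q))=0$, while $\iota^{-1}$ is a nonzero element of $\mathrm{Hom}_{D_{sing}(X)}(\mathcal{M},\mathcal{M}(-q))$. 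So the functor $\underline{MCM}(X)\to D_{sing}(X)$ is not full.
\end{itemize}
The same $\iota$ refutes your intermediate claim that a morphism of maximal Cohen-Macaulay sheaves with perfect cone is an isomorphism up to locally free summands. Indeed $\iota$ has perfect cone but is not invertible in $\underline{MCM}(X)$. More generally, $\mathcal{M}(-D)\hookrightarrow\mathcal{M}$ has perfect cone for any effective Cartier divisor $D$ avoiding $\mathrm{Sing}(X)$.

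A colimit over negative twists does not repair this. Such a colimit computes homomorphisms over a neighbourhood of the singular locus, which is what $D_{sing}(X)$ sees. These are in general strictly bigger than $\underline{\mathrm{Hom}}_X$: here $\mathrm{End}_{D_{sing}(X)}(\mathcal{M})\cong\mathbb{C}\times\mathbb{C}$, while $\underline{\mathrm{End}}_X(\mathcal{M})=\mathbb{C}$.

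What is true, and what your Step 3 does prove, is the affine case (Buchweitz). There $\mathrm{Ext}^i_X(\mathcal{M},\mathcal{P})=H^0(\mathcal{E}xt^i(\mathcal{M},\mathcal{P}))=0$ for $i>0$, so no twisting is needed. This is the only case the paper uses, since $X$ is assumed affine from Section 3 on. For quasi-projective $X$ you should keep the first assertion and handle the second in one of two ways:
\begin{itemize}
\item restrict to an affine open $U\supseteq\mathrm{Sing}(X)$, when one exists, and use Orlov's equivalence $D_{sing}(X)\simeq D_{sing}(U)$; or
\item describe morphisms in $D_{sing}(X)$ by Orlov's colimit formula rather than by $\underline{\mathrm{Hom}}_X$.
\end{itemize}
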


The maximal Cohen-Macaulay condition is actually homological: Applying Grothendieck local duality (cf.\cite{HartshorneRD}, V), we obtain the following lemma:

\begin{lemma}\label{lem3.3}
A coherent sheaf $\mathcal{F}$ on a Gorenstein variety $X$ is maximal Cohen-Macaulay if and only if $\mathcal{E}xt^{i}(\mathcal{F}, \mathcal{O}_X)=0$ for all $i>0$.
\end{lemma}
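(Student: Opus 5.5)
The approach is to reduce to a local statement at every point and then use Grothendieck local duality on a Gorenstein local ring. Since $X$ is Gorenstein, for every (not necessarily closed) point $p\in X$ the local ring $R=\mathcal{O}_{X,p}$ is Gorenstein of dimension $d=\dim R$, so $R$ is its own dualizing module. Both sides of the claimed equivalence are stalk-local: $\mathcal{F}$ is maximal Cohen-Macaulay iff each $\mathcal{F}_p$ is, and since $\mathcal{F}$ is coherent, $\mathcal{E}xt^i(\mathcal{F},\mathcal{O}_X)_p\cong \operatorname{Ext}^i_R(\mathcal{F}_p,R)$. It therefore suffices to prove the following: for a finitely generated module $M$ over a Gorenstein local ring $(R,\mathfrak m)$ of dimension $d$, $M$ is maximal Cohen-Macaulay or zero iff $\operatorname{Ext}^i_R(M,R)=0$ for all $i>0$.

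For this I will invoke local duality (\cite{HartshorneRD}, V) in the form
$$H^i_{\mathfrak m}(M)\cong \operatorname{Hom}_R\big(\operatorname{Ext}^{d-i}_R(M,R),E(R/\mathfrak m)\big)^{\wedge},$$
that is, $H^i_{\mathfrak m}(M)$ is Matlis dual to $\operatorname{Ext}^{d-i}_R(M,R)\otimes\widehat R$. Matlis duality is faithful on finitely generated modules, and completion is faithfully flat. Hence $H^i_{\mathfrak m}(M)=0$ iff $\operatorname{Ext}^{d-i}_R(M,R)=0$. The condition $H^i_{\mathfrak m}(M)=0$ for all $i<d$, which is the cohomological characterization of maximal Cohen-Macaulay recalled above, is then equivalent to $\operatorname{Ext}^{j}_R(M,R)=0$ for all $j>0$. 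Also $\operatorname{Ext}^j_R(M,R)=0$ for $j>d$ automatically, since $R$ has injective dimension $d$, so only the range $0<j\le d$ matters and it matches $0\le i<d$ exactly.

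The main subtlety is bookkeeping rather than depth. The paper's definition requires that $\mathcal F_p$ be maximal Cohen-Macaulay at every point, so one must check that passing to nonclosed points causes no trouble, and that a zero stalk (outside the support) is allowed, i.e. "maximal Cohen-Macaulay" is read as "MCM or zero" at such points. The first point is handled by applying the local argument at each $p$ with $d=\dim \mathcal{O}_{X,p}$. The second is consistent because all Ext groups vanish for $M=0$. An alternative that avoids Matlis duality at nonclosed points is to prove the equivalence only at closed points and then use that localization preserves both Ext-vanishing and the MCM property over Cohen-Macaulay rings; I would mention this as a remark.
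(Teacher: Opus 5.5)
Your proposal is correct and follows the same route as the paper, which justifies the lemma only by citing Grothendieck local duality: on each Gorenstein local ring $R=\mathcal{O}_{X,p}$ of dimension $d$, $H^i_{\mathfrak m}(M)=0$ if and only if $\operatorname{Ext}^{d-i}_R(M,R)=0$, combined with the local-cohomology characterization of maximal Cohen--Macaulay modules. Your stalkwise reduction, the index bookkeeping via $\operatorname{injdim} R=d$, and the handling of zero stalks at nonclosed points make explicit what the paper leaves implicit; the stray completion symbol in your displayed formula is only a notational slip, since the fact you actually use (Matlis duality between $H^i_{\mathfrak m}(M)$ and $\operatorname{Ext}^{d-i}_R(M,R)$, faithful on finitely generated modules) is the right one.
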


Now let's consider maximal Cohen-Macaulay sheaves on symplectic singularities. Suppose $X=\text{Spec} \mathcal{O}$ is an affine variety with only symplectic singularities, and $\pi:\widetilde{X}\rightarrow X$ is a symplectic resolution. We denote the structure sheaf of $\widetilde{X}$ by $\widetilde{\mathcal{O}}$. Now consider a finitely generated $\mathcal{O}$-module $M$ (equivalently, a coherent sheaf on $X$, so we abuse the notation of sheaves and modules here).  Define 
$$
\widetilde{M}=\pi^*M/\text{Tor}(\pi^*M),\quad \overline{M}=(\pi^*M)^{\vee\vee}
$$
where $\text{Tor}(\pi^*M)$ is the torsion part of $\pi^*M$ and $(\pi^*M)^{\vee\vee}$ is taking the double dual. One can see that $\widetilde{M}$ is torsion-free and $\overline{M}$ is reflexive, and there is a canonical injection $\widetilde{M}\hookrightarrow\overline{M}$. Actually, there is more we can say with this injection, as the following lemma shows. A module $M$ (a sheaf $\mathcal{F}$, respectively) is called \emph{reflexive} if it is finitely generated (coherent, respectively) and the canonical map $M\rightarrow M^{\vee\vee}$ ($\mathcal{F}\rightarrow \mathcal{F}^{\vee\vee}$, respectively) is an isomorphism, we have
\begin{lemma}
If $M$ is a reflexive $\mathcal{O}$-module, the canonical injection $\widetilde{M}\hookrightarrow\overline{M}$ induces isomorphisms $\pi_*\widetilde{M}=\pi_*\overline{M}=M$.
\end{lemma}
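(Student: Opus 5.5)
We will prove that the canonical maps $M\to\pi_*\widetilde{M}\to\pi_*\overline{M}$ are isomorphisms. Since $X$ is affine, everything can be checked on global sections.

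\textbf{Step 1: the maps and where they agree.} By adjunction we have $M\to\pi_*\pi^*M$. Composing with the surjection $\pi^*M\to\widetilde{M}$ and the injection $\widetilde{M}\hookrightarrow\overline{M}$ gives
\[
M\to\pi_*\widetilde{M}\hookrightarrow\pi_*\overline{M},
\]
where the second map is injective because $\pi_*$ is left exact.

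Let $U\subset X$ be the open set over which $\pi$ is an isomorphism. Because $X$ is normal and $\pi$ is birational, $\operatorname{codim}(X\setminus U)\ge 2$. Over $U$ all three sheaves are identified with $M|_U$. Indeed, $\pi^*M|_U=M|_U$ is torsion-free, because $M$ is reflexive, hence torsion-free. It is also reflexive, so both the torsion quotient and the double dual do nothing on $U$.

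\textbf{Step 2: extend sections over the exceptional locus.} Let $E=\pi^{-1}(X\setminus U)$ and set $V=\pi^{-1}(U)=\widetilde{X}\setminus E$. Then
\[
\Gamma(X,\pi_*\overline{M})=\Gamma(\widetilde{X},\overline{M})\hookrightarrow\Gamma(V,\overline{M})=\Gamma(U,M).
\]
Here injectivity holds because $\overline{M}$ is torsion-free on the integral scheme $\widetilde{X}$, so a section vanishing on the dense open $V$ vanishes. (This does not require $E$ to have codimension $2$; $E$ may well be a divisor.)

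Since $M$ is reflexive on the normal variety $X$ and $\operatorname{codim}(X\setminus U)\ge 2$, Hartogs-type extension for reflexive sheaves gives $\Gamma(U,M)=\Gamma(X,M)=M$.

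\textbf{Step 3: conclude.} The composite
\[
M\to\Gamma(\widetilde{X},\widetilde{M})\hookrightarrow\Gamma(\widetilde{X},\overline{M})\hookrightarrow\Gamma(U,M)=M
\]
is restriction, hence the identity. So every map in the chain is injective, and the last inclusion is surjective. It follows that $\Gamma(\widetilde{X},\overline{M})\to\Gamma(U,M)$ is an isomorphism, and then the middle inclusion is forced to be an isomorphism as well. Therefore $\pi_*\widetilde{M}=\pi_*\overline{M}=M$.

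\textbf{Main obstacle.} The key step is the sandwich argument. One must check that the inclusion $\Gamma(\widetilde{X},\overline{M})\hookrightarrow\Gamma(U,M)$ is compatible with the adjunction map from $M$, so that the composite really is the restriction map. This is a diagram chase on $U$, where all identifications are canonical.

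The only geometric inputs are normality of $X$ (which gives the codimension-$2$ complement and reflexive extension) and birationality of $\pi$. Rationality of the singularity is not needed for this lemma.
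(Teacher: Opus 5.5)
Your argument is correct, and its skeleton matches the paper's proof: the chain $M\to\pi_*\widetilde{M}\hookrightarrow\pi_*\overline{M}\hookrightarrow M$ has composite an isomorphism, which forces every map in it to be an isomorphism. Where you differ is in how the last inclusion is produced.

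The paper builds it by duality. It shows $\overline{M}=(\pi^*M)^{\vee\vee}\hookrightarrow(\pi^*M^\vee)^\vee$, since the map is generically injective and its source is torsion-free. It then uses adjunction and $\pi_*\widetilde{\mathcal{O}}=\mathcal{O}$ to compute $\pi_*(\pi^*M^\vee)^\vee=\mathcal{H}om(M^\vee,\mathcal{O})=M^{\vee\vee}=M$. Reflexivity enters only through $M=M^{\vee\vee}$, and no isomorphism locus or codimension count is needed.

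You instead restrict to $\pi^{-1}(U)$, where $\pi$ is an isomorphism, and extend sections of the reflexive sheaf $M$ across $X\setminus U$ by Hartogs. This needs $\operatorname{codim}(X\setminus U)\ge 2$. That is true, but it uses properness of $\pi$ (valuative criterion at codimension-one points plus Zariski's main theorem), not just birationality, so say so. In exchange, the compatibility you flag as the main obstacle becomes transparent, since the composite is literally the restriction map. The paper only asserts the corresponding identification $M\to\pi_*(\pi^*M^\vee)^\vee=M$ ``by naturality''.

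Both arguments use only normality of $X$ and properness and birationality of $\pi$. Your remark that rationality is unnecessary applies to the paper's proof too: there $\pi_*\widetilde{\mathcal{O}}=\mathcal{O}$ follows from normality alone, although it is attributed to Lemma~\ref{lem2.2}.
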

\begin{proof}
Since pushforward is left exact, we see that the induced map $\pi_*\widetilde{M}\rightarrow\pi_*\overline{M}$ is injective. To prove surjectivity, we consider the natural map $\pi^*M^\vee\rightarrow(\pi^*M)^\vee$. Taking the dual, we get a map $(\pi^*M)^{\vee\vee}\rightarrow(\pi^*M^\vee)^{\vee}$. This map is injective: Since $\pi$ is birational, one sees that this map is generically injective, so its kernel is torsion. Since the dual of a coherent sheaf is reflexive, we see that, in particular, $(\pi^*M)^{\vee\vee}$ is torsion free; hence, the kernel must be trivial. Now taking pushforward, one gets a map $\pi_*(\pi^*M)^{\vee\vee}\hookrightarrow\pi_*(\pi^*M^\vee)^{\vee}$. By the adjunction of pushforward and pullback, and Lemma \ref{lem2.2}, we see that:
\begin{align}\notag
\pi_*(\pi^*M^\vee)^{\vee}
&=\pi_*\mathcal{H}om_{\widetilde{\mathcal{O}}}(\pi^*M^\vee,\widetilde{\mathcal{O}})\\\notag
&=\mathcal{H}om_{\mathcal{O}}(M^\vee,\pi_*\widetilde{\mathcal{O}})\\\notag
&=\mathcal{H}om_{\mathcal{O}}(M^\vee,\mathcal{O})\\\notag
&=M
\end{align}

By naturality, we have a commutative diagram
$$
\begin{tikzcd}
& \pi_*\widetilde{M} \arrow[r, hook]\arrow[rd,dashed,hook] & \pi_*\overline{M} \arrow[d,hook]\\
& M \arrow[u] \arrow[r, "\sim"] & \pi_*(\pi^*M^\vee)^{\vee}=M
\end{tikzcd}
$$
where the left vertical arrow comes from the natural map 
$$M\rightarrow \pi_*\pi^*M\rightarrow\pi_*(\pi^*M/\text{Tor}(\pi^*M))=\widetilde{M}
$$ 
Then we observe that any element in $\pi_*\overline{M}$ has an image in $M$ via the right vertical arrow, and this image can be traced back to the left bottom $M$ and then mapped up to $\pi_*\widetilde{M}$, giving an inverse image for this element. Hence, the map is surjective and thus an isomorphism. To prove they both equal $M$, it suffices to notice that the dashed arrow gives a splitting of $M$ into $\pi_*\widetilde{M}$, while the injectivity of this splitting implies that it is an isomorphism.
\end{proof}

On surface singularities, in particular, we have
\begin{corollary}
If $\dim X=2$, then $\widetilde{M}=\overline{M}$.
\end{corollary}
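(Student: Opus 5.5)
The plan is to show that the cokernel of the canonical injection $\widetilde{M}\hookrightarrow\overline{M}$ vanishes. Since $\dim X=2$, the resolution $\widetilde{X}$ is a smooth surface. On a smooth surface every reflexive sheaf is locally free, because reflexive sheaves satisfy $S_2$, and $S_2$ modules over a two-dimensional regular local ring are maximal Cohen--Macaulay, hence free by Auslander--Buchsbaum. So $\overline{M}=(\pi^*M)^{\vee\vee}$ is locally free. The torsion-free sheaf $\widetilde{M}$ agrees with $\overline{M}$ away from a finite set of closed points, and the quotient $Q:=\overline{M}/\widetilde{M}$ is supported on that finite set. The claim therefore reduces to showing that this zero-dimensional sheaf $Q$ is zero. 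As in the preceding lemma, I would assume $M$ is reflexive; on a normal surface this is the same as maximal Cohen--Macaulay.

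\textbf{Step 1: $Q$ has no global sections.} Push forward the exact sequence $0\to\widetilde{M}\to\overline{M}\to Q\to 0$ to get
\[
0\to\pi_*\widetilde{M}\to\pi_*\overline{M}\to\pi_*Q\to R^1\pi_*\widetilde{M}.
\]
By the preceding lemma the first map is an isomorphism. Hence $\pi_*Q=H^0(Q)$ injects into $R^1\pi_*\widetilde{M}$. Since $Q$ has finite support, $Q=0$ if and only if $H^0(Q)=0$. The whole problem is therefore to show that the connecting map $H^0(Q)\to R^1\pi_*\widetilde{M}$ is zero.

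\textbf{Step 2: use the generation of $\widetilde{M}$ by pulled-back sections.} By construction $\widetilde{M}$ is a quotient of $\pi^*M$. Choose a surjection $\mathcal{O}^n\to M$ and pull it back; this gives a surjection $\widetilde{\mathcal{O}}^n\to\widetilde{M}$ with kernel $K$. Then $R^1\pi_*\widetilde{M}$ is a quotient of $R^1\pi_*\widetilde{\mathcal{O}}^n$, because $R^2\pi_*K=0$ when the fibres have dimension at most $1$. Rationality (Lemma \ref{lem2.2}) gives $R^1\pi_*\widetilde{\mathcal{O}}=0$, so $R^1\pi_*\widetilde{M}=0$. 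Combined with Step 1 this forces $\pi_*Q=0$, hence $Q=0$ and $\widetilde{M}=\overline{M}$.

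The main obstacle is the vanishing $R^2\pi_*K=0$ and making sure $R^1$ is right exact in this setting. This follows from the theorem on formal functions: the fibres of $\pi$ have dimension at most $1$, so $R^{\ge 2}\pi_*$ vanishes on all coherent sheaves. If needed, I would instead argue locally over the singular point with $H^1$ on the affine base $X$, where $H^2(\widetilde{X},K)=0$ for the same dimensional reason.
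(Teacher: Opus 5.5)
Your argument is correct and is essentially the paper's proof. You use the same sequence $0\to\widetilde{M}\to\overline{M}\to Q\to 0$, the preceding lemma to get $\pi_*\widetilde{M}\cong\pi_*\overline{M}$, and the vanishing of $R^1\pi_*\widetilde{M}$ via $\widetilde{\mathcal{O}}^n\twoheadrightarrow\widetilde{M}$, $R^2\pi_*=0$ for fibres of dimension at most $1$, and rationality, ending with the fact that $Q$ is supported at finitely many closed points. Your extra remark that $\overline{M}$ is locally free is true but not needed.
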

\begin{proof}
Consider the short exact sequence $ 0\rightarrow\widetilde{M}\rightarrow\overline{M}\rightarrow Q\rightarrow 0$. Taking the long exact sequence, we get
$$ 0\rightarrow H^0(\widetilde{M})\rightarrow H^0(\overline{M})\rightarrow H^0(Q)\rightarrow H^1(\widetilde{M})
$$
By Lemma \ref{lem3.3}, since we suppose $X$ is affine, we see in particular that pushing forward is the same as taking global sections,so the map $H^0(\widetilde{M})\rightarrow H^0(\overline{M})$ is an isomorphism. Since $M$ is finitely generated, there is a free module mapping onto $M$, say $\mathcal{O}^m\twoheadrightarrow M$. Then one pulls this back to get $\mathcal{\widetilde{O}}^m\twoheadrightarrow \pi^*M\twoheadrightarrow \pi^*M/\text{Tor}(\pi^*M)=\widetilde{M}$. 

Now, since $\dim X= 2$, the relative dimension of $\pi$ is at most 1; hence, by Corollary 11.2 in \cite{Hartshorne1977}, III, we have $H^2(\mathcal{F})=R^2\pi_*\mathcal{F}=0$ for any coherent sheaf $\mathcal{F}$. Applying the long exact sequence, we see that $H^1(\mathcal{\widetilde{O}}^m)\rightarrow H^1(\widetilde{M})$ is surjective. By Lemma \ref{lem2.2}, we have $H^1(\mathcal{\widetilde{O}})=0$, hence $H^1(\widetilde{M})=0$. This implies $H^0(Q)=0$. While $\widetilde{M}$ and $\overline{M}$ coincide on codimension 1 points, we see that $Q$ is supported on closed points. Therefore $H^0(Q)=0$ implies $Q=0$, and hence $\widetilde{M}=\overline{M}$.
\end{proof}

This observation is the starting point in \cite{ArtinVerdierEsnault1985}: If you pull back a reflexive sheaf $M$ on a rational double point to its resolution and quotient by its torsion, you get a reflexive sheaf $\widetilde{M}$, hence a vector bundle on the resolution. Classifying those vector bundles gives you the classification of reflexive (hence maximal Cohen-Macaulay on surfaces) sheaves on ADE singularities.
\begin{theorem}[\cite{ArtinVerdierEsnault1985}, 1.11]
Let $X_P=\text{Spec}\,\mathcal{O}_P$ be a rational double point, then $M\mapsto c_1(\widetilde{M})$ gives a one-to-one correspondence between the isomorphic classes of indecomposable reflexive modules on $X_P$ and the reduced transversal divisors to each component of exceptional fiber. 
\end{theorem}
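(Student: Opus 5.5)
The plan is to combine the two inputs already available: the corollary that $\widetilde{M}=\overline{M}$ on a surface, and the geometry of the minimal resolution of a rational double point, namely that the exceptional fibre is a tree of $(-2)$-curves $E_1,\dots,E_n$ with the ADE configuration. Since $\widetilde{X}$ is a smooth surface, the reflexive sheaf $\widetilde{M}$ is a vector bundle. I would first characterize the vector bundles that arise this way. A bundle $\mathcal{E}$ on $\widetilde{X}$ is of the form $\widetilde{M}$ precisely when it is generated by global sections and $H^1(\mathcal{E}^\vee\otimes\omega)=0$; since $\omega_{\widetilde{X}}$ is trivial near the fibre, this amounts to $R^1\pi_*\mathcal{E}^\vee=0$ by duality. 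Call such bundles \emph{full}.

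The map $M\mapsto\widetilde{M}$ is fully faithful on reflexive modules. Indeed, $\pi_*\widetilde{M}=M$ by the lemma above, and homomorphisms between vector bundles on $\widetilde{X}$ are determined away from the exceptional locus, hence by their pushforwards, which are homomorphisms of reflexive modules. Consequently indecomposable $M$ correspond to indecomposable full bundles, and it suffices to show two things. First, an indecomposable full bundle is determined by $c_1$. Second, $c_1(\mathcal{E})\cdot E_i=\deg(\mathcal{E}|_{E_i})$ equals the number of components of a transversal divisor meeting $E_i$, weighted by the rank.

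For the degree statement, global generation gives $\mathcal{E}|_{E_i}\cong\bigoplus\mathcal{O}(a_j)$ with all $a_j\ge 0$. Combining the vanishing of $R^1\pi_*\mathcal{E}^\vee$ with the formal function theorem along the fibre forces $a_j\le 1$ and also bounds $c_1\cdot Z$ for the fundamental cycle $Z$. Choosing $\operatorname{rk}\mathcal{E}-1$ general sections gives an extension
\[
0\to\mathcal{O}^{r-1}\to\mathcal{E}\to\mathcal{O}(D)\to0,
\]
in which $D$ is a reduced divisor transversal to the exceptional curves with $D\cdot E_i=c_1\cdot E_i$. Conversely, the extensions of $\mathcal{O}(D)$ by trivial bundles are classified by $H^1(\mathcal{O}(-D))$. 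Taking the universal extension produces a full bundle, which I would show to be indecomposable exactly when $D$ is irreducible, with rank equal to the multiplicity $r_i$ of $E_i$ in $Z$ when $D\cdot E_i=1$.

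The main obstacle is uniqueness: showing that two indecomposable full bundles with the same $c_1$ are isomorphic. I would attempt this with Artin's theorem on deformations of vector bundles trivial on the thickenings $nZ$. By the formal function theorem, it suffices to compare the restrictions to $nZ$ for all $n$, and one must prove that the universal extension is rigid, i.e.\ that the relevant $\operatorname{Ext}^1$ obstructions vanish. If this computation becomes unwieldy, I would instead compute ranks of $H^1(\mathcal{O}(-D)|_{nZ})$ inductively along the ADE tree, and match indecomposables with $(E_i,r_i)$ by a counting argument against the McKay correspondence.
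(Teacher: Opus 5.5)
Your reduction is sound as far as it goes. Full bundles are exactly the globally generated ones with $H^1(\mathcal{E}^\vee)=0$ (here $\omega_{\widetilde X}\cong\mathcal{O}$), $M\mapsto\widetilde{M}$ is an equivalence onto them, and $r-1$ general sections give $0\to\mathcal{O}^{r-1}\to\mathcal{E}\to\mathcal{L}\to 0$ with $\mathcal{L}=\det\mathcal{E}=\mathcal{O}(D)$. The paper gives no proof of this theorem and only cites Artin--Verdier. However, the two steps that carry the content of the theorem do not work as you propose them.

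\textbf{The universal extension is the wrong object.} Since $H^1(\mathcal{O}_{\widetilde X})=0$, one has
$$H^1(\mathcal{O}(-D))\cong H^0(\mathcal{O}_D)/\operatorname{im}(A)\cong\overline{\mathcal{O}}_{\pi(D)}/\mathcal{O}_{\pi(D)}.$$
So the universal extension has rank $1+\delta(\pi(D))$, which is in general larger than $r_i$. Take the trivalent vertex of $E_6$, where $r_i=3$. There $\pi(D)$ has multiplicity $3$ and value semigroup $\langle 3,4\rangle$; a model is the $\mathbb{C}^*$-orbit closure $t\mapsto(at^6,bt^4,ct^3)$ on $x^2+y^3+z^4=0$. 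Hence $\delta=3$ and the universal extension has rank $4$. Being full, it splits off a copy of $\mathcal{O}$ (see below), so it is not indecomposable even though $D$ is irreducible. At the trivalent vertex of $E_8$ one gets rank $16$ instead of $6$.

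The correct construction takes extension classes forming a \emph{minimal system of $A$-module generators} of $H^1(\mathcal{O}(-D))$. Because $\mathfrak{m}\mathcal{O}_{\widetilde X}=\mathcal{O}(-Z)$, there are $\operatorname{mult}\pi(D)-1=Z\cdot D-1$ of them, and this is what yields rank $Z\cdot D=r_i$.

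\textbf{Uniqueness is left without a working argument.} You flag it as the main obstacle, but deformation theory on the thickenings $nZ$ is not what is needed. A count against the McKay correspondence presupposes essentially the statement being proved. The missing idea is elementary.

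Dualizing $0\to\mathcal{O}^{r-1}\to\mathcal{E}\to\mathcal{L}\to0$ gives an exact sequence
$$A^{r-1}\xrightarrow{\partial}H^1(\mathcal{L}^{-1})\to H^1(\mathcal{E}^\vee)\to0,$$
in which $\partial$ sends the basis vectors to the extension classes $\xi_1,\dots,\xi_{r-1}$. So $\mathcal{E}$ is full if and only if the $\xi_k$ generate $H^1(\mathcal{L}^{-1})$ as an $A$-module.

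Since $\operatorname{Aut}(\mathcal{O}^{r-1})=GL_{r-1}(A)$ acts on the tuple $(\xi_k)$, Nakayama lets you move any generating tuple to (a minimal generating tuple, $0,\dots,0$). Hence every full bundle is $\mathcal{M}_{\mathcal{L}}\oplus\mathcal{O}^k$, with $\mathcal{M}_{\mathcal{L}}$ depending only on $\mathcal{L}$. This gives uniqueness at once.

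It also gives $\mathcal{M}_{\mathcal{L}}\oplus\mathcal{M}_{\mathcal{L}'}\cong\mathcal{M}_{\mathcal{L}\otimes\mathcal{L}'}\oplus\mathcal{O}^k$. Combined with Krull--Schmidt, which needs $\mathcal{O}_P$ complete or henselian, this is what forces $c_1\cdot E_j=\delta_{ij}$ for an indecomposable non-free module. Your bounds $a_j\le 1$ and $c_1\cdot Z\le r$ hold for every full bundle, including direct sums, so they cannot single out the indecomposables.
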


On higher dimensional singularity, one may ask whether $\widetilde{M}=\overline{M}$ still holds. Now the following lemma characterizes their difference. Denote by $X_{\text{sing}}$ the singular locus of $X$.

\begin{lemma}\label{lem3.7}
Suppose $\mathrm{codim}_{\widetilde{X}}\pi^{-1}(X_{\text{sing}})\geq 2$. Given a maximal Cohen-Macaulay sheaf $M$ on $X$, we take a free representation of $M^\vee$, $\mathcal{O}^{n_1}\xrightarrow{\phi_1}\mathcal{O}^{n_0}\xrightarrow{\phi_0} M^\vee\rightarrow 0$, and define $N_1=\mathrm{ker}\phi_1$ and $N_2=\mathrm{ker}\phi_2$, we have
\begin{enumerate}
\item $N_1,N_2$ are maximal Cohen-Macaulay modules.
\item $\overline{M}/\widetilde{M}=\mathrm{Tor}(\pi^*N_1^\vee)=\mathcal{T}\!or^\mathcal{O}_1(N_2,\widetilde{\mathcal{O}})$.
\end{enumerate}
\end{lemma}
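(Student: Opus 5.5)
The plan is to read the notation as $N_1=\ker\phi_0=\operatorname{im}\phi_1$ and $N_2=\ker\phi_1$. This gives two short exact sequences
$$0\to N_1\to\mathcal{O}^{n_0}\to M^\vee\to 0,\qquad 0\to N_2\to\mathcal{O}^{n_1}\to N_1\to 0 .$$

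\textbf{Part (1).} Since $X$ is Gorenstein (Lemma \ref{lem2.2}) and $M$ is maximal Cohen-Macaulay, $M^\vee$ is again maximal Cohen-Macaulay. One way to see this is via Lemma \ref{lem3.3} together with the standard fact that $\mathbf{R}\mathcal{H}om(-,\mathcal{O})$ preserves MCM modules over a Gorenstein ring. Lemma \ref{lem3.1}, applied stalkwise, then shows that the syzygies $N_1,N_2$ have full depth. So they are maximal Cohen-Macaulay, and by Lemma \ref{lem3.3} we get $\mathcal{E}xt^{>0}(N_i,\mathcal{O})=0$.

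\textbf{Part (2), first equality.} Dualize the first sequence. Because $\mathcal{E}xt^1(M^\vee,\mathcal{O})=0$ and $M=M^{\vee\vee}$, this gives $0\to M\to\mathcal{O}^{n_0}\to N_1^\vee\to 0$. Dualizing the second sequence in the same way gives $0\to N_1^\vee\to\mathcal{O}^{n_1}\to N_2^\vee\to 0$. Now pull the first of these back along $\pi$. By right exactness, $\pi^*M\to\widetilde{\mathcal{O}}^{n_0}\to\pi^*N_1^\vee\to0$ is exact. The kernel of $\pi^*M\to\widetilde{\mathcal{O}}^{n_0}$ is generically zero, since $\pi$ is birational, so it is torsion. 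The image is torsion-free, so the kernel is exactly $\mathrm{Tor}(\pi^*M)$. This yields
$$0\to\widetilde{M}\to\widetilde{\mathcal{O}}^{n_0}\to\pi^*N_1^\vee\to 0 .$$
Let $K$ be the kernel of the composite $\widetilde{\mathcal{O}}^{n_0}\to\pi^*N_1^\vee/\mathrm{Tor}$. Then
$$0\to\widetilde{M}\to K\to\mathrm{Tor}(\pi^*N_1^\vee)\to0 .$$
Because $K$ is the kernel of a map from a locally free sheaf to a torsion-free sheaf, it is reflexive. Off $\pi^{-1}(X_{\rm sing})$, the map $\pi$ is an isomorphism onto the smooth locus. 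There the MCM sheaf $N_1^\vee$ is locally free, so $\pi^*N_1^\vee$ has no torsion and $K=\widetilde{M}$. Since $\mathrm{codim}\,\pi^{-1}(X_{\rm sing})\ge2$, two reflexive sheaves agreeing outside this set coincide. Hence $K=\widetilde{M}^{\vee\vee}=(\pi^*M)^{\vee\vee}=\overline{M}$, which gives $\overline{M}/\widetilde{M}=\mathrm{Tor}(\pi^*N_1^\vee)$.

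\textbf{Part (2), second equality.} Pull back $0\to N_1^\vee\to\mathcal{O}^{n_1}\to N_2^\vee\to0$. Since $\mathcal{T}or_1(\mathcal{O}^{n_1},\widetilde{\mathcal{O}})=0$, we get the exact sequence
$$0\to\mathcal{T}or_1^{\mathcal{O}}(N_2^\vee,\widetilde{\mathcal{O}})\to\pi^*N_1^\vee\to\widetilde{\mathcal{O}}^{n_1}.$$
The image in $\widetilde{\mathcal{O}}^{n_1}$ is torsion-free, and the kernel is supported on the exceptional locus. So this kernel is precisely $\mathrm{Tor}(\pi^*N_1^\vee)$, which gives the second equality. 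Here the Tor term appears with $N_2^\vee$, the dual of $N_2$, which is itself an MCM second syzygy of $M$'s dual presentation. The obstacle I expect is exactly this bookkeeping: getting the indices and duals right so that the statement's $N_2$ matches. If the statement intends $N_2$ itself, I would instead apply the same argument to the dual presentation of $M$, obtained by dualizing the resolution of $M^\vee$, which is exact by the Ext-vanishing in Part (1).
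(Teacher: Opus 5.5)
Your proof is correct and follows the paper's argument: dualize $0\to N_1\to\mathcal{O}^{n_0}\to M^\vee\to 0$ (you rightly invoke $\mathcal{E}xt^1(M^\vee,\mathcal{O})=0$ at this step), pull back to get $0\to\widetilde{M}\to\widetilde{\mathcal{O}}^{n_0}\to\pi^*N_1^\vee\to 0$, use the codimension-two hypothesis to identify $\overline{M}$ with the reflexive kernel of $\widetilde{\mathcal{O}}^{n_0}\to\pi^*N_1^\vee/\mathrm{Tor}$, and then read off the torsion as a $\mathcal{T}\!or_1$ from the dualized presentation. Your reindexing $N_1=\ker\phi_0$, $N_2=\ker\phi_1$, and the dual in $\mathcal{T}\!or_1^{\mathcal{O}}(N_2^\vee,\widetilde{\mathcal{O}})$, are exactly what the paper's proof produces: it calls these modules $N_0,N_1$ and obtains $\mathcal{T}\!or_1(N_1^\vee,\widetilde{\mathcal{O}})$, which is also the form used later in the proposition showing $\overline{M}/\widetilde{M}\neq 0$, so the printed statement just has an index shift and a missing dual.
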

\begin{proof}
Firstly, take a free resolution of $M$, say $F_\bullet\rightarrow M\rightarrow 0$. Since $M$ is maximal Cohen-Macaulay, by Lemma \ref{lem3.3} one has $\text{Ext}_\mathcal{O}^i(M,\mathcal{O})=0$ for all $i>0$. Hence, taking the dual of this complex still yields an acyclic complex $0\rightarrow M^\vee\rightarrow F^\vee_\bullet$. Then by Lemma \ref{lem3.1}, one sees that $M^\vee$ is also maximal Cohen-Macaulay since it is a high order syzygy (let's say, the $i>\dim \mathcal{O}$-th syzygy, since you can truncate the resolution anywhere) of some module. Then (1) follows again from Lemma \ref{lem3.1}.

To show (2), consider the short exact sequence
$$
0\rightarrow N_0\rightarrow \mathcal{O}^{n_0}\rightarrow M^\vee\rightarrow 0
$$
Since $N_0$ is maximal Cohen-Macaulay, $\text{Ext}_\mathcal{O}^1(N_0,\mathcal{O})=0$,so taking the dual still gives you a short exact sequence
$$
0\rightarrow M \rightarrow \mathcal{O}^{n_0}\rightarrow N_0^\vee\rightarrow 0
$$
Applying pullback, since $\pi$ is birational, $\pi^*M\rightarrow\widetilde{\mathcal{O}}^{n_0}$ is generically injective, and since $\widetilde{\mathcal{O}}^{n_0}$ is free, the kernel coincides with $\text{Tor}(\pi^*M)$; hence, one gets
$$
0\rightarrow \widetilde{M} \rightarrow \widetilde{\mathcal{O}}^{n_0}\rightarrow \pi^*N_0^\vee\rightarrow 0
$$
Suppose $K$ is the kernel of $\widetilde{\mathcal{O}}^{n_0}\twoheadrightarrow\pi^*N_0^\vee\twoheadrightarrow\widetilde{N_0^\vee}$. Then, taking local cohomology, one sees that $K$ is reflexive. By taking the double dual of the map $\pi^*M\rightarrow K$, we get an injection $\overline{M}\hookrightarrow K$ which is also surjective on nonsingular locus. By assumption $\mathrm{codim}_{\widetilde{X}}\pi^{-1}(X_{\text{sing}})\geq 2$, we see $\overline{M}=K$ because two reflexive sheaves coinciding on an open set except a codimension $\geq 2$ locus are the same. So
there is a short exact sequence
$$
0\rightarrow \overline{M} \rightarrow \widetilde{\mathcal{O}}^{n_0}\rightarrow \widetilde{N_0^\vee}\rightarrow 0
$$
Comparing the two short exact sequences above, one gets the first relation in (2). The second follows from the definition of $\mathcal{T}\!or$ and the exact sequence $0\rightarrow M\rightarrow\mathcal{O}^{n_0}\rightarrow\mathcal{O}^{n_1}\rightarrow N_1^\vee\rightarrow 0$, which is the dual of the original one.
\end{proof}

Using this, one can easily construct a module $M$ with non-vanishing $\overline{M}/\widetilde{M}$ on those resolutions with exceptional locus of codimension $\geq 2$. To do this, we need the following lemma.
\begin{lemma}\label{lem3.8}
For any singular local ring $R$ and regular local ring $S$ equipped with a local homomorphism $R\rightarrow S$ inducing an isomorphism on residue fields, we have $pd_{R}S=\infty$.
\end{lemma}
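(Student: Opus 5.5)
We argue by contradiction, using the residue field $k$ shared by $R$ and $S$. Suppose $\mathrm{pd}_R S<\infty$. The aim is to show that $k$ then has finite projective dimension over $R$, so that $R$ is regular by the Auslander--Buchsbaum--Serre theorem, contradicting the assumption that $R$ is singular.

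The first step uses the regularity of $S$. Let $d=\dim S$ and take a regular system of parameters $y_1,\dots,y_d$ of $\mathfrak{m}_S$. The Koszul complex $K_\bullet(y_1,\dots,y_d;S)$ is then a finite free $S$-resolution of $S/\mathfrak{m}_S$. Since $R\to S$ induces an isomorphism on residue fields, $S/\mathfrak{m}_S\cong k$ as $R$-modules. So $k$ is resolved by the finite complex
$$0\to S^{\binom{d}{d}}\to\cdots\to S^{\binom{d}{1}}\to S\to k\to 0,$$
whose terms are finite direct sums of copies of $S$.

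The second step treats this as an exact complex of $R$-modules. Each term is a finite direct sum of copies of $S$, hence has finite projective dimension over $R$ by hypothesis. A standard dimension shift along the exact sequence gives $\mathrm{pd}_R k\le \mathrm{pd}_R S+d<\infty$. Concretely, split the complex into short exact sequences $0\to Z_{i+1}\to S^{b_i}\to Z_i\to 0$. Finite projective dimension has the two-out-of-three property in short exact sequences, so we can induct from the left end, where $Z_d=S$.

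The final step invokes the Auslander--Buchsbaum--Serre theorem: a Noetherian local ring whose residue field has finite projective dimension is regular. This contradicts the assumption that $R$ is singular, so $\mathrm{pd}_R S=\infty$.

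The main obstacle is finiteness. The relevant form of Serre's theorem needs a finite projective resolution of $k$, but $S$ need not be finitely generated over $R$. In the intended application $S$ is a local ring of $\widetilde{X}$ over a point of $X$, which is essentially of finite type over $R$ but not finite. I would handle this by noting that the criterion "$\mathrm{pd}_R k<\infty$ implies $R$ regular" holds for projective dimension computed with arbitrary modules. Indeed, $\mathrm{Tor}^R_i(k,k)$ vanishes for large $i$ whenever $k$ has any finite flat (in particular, finite projective) resolution, because $k$ is finitely generated. Vanishing of $\mathrm{Tor}^R_{i}(k,k)$ for $i\gg0$ already forces regularity, by the minimal-resolution characterization in \cite{Eisenbud1995}. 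The dimension-shifting argument is unaffected by $S$ not being finitely generated, since it never uses finite generation.
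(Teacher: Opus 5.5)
Your proof is correct and follows the same route as the paper: assume $\mathrm{pd}_R S<\infty$, bound $\mathrm{pd}_R k\le \mathrm{pd}_R S+\dim S$, and contradict Serre's characterization of regular local rings. The paper gets this bound by citing Weibel's General Change of Rings Theorem, whereas you prove it by hand by dimension shifting along the Koszul resolution, and your remark that the regularity criterion still applies when $S$ is not finitely generated over $R$ (via vanishing of $\mathrm{Tor}^R_i(k,k)$ and the minimal resolution) makes explicit a point the paper leaves implicit.
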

\begin{proof}
Suppose conversely $pd_{R}S<\infty$, then by the General Change of Rings Theorem (cf.\cite{Weibel1994} Theorem 4.3.1), for any $S$-module $A$, we have $pd_R(A)\leq pd_R(S)+pd_S(A)\leq pd_R(S)+\dim S<\infty$. Now take $A$ to be the residue field $k$ of $S$, then as an $R$-module $k$ is also the residue field of $R$ by assumption. However, since $R$ is singular, we should have $pd_R(k)=\infty$, which is a contradiction.
\end{proof}
\begin{proposition}
Suppose\, $\mathrm{codim}_{\widetilde{X}}\pi^{-1}(X_{\text{sing}})\geq 2$, there exists a maximal Cohen-Macaulay module $M$ such that $\overline{M}/\widetilde{M}\neq 0$. 
\end{proposition}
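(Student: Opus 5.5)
We will use Lemma \ref{lem3.7}, which gives
\[
\overline{M}/\widetilde{M}=\mathcal{T}\!or_1^{\mathcal{O}}(N_2,\widetilde{\mathcal{O}}),
\]
where $N_2$ is the second syzygy in a free presentation of $M^\vee$. The task then splits in two. First, produce some maximal Cohen-Macaulay module $N$ with $\mathcal{T}\!or_1^{\mathcal{O}}(N,\widetilde{\mathcal{O}})\neq 0$. Second, realize $N$ as the $N_2$ attached to a suitable maximal Cohen-Macaulay $M$.

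\textbf{Realizing $N$ as an $N_2$.} Start from any maximal Cohen-Macaulay $N$. Since $X$ is Gorenstein, its dual $N^\vee$ is again maximal Cohen-Macaulay by Lemma \ref{lem3.3} and the argument in the proof of Lemma \ref{lem3.7}. Moreover $\mathrm{Ext}^{>0}(N,\mathcal{O})=0$, so dualizing a free resolution of $N^\vee$ gives an exact coresolution
\[
0\to N\to \mathcal{O}^{a_0}\to\mathcal{O}^{a_1}\to\mathcal{O}^{a_2}\to\cdots .
\]
Let $P$ be the cokernel of $\mathcal{O}^{a_0}\to\mathcal{O}^{a_1}$ and $Q$ the cokernel of $\mathcal{O}^{a_1}\to\mathcal{O}^{a_2}$, so $N$ is the second syzygy of $Q$ via $\mathcal{O}^{a_1}\to\mathcal{O}^{a_2}\to Q$. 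Because the coresolution continues to the right, $Q$ itself is a high syzygy and hence maximal Cohen-Macaulay. Set $M=Q^\vee$; this is maximal Cohen-Macaulay with $M^\vee=Q$ by reflexivity. With this presentation of $M^\vee$, the modules produced in Lemma \ref{lem3.7} are $N_2=N$ (up to free summands, which do not affect $\mathcal{T}\!or_1$), and therefore $\overline{M}/\widetilde{M}=\mathcal{T}\!or_1^{\mathcal{O}}(N,\widetilde{\mathcal{O}})$.

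\textbf{Finding $N$ with nonzero $\mathcal{T}\!or_1$.} This is the main obstacle. Choose a point $y\in\pi^{-1}(X_{\mathrm{sing}})$ lying over a closed singular point $x$, and set $R=\mathcal{O}_{X,x}$ and $S=\mathcal{O}_{\widetilde{X},y}$. Then $R\to S$ is a local homomorphism, $S$ is regular and $R$ is singular. Taking $y$ to be a closed point of the fibre, the residue fields are both $\mathbb{C}$. Lemma \ref{lem3.8} then gives $pd_R S=\infty$, so $\mathrm{Tor}^R_i(L,S)\neq 0$ for some module $L$ and all large $i$; one can take $L=\mathbb{C}$, since for finitely generated $S$ one has $pd_R S=\sup\{i:\mathrm{Tor}^R_i(\mathbb{C},S)\ne0\}$. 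This finite-generation claim is delicate because $S$ is not finite over $R$. I would handle it by working with the sheaf $\widetilde{\mathcal{O}}$, which is not finite over $\mathcal{O}$ either, and instead use flatness considerations: if $\mathrm{Tor}_i^R(\mathbb{C},S)=0$ for large $i$, then the usual rigidity for local maps (via the minimal free resolution of $\mathbb{C}$ over $R$, tensored with $S$ and compared using Nakayama over $S$) should force $pd_R S<\infty$.

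Now take $i>\dim R+1$ with $\mathrm{Tor}_i^R(\mathbb{C},S)\neq0$, and let $N$ be the $(i-1)$-st syzygy of $\mathbb{C}$ (of the skyscraper $\mathcal{O}_x$, globally on the affine $X$). By Lemma \ref{lem3.1}, $N$ is maximal Cohen-Macaulay. Dimension shifting gives
\[
\mathcal{T}\!or_1^{\mathcal{O}}(N,\widetilde{\mathcal{O}})_y=\mathrm{Tor}_i^R(\mathbb{C},S)\neq0 .
\]
Combined with the previous paragraph, this gives $\overline{M}/\widetilde{M}\neq0$.

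Throughout, the hypothesis $\mathrm{codim}\,\pi^{-1}(X_{\mathrm{sing}})\ge2$ enters only through Lemma \ref{lem3.7}.
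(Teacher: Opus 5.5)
Your second half is sound and essentially matches the paper: choosing $x$ and $y$, applying Lemma \ref{lem3.8}, and taking $N$ to be a high syzygy of the skyscraper $\mathcal{O}_x$ so that $\mathcal{T}\!or_1^{\mathcal{O}}(N,\widetilde{\mathcal{O}})_y=\mathrm{Tor}_i^R(\mathbb{C},S)\neq0$. Using $\mathbb{C}$ instead of an unspecified local module is a mild simplification. Your finite-generation worry is settled by the standard fact that $\mathrm{fd}_R T=\sup\{i:\mathrm{Tor}_i^R(\mathbb{C},T)\neq0\}$ for a finite $S$-module $T$ and a local map $R\to S$.

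The gap is in \emph{Realizing $N$ as an $N_2$}. First, the indexing is off. In $0\to N\to\mathcal{O}^{a_0}\to\mathcal{O}^{a_1}\to\mathcal{O}^{a_2}\to Q\to0$, the module $N$ is the \emph{third} syzygy of $Q$. The second syzygy coming from the presentation $\mathcal{O}^{a_1}\to\mathcal{O}^{a_2}\to Q\to0$ is $\ker(\mathcal{O}^{a_1}\to\mathcal{O}^{a_2})\cong\mathcal{O}^{a_0}/N$.

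More importantly, whatever the indexing in the statement of Lemma \ref{lem3.7}, its proof establishes (and the paper applies) the following. Let $0\to M\to\mathcal{O}^{n_0}\to\mathcal{O}^{n_1}\to C\to0$ be the dual of a free presentation of $M^\vee$, and let $B$ be the image of the middle map. Then $\overline{M}/\widetilde{M}$ is the torsion subsheaf of $\pi^*B$, which is $\mathcal{T}\!or_1^{\mathcal{O}}(C,\widetilde{\mathcal{O}})$. So $M$ must be a second syzygy of the module whose $\mathcal{T}\!or_1$ you control; $C$ is the dual of the second syzygy of $M^\vee$, not that syzygy itself.

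For your $M=Q^\vee$, write the coresolution as the dual of a free resolution $G_\bullet\to N^\vee$. Dualizing the presentation $G_1^\vee\to G_2^\vee\to Q\to0$ gives $0\to M\to G_2\to G_1\to C\to0$. Here $C=\mathrm{coker}(G_2\to G_1)\cong\ker(G_0\to N^\vee)$, the first syzygy of $N^\vee$. Hence $\overline{M}/\widetilde{M}\cong\mathcal{T}\!or_2^{\mathcal{O}}(N^\vee,\widetilde{\mathcal{O}})$. With $P$ in place of $Q$ you would get $\mathcal{T}\!or_1^{\mathcal{O}}(N^\vee,\widetilde{\mathcal{O}})$ instead. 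Both involve $N^\vee$, not $N$, and nothing in your choice of $N$ makes them nonzero.

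The repair is to go in the opposite direction, which is exactly the paper's proof: take $M$ to be a second syzygy of $N$ itself, $0\to M\to\mathcal{O}^a\to\mathcal{O}^b\to N\to0$. The image $B$ of the middle map and $M$ are maximal Cohen-Macaulay by Lemma \ref{lem3.1}. Hence $\mathrm{Ext}^1_{\mathcal{O}}(B,\mathcal{O})=\mathrm{Ext}^1_{\mathcal{O}}(N,\mathcal{O})=0$, and dualizing yields a free presentation $(\mathcal{O}^b)^\vee\to(\mathcal{O}^a)^\vee\to M^\vee\to0$. By reflexivity, its dual is the sequence you started with. Lemma \ref{lem3.7} then gives $\overline{M}/\widetilde{M}\cong\mathcal{T}\!or_1^{\mathcal{O}}(N,\widetilde{\mathcal{O}})\neq0$, and no coresolution of $N$ is needed.
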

\begin{proof}
Suppose $\dim X=\dim \widetilde{X}=d$. Choose a singular point $P$ on $X$ and a point $Q\in\pi^{-1}(P)$ and denote the local rings by $\mathcal{O}_P$ and $\mathcal{\widetilde{O}}_Q$. By Lemma \ref{lem3.8}, we have $pd_{\mathcal{O}_P}\mathcal{\widetilde{O}}_Q=\infty$, so there exists a $\mathcal{O}_P$-module $M_P$ such that $\text{Tor}^{\mathcal{O}_P}_{d+1}(N_P,\mathcal{\widetilde{O}}_Q)\neq 0$. Now construct a finitely generated $\mathcal{O}$-module $N$ whose stalk at $P$ is $N_P$ (one can easily do this by lifting a finite presentation of $N_P$), and take a d+1 step free resolution of $N$
$$0\rightarrow K\xrightarrow{\phi_d} \mathcal{O}^{n_d}\xrightarrow{\phi_{d-1}}\cdot\cdot\cdot\rightarrow\mathcal{O}^{n_1}\xrightarrow{\phi_1}\mathcal{O}^{n_0}\xrightarrow{\phi_0} N\rightarrow 0.
$$
where $K$ is the kernel of $\phi_d$. Then, by Lemma \ref{lem3.1}, $K$ is maximal Cohen-Macaulay. Since $\text{Tor}^{\mathcal{O}_P}_{d+1}(N_P,\mathcal{\widetilde{O}}_Q)\neq 0$, we have $\mathcal{T}\!or_{d+1}^\mathcal{O}(N,\mathcal{\widetilde{O}})\neq 0$; hence, the canonical argument in homological algebra gives you $\mathcal{T}\!or_{1}^\mathcal{O}(K,\mathcal{\widetilde{O}})=\mathcal{T}\!or_{d+1}^\mathcal{O}(N,\mathcal{\widetilde{O}})\neq 0$. Now take a free representation of $K$ and suppose the second syzygy is $M$, i.e. one has an exact sequence
$$0\rightarrow M\rightarrow\mathcal{O}^a\rightarrow\mathcal{O}^b\rightarrow K\rightarrow 0
$$
Then $M$ is maximal Cohen-Macaulay by Lemma \ref{lem3.1}, and by Lemma \ref{lem3.7}, $\overline{M}/\widetilde{M}=\mathcal{T}\!or_{1}^\mathcal{O}(K,\mathcal{\widetilde{O}})\neq 0$.
\end{proof}

Since $\widetilde{M}$ and $\overline{M}$ are different, it's a natural question to ask which one works better when we try to do some work here. The first observation is that if the relative dimension is $n$, then $R^n\pi_*\widetilde{M}=0$ because $\widetilde{M}$ is generated by global sections and $R^n\pi_*\widetilde{\mathcal{O}}=0$, but this does not necessarily hold for $\overline{M}$. However, the advantages of considering $\overline{M}$ are more subtle. Let's begin with the key lemma that helps us consider the homological relations between $X$ and $\widetilde{X}$.
\begin{lemma}\label{lem3.10}
For any coherent sheaf $\mathcal{F}$ on $\widetilde{X}$, there is a converging spectral sequence
$$E_2^{i,j}=Ext_{\mathcal{\widetilde{O}}}^i(R^{-j}\pi_*\mathcal{F},\mathcal{\widetilde{O}})\Rightarrow Ext_{\mathcal{O}}^{i+j}(\mathcal{F}
,\mathcal{O})
$$
\end{lemma}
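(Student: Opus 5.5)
The plan is to derive the spectral sequence from Grothendieck duality for the proper morphism $\pi$, combined with the crepancy of the resolution. I read the statement in the form that duality actually produces:
$$E_2^{i,j}=\mathrm{Ext}^i_{\mathcal{O}}(R^{-j}\pi_*\mathcal{F},\mathcal{O})\Rightarrow \mathrm{Ext}^{i+j}_{\widetilde{\mathcal{O}}}(\mathcal{F},\widetilde{\mathcal{O}}).$$
The $E_2$ term lives on $X$, since $R^{-j}\pi_*\mathcal{F}$ is a sheaf on $X$, and the abutment lives on $\widetilde{X}$. This is the form used later in Proposition \ref{prop1.1}.

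\textbf{Step 1: identify $\pi^!\mathcal{O}$.} By Lemma \ref{lem2.2}, $X$ is Gorenstein, so $\omega_X$ is invertible and is a dualizing complex (up to shift). Shrinking $X$ if necessary, or because $X$ is affine Gorenstein and the question is local on $X$, we may take $\omega_X\cong\mathcal{O}$, as the paper does. Since $\pi$ is proper between varieties of the same dimension, $\pi^!\omega_X=\omega_{\widetilde{X}}$ with no shift. The crepant condition $\pi^*\omega_X=\omega_{\widetilde{X}}$ then gives $\pi^!\mathcal{O}\cong\widetilde{\mathcal{O}}$.

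\textbf{Step 2: apply duality.} Grothendieck duality for the proper map $\pi$ (\cite{HartshorneRD}, III) gives a quasi-isomorphism
$$\mathbf{R}\pi_*\mathbf{R}\mathcal{H}om_{\widetilde{\mathcal{O}}}(\mathcal{F},\widetilde{\mathcal{O}})\cong \mathbf{R}\mathcal{H}om_{\mathcal{O}}(\mathbf{R}\pi_*\mathcal{F},\mathcal{O}).$$
Since $X$ is affine, taking $\mathbf{R}\Gamma(X,-)$ is exact on quasi-coherent sheaves. Composing $\mathbf{R}\Gamma(X,-)$ with $\mathbf{R}\pi_*$ gives $\mathbf{R}\Gamma(\widetilde{X},-)$, so we obtain
$$\mathbf{R}\mathrm{Hom}_{\widetilde{\mathcal{O}}}(\mathcal{F},\widetilde{\mathcal{O}})\cong\mathbf{R}\mathrm{Hom}_{\mathcal{O}}(\mathbf{R}\pi_*\mathcal{F},\mathcal{O}).$$

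\textbf{Step 3: run the hypercohomology spectral sequence.} Apply the standard spectral sequence for $\mathbf{R}\mathrm{Hom}(C^\bullet,\mathcal{O})$ of a complex $C^\bullet=\mathbf{R}\pi_*\mathcal{F}$, filtering by the canonical truncations of $C^\bullet$. Its $E_2$ page is $\mathrm{Ext}^i(\mathcal{H}^{-j}(C^\bullet),\mathcal{O})=\mathrm{Ext}^i_{\mathcal{O}}(R^{-j}\pi_*\mathcal{F},\mathcal{O})$, and it abuts to $\mathrm{Ext}^{i+j}$ of the complex, which by Step 2 is $\mathrm{Ext}^{i+j}_{\widetilde{\mathcal{O}}}(\mathcal{F},\widetilde{\mathcal{O}})$.

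Convergence holds because the page is bounded. The sheaves $R^{-j}\pi_*\mathcal{F}$ are coherent, since $\pi$ is proper, and vanish unless $0\le -j\le$ the maximal fibre dimension. Since $\mathcal{O}$ is Gorenstein, it has finite injective dimension $\dim X$, so $\mathrm{Ext}^i_{\mathcal{O}}(-,\mathcal{O})=0$ for $i>\dim X$. Thus only finitely many $E_2^{i,j}$ are nonzero, and the spectral sequence converges.

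The main obstacle I expect is Step 1: getting $\pi^!\mathcal{O}=\widetilde{\mathcal{O}}$ precisely, with the correct shift and with the identification compatible with the duality trace. If $\omega_X$ is only locally trivial, I would instead work with $\omega_X$ throughout, obtaining the sequence $\mathrm{Ext}^i_{\mathcal{O}}(R^{-j}\pi_*\mathcal{F},\omega_X)\Rightarrow\mathrm{Ext}^{i+j}_{\widetilde{\mathcal{O}}}(\mathcal{F},\omega_{\widetilde{X}})$. I would then use the triviality assumption implicit in the paper's conventions, $\mathcal{O}$ being the dualizing sheaf, to replace $\omega_X$ by $\mathcal{O}$ and $\omega_{\widetilde{X}}=\pi^*\omega_X$ by $\widetilde{\mathcal{O}}$.
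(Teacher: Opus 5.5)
Your proposal is correct and follows the paper's proof essentially step for step: Grothendieck duality for $\pi$, the identification $\pi^!\mathcal{O}\cong\widetilde{\mathcal{O}}$, passing to global sections over the affine $X$, and the spectral sequence for $\mathbf{R}\mathrm{Hom}_{\mathcal{O}}(\mathbf{R}\pi_*\mathcal{F},\mathcal{O})$. Your reading of the statement, with $E_2$-terms $\mathrm{Ext}^i_{\mathcal{O}}(R^{-j}\pi_*\mathcal{F},\mathcal{O})$ abutting to $\mathrm{Ext}^{i+j}_{\widetilde{\mathcal{O}}}(\mathcal{F},\widetilde{\mathcal{O}})$, is exactly what that proof produces and how Proposition~\ref{prop4.2} uses it; the hedge in your last paragraph is also unnecessary, since duality holds with any target in place of $\mathcal{O}$, $\pi^!$ commutes with twisting by the invertible sheaf $\omega_X^{-1}$, and so crepancy alone gives $\pi^!\mathcal{O}\cong\omega_{\widetilde{X}}\otimes\pi^*\omega_X^{-1}\cong\widetilde{\mathcal{O}}$ whether or not $\omega_X$ is trivial.
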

\begin{proof}
Suppose $\dim X=\dim \widetilde{X}=n$. By Grothendieck duality, we have an identity in the derived category
$$\mathbf{R}\mathcal{H}om_X(\mathbf{R}\pi_* \mathcal{F},\mathcal{O}[n])=\mathbf{R}\pi_*\mathbf{R}\mathcal{H}om_{\widetilde{X}}(\mathcal{F},\pi^!(\mathcal{O}[n]))
$$
Now $X$ is affine, so the left hand side is essentially $\mathbf{R}\text{Hom}_\mathcal{O}(\mathbf{R}\pi_* \mathcal{F},\mathcal{O}[n])$. Since $X$ is Gorenstein, $\mathcal{O}[n]$ is the dualizing complex on $X$. One can also see that $\mathcal{\widetilde{O}}[n]$ is the dualizing complex on $\widetilde{X}$ because $\widetilde{X}$ is smooth symplectic. Now the upper shriek $\pi^!$ pulls the dualizing complex on $X$ to the dualizing complex on $\widetilde{X}$, so $\pi^!(\mathcal{O}[n])=\mathcal{\widetilde{O}}[n]$.
Then we see the right hand side is $\mathbf{R}\pi_*\mathbf{R}\mathcal{H}om_{\widetilde{X}}(\mathcal{F},\widetilde{\mathcal{O}}[n])$, which is quasi-isomorphic to $\mathbf{R}\text{Hom}_\mathcal{\widetilde{O}}(\mathcal{F},\mathcal{\widetilde{O}}[n])$ since $X$ is affine. Finally, by twisting both sides by $[-n]$ and taking the cohomology, the Grothendieck-Serre spectral sequence gives you the result.
\end{proof}

Before introducing properties of $\overline{M}$, we need two lemmas.

\begin{lemma}\label{lem3.11}
Suppose $\mathcal{F}$ is a reflexive sheaf on a $n$-dimensional quasi-projective nonsingular variety $Y$. If the cohomological dimension of $Y$ is less than $n-1$, that is, $H^{n-1}(\mathcal{G})=0$ for any sheaf $\mathcal{G}$ on $Y$. Then $Ext_{\mathcal{O}_Y}^n(\mathcal{F},\mathcal{O}_Y)=Ext_{\mathcal{O}_Y}^{n-1}(\mathcal{F},\mathcal{O}_Y)=0$.
\end{lemma}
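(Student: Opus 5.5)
We will compute $\mathrm{Ext}^i_{\mathcal{O}_Y}(\mathcal{F},\mathcal{O}_Y)$ through the local-to-global spectral sequence
$$E_2^{p,q}=H^p\bigl(Y,\mathcal{E}xt^q(\mathcal{F},\mathcal{O}_Y)\bigr)\Rightarrow \mathrm{Ext}^{p+q}_{\mathcal{O}_Y}(\mathcal{F},\mathcal{O}_Y).$$
The argument rests on two pieces of information. The first is local: it bounds the supports of the sheaves $\mathcal{E}xt^q(\mathcal{F},\mathcal{O}_Y)$ using reflexivity. The second is global: it uses the cohomological-dimension hypothesis to kill the $p$-direction.

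\textbf{Local step.} Since $Y$ is nonsingular, every local ring $\mathcal{O}_{Y,y}$ is regular. By Auslander--Buchsbaum, $\mathrm{pd}\,\mathcal{F}_y=\dim\mathcal{O}_{Y,y}-\mathrm{depth}\,\mathcal{F}_y$. A reflexive sheaf satisfies Serre's condition $S_2$, so $\mathrm{depth}\,\mathcal{F}_y\ge\min(2,\dim\mathcal{O}_{Y,y})$. Two consequences follow:
\begin{itemize}
\item $\mathcal{E}xt^q(\mathcal{F},\mathcal{O}_Y)_y=0$ whenever $q>\dim\mathcal{O}_{Y,y}-2$ (and also whenever $\dim\mathcal{O}_{Y,y}\le 1$);
\item hence for $q\ge 1$ the support of $\mathcal{E}xt^q(\mathcal{F},\mathcal{O}_Y)$ consists of points of codimension at least $q+2$, so it has dimension at most $n-q-2$.
\end{itemize}
In particular $\mathcal{E}xt^q=0$ for $q\ge n-1$.

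\textbf{Global step.} By Grothendieck vanishing, $H^p(\mathcal{E}xt^q)=0$ once $p>\dim\mathrm{Supp}$, so $E_2^{p,q}=0$ for $q\ge1$ and $p>n-q-2$. For total degree $p+q\in\{n-1,n\}$ with $q\ge1$ we have $p\ge n-1-q>n-q-2$, so all these terms vanish. The only possible contributions to $\mathrm{Ext}^{n-1}$ and $\mathrm{Ext}^{n}$ are therefore $E_2^{n-1,0}=H^{n-1}(\mathcal{F}^\vee)$ and $E_2^{n,0}=H^n(\mathcal{F}^\vee)$.

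\textbf{Where the hypothesis enters.} We must use the cohomological-dimension assumption to kill $H^{n-1}(\mathcal{F}^\vee)$ and $H^n(\mathcal{F}^\vee)$. I read "less than $n-1$" as saying $H^i(\mathcal{G})=0$ for all $i\ge n-1$ and all $\mathcal{G}$. The literal formulation only mentions $H^{n-1}$, but vanishing of $H^{n-1}$ for all coherent sheaves already forces vanishing of all higher $H^i$ on a quasi-projective variety, by the standard dimension-shifting argument with ample line bundles. So both terms vanish, and both Ext groups vanish. For this step I would cite Hartshorne III, Exercise 4.8 for the cohomological dimension $\mathrm{cd}$.

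The main obstacle is making this dimension-shifting argument precise, since the statement literally only assumes $H^{n-1}=0$. The local step is standard commutative algebra.
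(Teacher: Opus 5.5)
Your proposal is correct and follows the paper's proof essentially step by step. The shared steps are:
\begin{enumerate}
\item the local-to-global spectral sequence;
\item the $S_2$ plus Auslander--Buchsbaum bound $\dim\operatorname{Supp}\mathcal{E}xt^q(\mathcal{F},\mathcal{O}_Y)\le n-q-2$ for $q\ge 1$;
\item Grothendieck vanishing, which kills every $q\ge 1$ term in total degrees $n-1$ and $n$;
\item the cohomological-dimension hypothesis, read as vanishing of all $H^{i}$ with $i\ge n-1$ (which is how the paper uses it), to kill $H^{n-1}(\mathcal{F}^\vee)$ and $H^{n}(\mathcal{F}^\vee)$.
\end{enumerate}

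Your side claim that vanishing of $H^{n-1}$ on all coherent sheaves already forces vanishing of $H^{n}$ is true, but the mechanism you name is not the right one. Surjections from sums of $\mathcal{O}(-m)$ only support descending induction and cannot carry vanishing from degree $n-1$ up to degree $n$. The correct argument embeds a quasi-coherent sheaf in a flasque quasi-coherent sheaf (Hartshorne III.3.6, with III.2.9 to pass from coherent to quasi-coherent). Then $H^{n}(\mathcal{G})$ is a quotient of $H^{n-1}$ of the cokernel, which vanishes.
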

\begin{proof}
Firstly, we have a Grothendieck-Serre spectral sequence
$$
E_2^{i,j}=H^{i}(\mathcal{E}xt^j(\mathcal{F},\mathcal{O}_Y))\Rightarrow \text{Ext}^{i+j}(\mathcal{F},\mathcal{O}_Y)
$$
Suppose $P$ is the generic point of a subvariety of codimension $h_P$. The reflexive condition is the same as Serre's condition $S_2$, i.e., $\text{depth}\mathcal{F}_P\geq\min\{2,h_P\}$. Since $Y$ is nonsingular, the Auslander–Buchsbaum theorem implies $\mathcal{F}$ is locally free at those points with $h_P\leq 2$ and has a length $h_P-2$ projective resolution at those points with $h_P> 2$. Then one sees that $\text{Ext}_{\mathcal{O}_P}^j(\mathcal{F},\mathcal{O}_P)=0$
for $j\geq \max \{h_P-1,1\}$, hence $\dim\text{Supp}(\mathcal{E}xt^j(\mathcal{F},\mathcal{O}_Y))\leq n-j-2$ for $j\geq 1$. This implies $H^{n-j-1}(\mathcal{E}xt^j(\mathcal{F},\mathcal{O}_Y))=H^{n-j}(\mathcal{E}xt^j(\mathcal{F},\mathcal{O}_Y))=0$ for $j\geq 1$. Then the spectral sequence tells us $Ext_{\mathcal{O}_Y}^n(\mathcal{F},\mathcal{O}_Y)=H^n(\mathcal{F}^\vee)$ and $Ext_{\mathcal{O}_Y}^{n-1}(\mathcal{F},\mathcal{O}_Y)=H^{n-1}(\mathcal{F}^\vee)$. Finally, since the cohomology dimension of $Y$ is less than $n-1$, those two are all trivial.
\end{proof}
\begin{lemma}\label{lem3.12}
Suppose the fibers of $\pi$ at closed points have dimension $\leq n-2$. For a reflexive sheaf $\mathcal{F}$ on $X$, if $\pi_*\mathcal{F}$ is maximal Cohen-Macaulay, we have $\text{Ext}_{\mathcal{O}}^{n}(R^1\pi_*\mathcal{F},\mathcal{O})=0$.
\end{lemma}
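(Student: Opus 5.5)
The idea is to read off $\text{Ext}^{n}_{\mathcal{O}}(R^1\pi_*\mathcal{F},\mathcal{O})$ as a single term of the spectral sequence of Lemma \ref{lem3.10}. I would show that its abutment vanishes by Lemma \ref{lem3.11}, and that every differential into or out of it vanishes. Here $\mathcal{F}$ is understood as a reflexive sheaf on $\widetilde{X}$, and the spectral sequence is written in the form
$$E_2^{i,j}=\text{Ext}^i_{\mathcal{O}}(R^{-j}\pi_*\mathcal{F},\mathcal{O})\Rightarrow \text{Ext}^{i+j}_{\widetilde{\mathcal{O}}}(\mathcal{F},\widetilde{\mathcal{O}}),$$
which is what the Grothendieck duality argument in its proof actually produces. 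The term of interest is $E_2^{n,-1}$, which contributes to $\text{Ext}^{n-1}_{\widetilde{\mathcal{O}}}(\mathcal{F},\widetilde{\mathcal{O}})$. If $n\le 2$ the fibers are finite, so $R^1\pi_*\mathcal{F}=0$ and there is nothing to prove; assume from now on that $n\ge 3$.

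\textbf{Step 1: the abutment vanishes.} Because $X$ is affine and the fibers of $\pi$ have dimension $\le n-2$, for every coherent sheaf $\mathcal{G}$ on $\widetilde{X}$ we get $H^i(\widetilde{X},\mathcal{G})=H^0(X,R^i\pi_*\mathcal{G})=0$ for $i>n-2$. This uses \cite{Hartshorne1977}, III, Corollary 11.2; it extends to quasi-coherent sheaves by taking direct limits. In particular $H^{n-1}$ vanishes identically, so Lemma \ref{lem3.11} applies to the smooth variety $\widetilde{X}$ and gives $\text{Ext}^{n-1}_{\widetilde{\mathcal{O}}}(\mathcal{F},\widetilde{\mathcal{O}})=0$. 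Hence $E_\infty^{n,-1}=0$.

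\textbf{Step 2: the differentials vanish.}

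\emph{Outgoing differentials.} These are maps $d_r\colon E_r^{n,-1}\to E_r^{n+r,-r}$. Since $X$ is affine, $\text{Ext}^i_{\mathcal{O}}(\mathcal{G},\mathcal{O})=H^0(\mathcal{E}xt^i(\mathcal{G},\mathcal{O}))$. For $i>n$ this vanishes, because $\mathcal{O}$ is Gorenstein of dimension $n$ (Lemma \ref{lem2.2}) and so has injective dimension $n$. So every outgoing differential lands in zero.

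\emph{Incoming differentials.} These come from $E_r^{n-r,r-2}$. For $r\ge 3$ we have $j=r-2>0$, and $R^{-j}\pi_*$ is zero for $-j<0$, so the source is zero. For $r=2$ the source is $E_2^{n-2,0}=\text{Ext}^{n-2}_{\mathcal{O}}(\pi_*\mathcal{F},\mathcal{O})$. Since $\pi_*\mathcal{F}$ is maximal Cohen--Macaulay, Lemma \ref{lem3.3} makes its sheaf Exts vanish in positive degrees. With $n-2\ge 1$ and $X$ affine, this global Ext is therefore zero.

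\textbf{Conclusion.} Every differential touching position $(n,-1)$ is zero, so $E_2^{n,-1}=E_\infty^{n,-1}=0$, which is the claim. The only substantive point is Step 1, namely bounding the cohomological dimension of $\widetilde{X}$ by the fiber dimension so that Lemma \ref{lem3.11} can be used. The one thing to check with care is the indexing convention of Lemma \ref{lem3.10}.
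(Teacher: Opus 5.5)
Your proposal is correct and follows essentially the same route as the paper: you isolate $E_2^{n,-1}=\text{Ext}^n_{\mathcal{O}}(R^1\pi_*\mathcal{F},\mathcal{O})$ in the spectral sequence of Lemma \ref{lem3.10}, note that the only possibly nonzero differential touching it is $d_2$ from $E_2^{n-2,0}=\text{Ext}^{n-2}_{\mathcal{O}}(\pi_*\mathcal{F},\mathcal{O})$, and kill the abutment $\text{Ext}^{n-1}_{\widetilde{\mathcal{O}}}(\mathcal{F},\widetilde{\mathcal{O}})$ by bounding the cohomological dimension through the fiber dimension and applying Lemma \ref{lem3.11}. Your extra care only tightens the argument: you write the spectral sequence in the correct direction, dispose of $n\le 2$ separately, and derive $E_2^{n-2,0}=0$ from the maximal Cohen--Macaulay hypothesis via Lemma \ref{lem3.3}, whereas the paper attributes this vanishing to reflexivity, which alone would not suffice.
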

\begin{proof}
In the spectral sequence of Lemma \ref{lem3.10}, the only nontrivial differential related to $\text{Ext}_{\mathcal{O}}^{n}(R^1\pi_*\mathcal{F},\mathcal{O})$ is $d_2^{n-2,0}$ with source $\text{Ext}_{\mathcal{O}}^{n-2}(\pi_*\mathcal{F},\mathcal{O})$. Since $\pi_*\mathcal{F}$ is reflexive, local duality gives us $\text{Ext}_{\mathcal{O}}^{n-2}(\pi_*\mathcal{F},\mathcal{O})=0$. Then one can see that in the spectral sequence, all differentials related to $\text{Ext}_{\mathcal{O}}^{n}(R^1\pi_*\mathcal{F},\mathcal{O})$ vanish, so the convergence gives us a natural injection $\text{Ext}_{\mathcal{O}}^{n}(R^1\pi_*\mathcal{F},\mathcal{O})\hookrightarrow\text{Ext}_{\mathcal{\widetilde{O}}}^{n-1}(\mathcal{F},\mathcal{\widetilde{O}})$. Under the assumption  that the fibers of $\pi$ at closed points have dimension $\leq n-2$, we see that the relative dimension of $\pi$ is less than $n-1$, so $H^{n-1}(\mathcal{G})=R^{n-1}\pi_*\mathcal{G}=0$ for any sheaf $\mathcal{G}$. Then Lemma \ref{lem3.11} gives us $\text{Ext}_{\mathcal{\widetilde{O}}}^{n-1}(\mathcal{F},\mathcal{\widetilde{O}})=0$, hence its sub $\text{Ext}_{\mathcal{O}}^{n}(R^1\pi_*\mathcal{F},\mathcal{O})=0$.
\end{proof}

Now the following proposition characterizes $\overline{M}$.
\begin{proposition}\label{prop3.13}
Suppose $\mathrm{codim}_{\widetilde{X}}\pi^{-1}(X_{\text{sing}})\geq 2$ and the fibers of $\pi$ at closed points have dimension $\leq n-2$.  For a maximal Cohen-Macaulay sheaf $M$ on $X$, we have
\begin{enumerate}
\item For any nonsingular point or isolated singularity $P$ on $X$, $(R^1\pi_*\overline{M})_P=0$.
\item If there is a reflexive sheaf $\mathcal{F}$ such that $\pi_*\mathcal{F}=M$, then $\mathcal{F}=\overline{M}$.
\item $\overline{M}^\vee=\overline{M^\vee}$, hence in particular, $\pi_*(\overline{M}^{\vee})=M^\vee$.
\end{enumerate}
\end{proposition}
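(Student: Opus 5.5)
The plan is to prove the three parts in the order (3), (2), (1), since (3) and (2) are formal consequences of reflexivity and the codimension hypothesis, while (1) needs the duality spectral sequence of Lemma \ref{lem3.10}.

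\textbf{Part (3).} Let $U=X\setminus X_{\text{sing}}$ and $\widetilde U=\pi^{-1}(U)$, so that $\pi$ is an isomorphism $\widetilde U\cong U$. Since $\mathrm{codim}_{\widetilde X}\pi^{-1}(X_{\text{sing}})\ge 2$, a reflexive sheaf on $\widetilde X$ is determined by its restriction to $\widetilde U$: it equals $j_*$ of that restriction, where $j:\widetilde U\hookrightarrow \widetilde X$ is the inclusion. On $\widetilde U$ both $\overline{M}^\vee$ and $\overline{M^\vee}$ restrict to $(M|_U)^\vee=M^\vee|_U$. Both sheaves are reflexive, since a dual is reflexive and a double dual is reflexive. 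Hence they agree. Applying the Lemma preceding Lemma \ref{lem3.7} to the reflexive module $M^\vee$ then gives $\pi_*(\overline{M}^\vee)=\pi_*\overline{M^\vee}=M^\vee$.

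\textbf{Part (2).} The same principle applies. Suppose $\mathcal{F}$ is reflexive with $\pi_*\mathcal{F}=M$. Over $U$ we have $\mathcal{F}|_{\widetilde U}=M|_U=\overline{M}|_{\widetilde U}$. The identification is canonical: the adjunction map $\pi^*\pi_*\mathcal{F}\to\mathcal{F}$, followed by double dual, gives $\overline{M}=(\pi^*M)^{\vee\vee}\to\mathcal{F}^{\vee\vee}=\mathcal{F}$, and this map is an isomorphism on $\widetilde U$. A morphism of reflexive sheaves that is an isomorphism outside codimension $\ge 2$ is an isomorphism, so $\mathcal{F}\cong\overline{M}$.

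\textbf{Part (1).} At a nonsingular point $P$, $\pi$ is an isomorphism over a neighbourhood of $P$ and the statement is trivial. So let $P$ be an isolated singular point. After shrinking $X$ to an affine neighbourhood of $P$, we may assume $R^1\pi_*\overline{M}$ is supported at $P$, since it vanishes over $U$. I then use the spectral sequence of Lemma \ref{lem3.10} with $\mathcal{F}=\overline{M}$, whose $E_2$ terms are $\mathrm{Ext}^i_{\mathcal O}(R^{-j}\pi_*\overline{M},\mathcal O)$. The terms to control are:
\begin{itemize}
\item $\pi_*\overline{M}=M$ is maximal Cohen-Macaulay, so $\mathrm{Ext}^{i}_{\mathcal O}(M,\mathcal O)=0$ for $i>0$.
\item $R^1\pi_*\overline{M}$ has finite length, so by local duality on the Gorenstein ring its only nonzero Ext is $\mathrm{Ext}^n_{\mathcal O}(R^1\pi_*\overline{M},\mathcal O)$, which is its Matlis dual.
\end{itemize}
This is exactly the setting of Lemma \ref{lem3.12} applied to $\overline{M}$; note that its proof only uses that $\overline{M}$ is a reflexive sheaf on $\widetilde X$. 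That proof shows that $\mathrm{Ext}^n_{\mathcal O}(R^1\pi_*\overline{M},\mathcal O)$ injects into $\mathrm{Ext}^{n-1}_{\widetilde{\mathcal O}}(\overline{M},\widetilde{\mathcal O})$. The fiber-dimension hypothesis gives $H^{n-1}=0$ on $\widetilde X$, so Lemma \ref{lem3.11} kills the latter group. The Matlis dual of $R^1\pi_*\overline{M}$ therefore vanishes, and hence $(R^1\pi_*\overline{M})_P=0$.

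\textbf{Main obstacle.} The delicate step is checking that no other differential hits the $\mathrm{Ext}^n(R^1\pi_*\overline{M},\mathcal O)$ position. The $d_2$ from $\mathrm{Ext}^{n-2}(M,\mathcal O)$ vanishes because $M$ is maximal Cohen-Macaulay. Differentials coming from higher $R^q\pi_*\overline{M}$ are the real concern. Since these sheaves are also supported at $P$, their Ext groups are concentrated in degree $n$. That concentration should force every differential into or out of the relevant spot to have zero source or zero target, but this bookkeeping is what I would verify carefully first.
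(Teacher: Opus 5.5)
Your proposal is correct and follows essentially the same route as the paper: (2) and (3) by comparing reflexive sheaves that agree away from the codimension-$\geq 2$ locus $\pi^{-1}(X_{\text{sing}})$, and (1) by the injection $\mathrm{Ext}^n_{\mathcal O}(R^1\pi_*\overline{M},\mathcal O)\hookrightarrow\mathrm{Ext}^{n-1}_{\widetilde{\mathcal O}}(\overline{M},\widetilde{\mathcal O})=0$ from Lemma~\ref{lem3.12}, followed by local duality at the isolated point. The bookkeeping you flag as the main obstacle is automatic: the differentials $d_r\colon E_r^{i,j}\to E_r^{i+r,j-r+1}$ strictly lower the row index, so the rows of $R^q\pi_*\overline{M}$ with $q\geq 2$ never reach row $-1$, and anything leaving column $n$ lands in $\mathrm{Ext}^{>n}_{\mathcal O}=0$ (as $\mathcal O$ is Gorenstein of dimension $n$); hence only the $d_2$ out of $\mathrm{Ext}^{n-2}_{\mathcal O}(M,\mathcal O)=0$ needed checking, which you already did.
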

\begin{proof}
By lemma \ref{lem3.12}, $\text{Ext}_{\mathcal{O}}^{n}(R^1\pi_*\overline{M},\mathcal{O})=0$. By Grothendieck local duality, this means $R^1\pi_*\overline{M}$ does not have sections supported on closed points (since on closed points $P$ we have $H_{\mathfrak{m}_P}^0(R^1\pi_*\overline{M})$ dual to $\text{Ext}_{\mathcal{O}_P}^{n}((R^1\pi_*\overline{M})_P,\mathcal{O}_P)=0$). Since $R^1\pi_*\overline{M}$ vanishes on the smooth locus, we see that it must also vanish on those isolated singularities, so (1) follows.

Now we prove (2). Suppose $\mathcal{F}$ is a reflexive sheaf with $\pi_*\mathcal{F}=M$. Then the isomorphism $M\xrightarrow{\sim}\pi_*\mathcal{F}$ induces a morphism $\pi^*M\rightarrow\mathcal{F}$, the double dual of which gives you a morphism $\overline{M}\rightarrow\mathcal{F}$. By assumption, the exceptional locus has codimension $\geq 2$, so this morphism is an isomorphism except for a codimension $\geq 2$ locus. Since both sheaves are reflexive, this morphism must be an isomorphism. Hence, we have $\mathcal{F}=\overline{M}$. 

The proof of (3) is similar: One has a natural map $\pi^*M^\vee\rightarrow(\pi^*M)^\vee$, and the double dual of this gives $\overline{M^\vee}\rightarrow(\pi^*M)^\vee=\overline{M}^\vee$, which is an isomorphism except for a codimension $\geq 2$ locus. Since both sheaves are reflexive, this is an isomorphism.
\end{proof}
We see that under the assumptions of this proposition, the $M\mapsto\overline{M}$ provides a one to one correspondence between reflexive sheaves on $X$ and the reflexive sheaves on $\widetilde{X}$ whose pushforward is reflexive, and this correspondence is preserved under duality. Furthermore, if $X$ has only isolated singularities, then $R^1\pi_*\overline{M}=0$.

The following proposition is useful for constructing maximal Cohen-Macaulay modules.
\begin{proposition}\label{prop3.14}
Suppose $\mathcal{F}$ is a vector bundle on $\widetilde{X}$ such that
$$R^{>0}\pi_*\mathcal{F}=R^{>0}\pi_*\mathcal{F}^\vee=0$$
then $M=\pi_*\mathcal{F}$ is maximal Cohen-Macaulay.
\end{proposition}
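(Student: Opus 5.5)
The plan is to use Grothendieck duality, in the form of Lemma \ref{lem3.10}, applied to $\mathcal{F}$, and then to check the criterion of Lemma \ref{lem3.3}: on the Gorenstein variety $X$ (Lemma \ref{lem2.2}), $M$ is maximal Cohen-Macaulay if and only if $\mathcal{E}xt^i_{\mathcal{O}}(M,\mathcal{O})=0$ for all $i>0$. Since $X$ is affine, this is the same as $\mathrm{Ext}^i_{\mathcal{O}}(M,\mathcal{O})=0$ for $i>0$.

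First, since $R^{>0}\pi_*\mathcal{F}=0$, we have $\mathbf{R}\pi_*\mathcal{F}\simeq \pi_*\mathcal{F}=M$, concentrated in degree $0$. The spectral sequence of Lemma \ref{lem3.10} then has a single nonzero row $j=0$, so it degenerates. Equivalently, working directly with the derived-category identity from its proof, twisted by $[-n]$ and using $\pi^!\mathcal{O}=\widetilde{\mathcal{O}}$ (crepancy plus Gorenstein), we get
\[
\mathbf{R}\mathrm{Hom}_{\mathcal{O}}(M,\mathcal{O})\simeq \mathbf{R}\Gamma\big(\widetilde{X},\mathbf{R}\mathcal{H}om_{\widetilde{\mathcal{O}}}(\mathcal{F},\widetilde{\mathcal{O}})\big).
\]
I will use this isomorphism in place of the spectral sequence as written, since the roles of $X$ and $\widetilde{X}$ in the statement of Lemma \ref{lem3.10} look interchanged.

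Second, because $\mathcal{F}$ is a vector bundle, $\mathbf{R}\mathcal{H}om_{\widetilde{\mathcal{O}}}(\mathcal{F},\widetilde{\mathcal{O}})=\mathcal{F}^\vee$, again concentrated in degree $0$. Hence
\[
\mathrm{Ext}^i_{\mathcal{O}}(M,\mathcal{O})\cong H^i(\widetilde{X},\mathcal{F}^\vee).
\]
Since $X$ is affine, $H^i(\widetilde{X},\mathcal{F}^\vee)=\Gamma(X,R^i\pi_*\mathcal{F}^\vee)$, which vanishes for $i>0$ by the hypothesis on $\mathcal{F}^\vee$. Lemma \ref{lem3.3} then gives that $M$ is maximal Cohen-Macaulay. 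As a by-product, the case $i=0$ gives $M^\vee=\pi_*\mathcal{F}^\vee$.

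The main obstacle is justifying the duality step, namely that $\pi^!\mathcal{O}_X\simeq\widetilde{\mathcal{O}}$ without any shift discrepancy. For this I would argue that $\pi$ is proper, so $\pi^!$ carries the dualizing complex $\omega_X[n]=\mathcal{O}[n]$ to the dualizing complex $\omega_{\widetilde{X}}[n]$ of $\widetilde{X}$, and crepancy gives $\omega_{\widetilde{X}}=\pi^*\omega_X=\widetilde{\mathcal{O}}$. The rest of the argument is formal. I would also remark that the spectral-sequence formulation is only needed to pass from $\mathbf{R}\pi_*\mathcal{F}$ to $M$, and that step is immediate from the vanishing of $R^{>0}\pi_*\mathcal{F}$.
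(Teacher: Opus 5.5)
Your proposal is correct and follows essentially the same route as the paper: because $R^{>0}\pi_*\mathcal{F}=0$, the duality of Lemma \ref{lem3.10} collapses to $\mathrm{Ext}^i_{\mathcal{O}}(M,\mathcal{O})\cong\mathrm{Ext}^i_{\widetilde{\mathcal{O}}}(\mathcal{F},\widetilde{\mathcal{O}})=H^i(\mathcal{F}^\vee)$, which vanishes for $i>0$ by the hypothesis on $\mathcal{F}^\vee$, and Lemma \ref{lem3.3} finishes. You are right that Lemma \ref{lem3.10} as stated has $\mathcal{O}$ and $\widetilde{\mathcal{O}}$ swapped; your corrected direction is the one the paper's proof actually uses.
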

\begin{proof}
If $R^{>0}\pi_*\mathcal{F}$, we see that the spectral sequence in Lemma \ref{lem3.10} collapses to the $0^{th}$ column, so it gives you $\text{Ext}_{\mathcal{O}}^i(M,\mathcal{O})=\text{Ext}_{\mathcal{O}}^i(\pi_*\mathcal{F},\mathcal{O})=\text{Ext}^i_\mathcal{\widetilde{O}}(\mathcal{F},\mathcal{\widetilde{O}})=H^i(\mathcal{F}^\vee)=0$. Since $X$ is affine, by Lemma \ref{lem3.3} we see that $M$ is maximal Cohen-Macaulay.
\end{proof}

\section{Constructions on $T^*\mathbb{P}^2\rightarrow \mathcal{N}_{3,1}$}\label{S4}
We have seen many general constructions of reflexive/maximal Cohen-Macaulay sheaves on symplectic singularities in the previous section. Now we try to apply our general theory to specific resolutions. As the 2-dimensional case, the ADE singularities, has been fully explained in \cite{ArtinVerdierEsnault1985}, we apply our general theory to the simplest higher dimensional singularity: the subregular Springer resolution $T^*\mathbb{P}^2\rightarrow \mathcal{N}_{3,1}$, where $\mathcal{N}_{3,1}$ is the variety parameterizing $3\times 3$ nilpotent matrices with rank $\leq 1$.
\subsection{Characterization of maximal Cohen-Macaulay condition}
Throughout this section, we continue our tradition of denoting by $\pi$ the resolution $T^*\mathbb{P}^2\rightarrow \mathcal{N}_{3,1}$, and by $\mathcal{O}$ and $\widetilde{\mathcal{O}}$ the structure sheaves of $T^*\mathbb{P}^2$ and $\mathcal{N}_{3,1}$, respectively. Since the conjugation action of $GL(3,\mathbb{C})$ is transitive on those nilpotent $3\times 3$ matrices with rank 1, by generic smoothness, we see that the zero matrix, which we always denote by $0$, is the only singularity on $\mathcal{N}_{3,1}$, and the exceptional fiber is exactly the zero section of $T^*\mathbb{P}^2$.

To consider maximal Cohen-Macaulay modules, we first need a lemma
\begin{lemma}
Suppose $\mathcal{F}$ is a reflexive sheaf on $T^*\mathbb{P}^2$. The spectral sequence in Lemma \ref{lem3.10} gives a map $(R^2\pi_*\mathcal{F})'\rightarrow\text{Ext}^2_\mathcal{\widetilde{O}}(\mathcal{F},\mathcal{\widetilde{O}})$.
\end{lemma}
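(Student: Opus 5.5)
Here $n=4$, the fibers of $\pi$ have dimension at most $2$, and $R^{j}\pi_*\mathcal{F}=0$ for $j\ge 3$; I read $(R^2\pi_*\mathcal{F})'$ as $\text{Ext}^4_{\mathcal{O}}(R^2\pi_*\mathcal{F},\mathcal{O})$, as in Proposition \ref{prop1.1}. In the spectral sequence of Lemma \ref{lem3.10} (with the $\text{Ext}$'s in the $E_2$ term over $\mathcal{O}$ on $X$ and the abutment over $\widetilde{\mathcal{O}}$, as in its proof), only the rows $j=0,-1,-2$ are nonzero. The map will be the composite of the edge map from $E_\infty^{4,-2}$ into the abutment in total degree $2$, preceded by the quotient map $E_2^{4,-2}\twoheadrightarrow E_\infty^{4,-2}$.

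The key steps are as follows. First, $R^1\pi_*\mathcal{F}$ and $R^2\pi_*\mathcal{F}$ are supported on $\{0\}$, since $\pi$ is an isomorphism away from the zero section. Hence $\text{Ext}^i_{\mathcal{O}}(R^j\pi_*\mathcal{F},\mathcal{O})=0$ for $i\neq 4$ and $j=1,2$, by local duality on the $4$-dimensional Gorenstein ring. So rows $-1$ and $-2$ are concentrated in column $4$. Second, I trace the differentials in and out of $E^{4,-2}$. Outgoing differentials land in columns $\ge 6$, so they vanish. Incoming differentials are $d_2$ from $E_2^{2,-1}=0$ and $d_3$ from $E_3^{1,0}$, a subquotient of $\text{Ext}^1_{\mathcal{O}}(\pi_*\mathcal{F},\mathcal{O})$; all higher incoming differentials come from negative columns. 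Hence $E_\infty^{4,-2}=\operatorname{coker}(d_3\colon E_3^{1,0}\to E_3^{4,-2})$ with $E_3^{4,-2}=E_2^{4,-2}$. In total degree $2$, this is the highest-column piece of the filtration, so it is a subobject of $\text{Ext}^2_{\widetilde{\mathcal{O}}}(\mathcal{F},\widetilde{\mathcal{O}})$.

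Composing gives $(R^2\pi_*\mathcal{F})'\twoheadrightarrow E_\infty^{4,-2}\hookrightarrow \text{Ext}^2_{\widetilde{\mathcal{O}}}(\mathcal{F},\widetilde{\mathcal{O}})$. To make the map cleaner, I will use the observation that $\pi_*\mathcal{F}$ is reflexive, as stated in Proposition \ref{prop1.1}, so it has depth $\ge 2$. Local duality then gives $\text{Ext}^{3}_{\mathcal{O}}(\pi_*\mathcal{F},\mathcal{O})=\text{Ext}^4_{\mathcal{O}}(\pi_*\mathcal{F},\mathcal{O})=0$ locally at $0$. This is not needed for existence, but I will note that $d_3$ from column $1$ is the only obstruction to injectivity.

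The main obstacle is the bookkeeping of the filtration on the abutment: checking that $E_\infty^{4,-2}$ really sits as a \emph{sub}object rather than a quotient under the sign conventions of Lemma \ref{lem3.10}. I will settle this by noting that in a first-quadrant-type Grothendieck spectral sequence arising from $\mathbf{R}\text{Hom}(\mathbf{R}\pi_*\mathcal{F},-)$, the filtration is by column index, and the piece with the largest $i$ is the smallest filtration step.
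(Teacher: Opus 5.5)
Your proposal is correct and follows the paper's proof: the map is the surjection $E_2^{4,-2}=\text{Ext}^4_{\mathcal{O}}(R^2\pi_*\mathcal{F},\mathcal{O})\twoheadrightarrow E_\infty^{4,-2}$ (no differentials leave this spot), followed by the inclusion of the top filtration piece $E_\infty^{4,-2}\hookrightarrow\text{Ext}^2_{\widetilde{\mathcal{O}}}(\mathcal{F},\widetilde{\mathcal{O}})$, and your identification $E_\infty^{4,-2}=\operatorname{coker}(d_3)$ is exactly what the paper later uses in Proposition \ref{prop4.2}. The one slip is cosmetic: the incoming $d_4$ has source $E_4^{0,1}$, which vanishes because row $1$ is zero, not because its column is negative.
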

\begin{proof}
First, Since the map $T^*\mathbb{P}^2\rightarrow \mathcal{N}_{3,1}=\text{Spec}\,\mathcal{O}$ is projective and $\mathcal{F}$ is coherent, we see that $R^2\pi_*\mathcal{F}=H^2(\mathcal{F})$ is a finitely generated $\mathcal{O}$-module by \cite{Hartshorne1977} III, Theorem 5.2. One can also see that they are supported only at the singular point 0, so it is a finite dimensional $\mathbb{C}$ vector space. Now Grothendieck local duality (cf. \cite{HartshorneRD} V.Corollary 6.5) gives $\text{Ext}^4_\mathcal{O}(R^1\pi_*\mathcal{F},\mathcal{O})\cong (R^2\pi_*\mathcal{F})'$, so we can identify $(R^2\pi_*\mathcal{F})'$ with $\text{Ext}^4_\mathcal{O}(R^1\pi_*\mathcal{F},\mathcal{O})$.

Since the relative dimension of $\pi$ is 2, we see that the spectral sequence has only three rows $j=0,-1,-2$ and five columns $i=0,1,2,3,4$. Then one can easily see that the spectral sequence stabilizes after the fourth step. 

Now there is no nontrivial differential pointing from $E_r^{4,-2}$; we see that the stable object $E_4^{4,-2}$ is a quotient of $\text{Ext}^4_\mathcal{O}(R^2\pi_*\mathcal{F},\mathcal{O})$. Now the convergence gives you an injection $E_4^{4,-3}\hookrightarrow\text{Ext}^2_\mathcal{\widetilde{O}}(\mathcal{F},\mathcal{\widetilde{O}})$. Composing with the quotient $ \text{Ext}^4_\mathcal{O}(R^2\pi_*\mathcal{F},\mathcal{O})\twoheadrightarrow E_4^{4,-2}$, we get the required natural map.
\end{proof}
Now we can give the full characterization of maximal Cohen-Macaulay sheaves on $\mathcal{N}_{3,1}$ using reflexive sheaves on $T^*\mathbb{P}^2$.
\begin{proposition}\label{prop4.2}
Suppose $\mathcal{F}$ is a reflexive sheaf on $T^*\mathbb{P}^2$. Then $\pi_*\mathcal{F}$ is reflexive. It is maximal Cohen-Macaulay if and only if
\begin{enumerate}
\item $R^1\pi_*\mathcal{F}=0$.
\item $\text{Ext}^1_\mathcal{\widetilde{O}}(\mathcal{F},\mathcal{\widetilde{O}})=0$.
\item The map $ (R^2\pi_*\mathcal{F})'=\text{Ext}^4_\mathcal{O}(R^2\pi_*\mathcal{F},\mathcal{O})\rightarrow\text{Ext}^2_\mathcal{\widetilde{O}}(\mathcal{F},\mathcal{\widetilde{O}})$ is an isomorphism.
\end{enumerate}
\end{proposition}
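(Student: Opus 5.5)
Reflexivity of $\pi_*\mathcal{F}$ comes first. Since $\pi$ is an isomorphism away from the zero section (codimension $2$ in $T^*\mathbb{P}^2$) and $0$ has codimension $4$ in $\mathcal{N}_{3,1}$, the sheaf $\pi_*\mathcal{F}$ agrees with the reflexive sheaf $\mathcal{F}$ on $U=\mathcal{N}_{3,1}\setminus\{0\}$. Sections of $\mathcal{F}$ over $\pi^{-1}(U)$ extend across the codimension-$2$ zero section because $\mathcal{F}$ is reflexive. Hence $\pi_*\mathcal{F}=j_*(\mathcal{F}|_U)$ with $j\colon U\hookrightarrow\mathcal{N}_{3,1}$, and $\pi_*\mathcal{F}$ is torsion free and $S_2$. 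By normality of $\mathcal{N}_{3,1}$ it is therefore reflexive.

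For the equivalence, I use Lemma \ref{lem3.3}: $M=\pi_*\mathcal{F}$ is maximal Cohen-Macaulay iff $\text{Ext}^i_{\mathcal{O}}(M,\mathcal{O})=0$ for $i=1,\dots,4$. On $U$ the sheaf $M$ is locally free away from a codimension-$\ge3$ locus of $\mathcal{F}$. In fact $\mathcal{E}xt^i(M,\mathcal{O})$ for $i\ge1$ is supported at $0$ plus the non-locally-free locus of $\mathcal{F}$ off the zero section. I will take $\mathcal{F}$ to be locally free off the zero section, or else fold that into the argument via Lemma \ref{lem3.11}-type local computations. Then everything is controlled by the spectral sequence of Lemma \ref{lem3.10}, with columns $\text{Ext}^i_{\mathcal{O}}(R^{-j}\pi_*\mathcal{F},\mathcal{O})$ for $j=0,-1,-2$. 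Since $R^1\pi_*\mathcal{F}$ and $R^2\pi_*\mathcal{F}$ are supported at the point $0$, local duality gives that their $\text{Ext}^i$ vanish for $i\neq4$. So the rows $j=-1,-2$ contain only the entries $E_2^{4,-1}=(R^1\pi_*\mathcal{F})'$ and $E_2^{4,-2}=(R^2\pi_*\mathcal{F})'$.

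The page-$2$ differentials $d_2^{2,0}\colon E^{2,0}\to E^{4,-1}$ and the $d_3$ from $E^{1,0}$ to $E^{4,-2}$ are the only ones. The abutment is $\text{Ext}^k_{\widetilde{\mathcal{O}}}(\mathcal{F},\widetilde{\mathcal{O}})$, which vanishes for $k\ge3$ by Lemma \ref{lem3.11} and the fact that fibres have dimension $2$. Reading off degrees:
\begin{itemize}
\item degree $1$ receives $E_\infty^{1,0}$;
\item degree $2$ receives $E_\infty^{2,0}$ and $E_\infty^{4,-2}$;
\item degree $3$ receives $E_\infty^{3,0}$ and $E_\infty^{4,-1}$;
\item degree $4$ receives $E_\infty^{4,0}$.
\end{itemize}

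For the direction ($\Rightarrow$), if all $\text{Ext}^{i>0}_{\mathcal{O}}(M,\mathcal{O})$ vanish, the sequence degenerates. Then $(R^1\pi_*\mathcal{F})'=E^{4,-1}_\infty\subset\text{Ext}^3_{\widetilde{\mathcal{O}}}(\mathcal{F},\widetilde{\mathcal{O}})=0$, which gives (1) via Lemma \ref{lem3.12}-style duality. Next, $\text{Ext}^1_{\widetilde{\mathcal{O}}}(\mathcal{F},\widetilde{\mathcal{O}})=E^{1,0}_\infty=0$, which gives (2). Finally, $\text{Ext}^2_{\widetilde{\mathcal{O}}}(\mathcal{F},\widetilde{\mathcal{O}})=E_\infty^{4,-2}=(R^2\pi_*\mathcal{F})'$, and this is exactly the map in (3).

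For ($\Leftarrow$), assume (1)–(3). With $R^1\pi_*\mathcal{F}=0$ the row $j=-1$ disappears. The map in (3) factors as the quotient $(R^2\pi_*\mathcal{F})'\twoheadrightarrow E_\infty^{4,-2}$ followed by the inclusion into $\text{Ext}^2_{\widetilde{\mathcal{O}}}(\mathcal{F},\widetilde{\mathcal{O}})$. Its injectivity forces $d_3\colon E^{1,0}\to E^{4,-2}$ to be zero. Its surjectivity forces $E_\infty^{2,0}=0$. Since $E^{2,0}$ is not hit by any differential, this gives $\text{Ext}^2_{\mathcal{O}}(M,\mathcal{O})=0$. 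Because $d_3=0$, we get $E^{1,0}=E^{1,0}_\infty\subset\text{Ext}^1_{\widetilde{\mathcal{O}}}=0$ by (2). For degrees $3$ and $4$, we have $E^{3,0}=E_\infty^{3,0}\subset\text{Ext}^3_{\widetilde{\mathcal{O}}}=0$ and likewise $E^{4,0}\subset\text{Ext}^4_{\widetilde{\mathcal{O}}}=0$, since no differentials touch them once row $-1$ is gone.

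The main obstacle is the precise bookkeeping of the filtration in degree $2$, namely identifying which graded piece is the sub and which is the quotient. The lemma before the proposition places $E_\infty^{4,-2}$ as a subobject of $\text{Ext}^2_{\widetilde{\mathcal{O}}}(\mathcal{F},\widetilde{\mathcal{O}})$, with $E_\infty^{2,0}$ as the corresponding quotient. I will check this against the standard convergence filtration before relying on it. A second point needing care is the vanishing of $\text{Ext}^{3,4}_{\widetilde{\mathcal{O}}}(\mathcal{F},\widetilde{\mathcal{O}})$ from Lemma \ref{lem3.11} on $T^*\mathbb{P}^2$, which has cohomological dimension $2<3$.
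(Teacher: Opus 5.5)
Your proof is correct, and for the equivalence it follows the paper's argument. Both use the spectral sequence of Lemma~\ref{lem3.10}, local duality to confine rows $j=-1,-2$ to column $4$, and Lemma~\ref{lem3.11} for $\text{Ext}^{3}_{\widetilde{\mathcal{O}}}(\mathcal{F},\widetilde{\mathcal{O}})=\text{Ext}^{4}_{\widetilde{\mathcal{O}}}(\mathcal{F},\widetilde{\mathcal{O}})=0$. The rest is bookkeeping with the only two differentials $d_2^{2,0}$ and $d_3^{1,0}$. Your derivation of $R^1\pi_*\mathcal{F}=0$ from $E_\infty^{4,-1}\subset\text{Ext}^3_{\widetilde{\mathcal{O}}}(\mathcal{F},\widetilde{\mathcal{O}})=0$ is the paper's ``$d_2$ is surjective'' in another form.

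The genuine difference is the reflexivity of $\pi_*\mathcal{F}$. You use Hartogs extension across the codimension-$2$ zero section to get $\pi_*\mathcal{F}=j_*(\mathcal{F}|_U)$, then conclude by $S_2$ and normality. This is elementary and needs only properness and that $\pi$ is an isomorphism outside codimension $\geq 2$ on both sides. The paper instead reads off degree $0$ of the same spectral sequence applied to $\mathcal{F}^\vee$, giving $(\pi_*\mathcal{F}^\vee)^\vee=\text{Hom}(\mathcal{F}^\vee,\widetilde{\mathcal{O}})=\pi_*\mathcal{F}$. So $\pi_*\mathcal{F}$ is a dual module; this stays inside the duality formalism and, applied to $\mathcal{F}$ itself, also yields $(\pi_*\mathcal{F})^\vee=\pi_*\mathcal{F}^\vee$.

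Your caveat about taking $\mathcal{F}$ locally free off the zero section can be dropped. Lemma~\ref{lem3.3} on the affine $\mathcal{N}_{3,1}$ and the abutment of Lemma~\ref{lem3.10} are global $\text{Ext}$ groups, so they already see any non-locally-free locus of $\mathcal{F}$, and nothing in your argument uses the restriction.

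Your orientation of the degree-$2$ filtration is correct. $E_\infty^{4,-2}$ is the image of $\text{Ext}^4_{\mathcal{O}}(R^2\pi_*\mathcal{F},\mathcal{O})\to\text{Ext}^2_{\widetilde{\mathcal{O}}}(\mathcal{F},\widetilde{\mathcal{O}})$, the map coming from the triangle $\tau_{\le 1}\mathbf{R}\pi_*\mathcal{F}\to\mathbf{R}\pi_*\mathcal{F}\to R^2\pi_*\mathcal{F}[-2]$. Hence it is the sub and $E_\infty^{2,0}$ is the quotient, as in the lemma preceding the proposition. The short exact sequences displayed in the paper's proof put sub and quotient the other way round. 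It is your orientation that makes the step ``(3) forces $\mathrm{im}\,d_3=0$ and $E_\infty^{2,0}=0$'' go through.
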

\begin{proof}
Now suppose $M=\pi_*\mathcal{F}$. Consider the spectral sequence in Lemma \ref{lem3.10}. It has five columns $i=0,1,2,3,4$ and three rows $j=0,-1,-2$. Again by projectivity of $\pi$, since $R^1\pi_*\mathcal{F}$ and $R^1\pi_*\mathcal{F}$ are both supported only on 0, they are finite-dimensional $\mathbb{C}$-vector spaces. Then, since $\mathcal{O}$ is Cohen-Macaulay, Applying \cite{Weibel1994} Theorem 4.4.8 to a simple filtration of $R^1\pi_*\mathcal{F}$ implies $\text{Ext}^i_\mathcal{O}(R^1\pi_*\mathcal{F},\mathcal{O})=\text{Ext}^i_\mathcal{O}(R^2\pi_*\mathcal{F},\mathcal{O})=0$ for $i=0,1,2,3$

With the cohomology vanishing discussed above, the spectral sequence in Lemma \ref{lem3.10} looks like
$$
\begin{tikzcd}[column sep=1.2em, row sep=0.9em]
(\pi_*\mathcal{F})^\vee 
& \text{Ext}^1_\mathcal{O}(\pi_*\mathcal{F},\mathcal{O})\arrow[rrrdd,"d_3"]
& \text{Ext}^2_\mathcal{O}(\pi_*\mathcal{F},\mathcal{O})\arrow[rrd,"d_2"]
& \text{Ext}^3_\mathcal{O}(\pi_*\mathcal{F},\mathcal{O})
& \text{Ext}^4_\mathcal{O}(\pi_*\mathcal{F},\mathcal{O}) \\
0 & 0 & 0 & 0 & \text{Ext}^4_\mathcal{O}(R^1\pi_*\mathcal{F},\mathcal{O}) \\
0 & 0 & 0 & 0 & \text{Ext}^4_\mathcal{O}(R^2\pi_*\mathcal{F},\mathcal{O})
\end{tikzcd}
$$
The convergence gives you $\text{Ext}^4_\mathcal{\widetilde{O}}(\mathcal{F},\mathcal{\widetilde{O}})=\text{Ext}^4_\mathcal{O}(\pi_*\mathcal{F},\mathcal{O})$
, $\text{Ext}^1_\mathcal{\widetilde{O}}(\mathcal{F},\mathcal{\widetilde{O}})=\ker d_3$, $(\pi_*\mathcal{F})^\vee=\text{Hom}(\mathcal{F},\mathcal{\widetilde{O}})=\pi_*\mathcal{F}^\vee$, and two short exact sequences
\begin{align}\notag
0\rightarrow\ker d_2\rightarrow\text{Ext}^2_\mathcal{\widetilde{O}}(\mathcal{F},\mathcal{\widetilde{O}})&\rightarrow\text{Ext}^4_\mathcal{O}(R^2\pi_*\mathcal{F},\mathcal{O})/\text{im}\,d_3\rightarrow 0\ ,\\\notag
0\rightarrow\text{Ext}^3_\mathcal{O}(\pi_*\mathcal{F},\mathcal{O})\rightarrow\text{Ext}^3_\mathcal{\widetilde{O}}(&\mathcal{F},\mathcal{\widetilde{O}})\rightarrow\text{Ext}^4_\mathcal{O}(R^1\pi_*\mathcal{F},\mathcal{O})/\text{im}\,d_2\rightarrow 0
\end{align}

Since we are working on any reflexive sheaf $\mathcal{F}$, those relations also hold for $\mathcal{F}^\vee$. In particular, $(\pi_*\mathcal{F}^\vee)^\vee=\text{Hom}(\mathcal{F}^\vee,\mathcal{\widetilde{O}})=\pi_*\mathcal{F}^{\vee\vee}=\pi_*\mathcal{F}$, so $\pi_*\mathcal{F}$ is reflexive.
Now $\mathcal{F}$ is reflexive. This proves the first statement.

Since $\mathcal{F}$ is reflexive, by Lemma \ref{lem3.11}, $\text{Ext}^3_\mathcal{\widetilde{O}}(\mathcal{F},\mathcal{\widetilde{O}})=\text{Ext}^4_\mathcal{\widetilde{O}}(\mathcal{F},\mathcal{\widetilde{O}})=0$. Then, by the relations above, it gives you $\text{Ext}^3_\mathcal{O}(\pi_*\mathcal{F},\mathcal{O})=\text{Ext}^4_\mathcal{O}(\pi_*\mathcal{F},\mathcal{O})=0$ and $d_2$ surjective. Hence, our spectral sequence reduces again

$$
\begin{tikzcd}[column sep=1.2em, row sep=0.9em]
(\pi_*\mathcal{F})^\vee 
& \text{Ext}^1_\mathcal{O}(\pi_*\mathcal{F},\mathcal{O})\arrow[rrrdd,"d_3"]
& \text{Ext}^2_\mathcal{O}(\pi_*\mathcal{F},\mathcal{O})\arrow[rrd,"d_2",two heads]
& 0
& 0 \\
0 & 0 & 0 & 0 & \text{Ext}^4_\mathcal{O}(R^1\pi_*\mathcal{F},\mathcal{O}) \\
0 & 0 & 0 & 0 & \text{Ext}^4_\mathcal{O}(R^2\pi_*\mathcal{F},\mathcal{O})
\end{tikzcd}
$$

By Lemma \ref{lem3.3}, in this case, since we already have $\text{Ext}^3_\mathcal{\widetilde{O}}(\mathcal{F},\mathcal{\widetilde{O}})=\text{Ext}^4_\mathcal{\widetilde{O}}(\mathcal{F},\mathcal{\widetilde{O}})=0$, we see that $\pi_*\mathcal{F}$ is maximal Cohen-Macaulay if and only if $\text{Ext}^1_\mathcal{O}(\pi_*\mathcal{F},\mathcal{O})=\text{Ext}^2_\mathcal{O}(\pi_*\mathcal{F},\mathcal{O})=0$. Now we prove the equivalence between this and the conditions in this proposition.

Suppose first $\text{Ext}^1_\mathcal{O}(\pi_*\mathcal{F},\mathcal{O})=\text{Ext}^2_\mathcal{O}(\pi_*\mathcal{F},\mathcal{O})=0$. Since $d_2$ is surjective, we see that $\text{Ext}^4_\mathcal{O}(R^1\pi_*\mathcal{F},\mathcal{O})=0$. Since $R^1\pi_*\mathcal{F}$ is supported only at the closed point $0$, by Grothendieck local duality, we have $R^1\pi_*\mathcal{F}=0$, which gives (1). $\text{Ext}^1_\mathcal{\widetilde{O}}(\mathcal{F},\mathcal{\widetilde{O}})=\ker d_3\subseteq 0$, so $\text{Ext}^1_\mathcal{\widetilde{O}}(\mathcal{F},\mathcal{\widetilde{O}})=0$, which gives (2). Finally, (3) follows by the first short exact sequence above and that $\ker d_2=\text{im}\,d_3=0$, since they are all subquotients of $\text{Ext}^1_\mathcal{O}(\pi_*\mathcal{F},\mathcal{O})\ \text{or}\ \text{Ext}^2_\mathcal{O}(\pi_*\mathcal{F},\mathcal{O})$.

Now suppose conversely (1), (2), (3) hold.  By (3) and the first short exact sequence, we see that $\ker d_2=\text{im}\,d_3=0$. (1) implies $\text{Ext}^4_\mathcal{O}(R^1\pi_*\mathcal{F},\mathcal{O})=0$, so $\text{im}\,d_2=0$. (2) implies $\ker d_3=\text{Ext}^1_\mathcal{\widetilde{O}}(\mathcal{F},\mathcal{\widetilde{O}})$. Then one sees that $\text{Ext}^1_\mathcal{O}(\pi_*\mathcal{F},\mathcal{O})=\text{Ext}^2_\mathcal{O}(\pi_*\mathcal{F},\mathcal{O})=0$, completing the proof.
\end{proof}

This proposition allows us to turn the classification or construction problems of maximal Cohen-Macaulay sheaves on the singular variety $\mathcal{N}_{3,1}$ into the respective problems of reflexive sheaves on the smooth variety $T^*\mathbb{P}^2$ that satisfy conditions (1),(2),(3)
. It plays a similar role to Lemma 1.1 in \cite{ArtinVerdierEsnault1985}, while it is more complicated as we are no longer working on surface but on higher dimensional symplectic singularities.

For vector bundles on $T^*\mathbb{P}^2$, the conditions in Proposition \ref{prop4.2} are cleaner.
\begin{corollary}
For a vector bundle $\mathcal{F}$ on $T^*\mathbb{P}^2$, $\pi_*\mathcal{F}$ is maximal Cohen-Macaulay if and only if 
\begin{enumerate}
\item $R^1\pi_*\mathcal{F}=R^1\pi_*\mathcal{F}^\vee=0$
\item The map induced by the spectral sequence $ (R^2\pi_*\mathcal{F})'\rightarrow R^2\pi_*\mathcal{F}^\vee$ is an isomorphism.
\end{enumerate}
\end{corollary}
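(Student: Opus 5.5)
We deduce the corollary from Proposition \ref{prop4.2}, using that for a vector bundle $\mathcal{F}$ the global Ext groups on $\widetilde{X}=T^*\mathbb{P}^2$ reduce to sheaf cohomology. Since $\mathcal{F}$ is locally free, $\mathcal{E}xt^j_{\widetilde{\mathcal{O}}}(\mathcal{F},\widetilde{\mathcal{O}})=0$ for $j>0$. The local-to-global spectral sequence therefore collapses and gives
$$\text{Ext}^i_{\widetilde{\mathcal{O}}}(\mathcal{F},\widetilde{\mathcal{O}})=H^i(\mathcal{F}^\vee)=R^i\pi_*\mathcal{F}^\vee,$$
where the last equality holds because $\mathcal{N}_{3,1}$ is affine. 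A vector bundle is reflexive, so Proposition \ref{prop4.2} applies directly. Its condition (2) becomes $R^1\pi_*\mathcal{F}^\vee=0$, which combines with condition (1) into the present condition (1). Its condition (3) becomes the statement that the spectral-sequence map $(R^2\pi_*\mathcal{F})'\to R^2\pi_*\mathcal{F}^\vee$ is an isomorphism, which is the present condition (2). This proves both directions at once.

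The one point needing care is naturality of the identification in condition (3). The map in Proposition \ref{prop4.2}(3) lands in $\text{Ext}^2_{\widetilde{\mathcal{O}}}(\mathcal{F},\widetilde{\mathcal{O}})$. We must check that composing it with the canonical isomorphism $\text{Ext}^2_{\widetilde{\mathcal{O}}}(\mathcal{F},\widetilde{\mathcal{O}})\cong H^2(\mathcal{F}^\vee)$ gives what we call the ``map induced by the spectral sequence''. This is essentially a definition: we define the map in the corollary as this composite.

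We also check symmetry, since condition (1) treats $\mathcal{F}$ and $\mathcal{F}^\vee$ symmetrically while condition (2) does not. Symmetry is not needed for the equivalence. As a consistency check, though, when conditions (1) and (2) hold, $\pi_*\mathcal{F}$ is maximal Cohen--Macaulay. Then $\pi_*\mathcal{F}^\vee=(\pi_*\mathcal{F})^\vee$ is maximal Cohen--Macaulay as well, by Proposition \ref{prop3.13}(3) and the duality argument in Lemma \ref{lem3.7}. So applying the corollary to $\mathcal{F}^\vee$ gives a consistent conclusion.

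I expect no real obstacle here. The main care points are correctly invoking the vanishing of local $\mathcal{E}xt$ for locally free sheaves and keeping the identification of the map in (3) canonical.
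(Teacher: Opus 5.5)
Your proposal is correct and is the argument the paper intends. The paper gives no separate proof: it only remarks that for vector bundles the conditions of Proposition \ref{prop4.2} become cleaner, via the identification $\text{Ext}^i_{\widetilde{\mathcal{O}}}(\mathcal{F},\widetilde{\mathcal{O}})=H^i(\mathcal{F}^\vee)=R^i\pi_*\mathcal{F}^\vee$ already used in the proof of Proposition \ref{prop3.14}, and your symmetry paragraph is harmless but unnecessary.
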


\subsection{Bundles on $\mathbb{P}^2$ pushing forward to maximal Cohen-Macaulay sheaves}
For construction purpose, we are more interested in the conditions described in Proposition \ref{prop3.14}, that is, vector bundles with $R^{>0}\pi_*\mathcal{F}=R^{>0}\pi_*\mathcal{F}^\vee=0$. To construct such vector bundles on $T*\mathbb{P}^2$, it's natural to consider the vector bundles pulled back from $\mathbb{P}^2$. The following proposition characterizes those sheaves satisfying our condition. 
\begin{proposition}\label{prop4.4}
Suppose $f:T^*\mathbb{P}^2\rightarrow\mathbb{P}^2$ is the projection map and $\mathcal{O}_{\mathbb{P}^2}(1)$ is the sheaf of linear forms on $\mathbb{P}^2$. Then, for any vector bundle $E$ on $\mathbb{P}^2$, we have
$$R^{>0}\pi_*f^*E=0 \Longleftrightarrow H^{>0}(E(n))=0\ \ \text{for} \ \ n\geq 0
$$
\end{proposition}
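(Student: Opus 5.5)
Since $\mathcal{N}_{3,1}=\text{Spec}\,\mathcal{O}$ is affine, $R^i\pi_*f^*E$ is the module associated to $H^i(T^*\mathbb{P}^2,f^*E)$. So the statement reduces to showing that $H^{>0}(T^*\mathbb{P}^2,f^*E)=0$ if and only if $H^{>0}(\mathbb{P}^2,E(n))=0$ for all $n\ge 0$. The plan is to compute these groups by pushing forward along the affine morphism $f$ instead of $\pi$.

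\textbf{Step 1: reduce to $\mathbb{P}^2$.} The map $f$ is the bundle projection $T^*\mathbb{P}^2=\mathbf{Spec}_{\mathbb{P}^2}\,\mathrm{Sym}^\bullet(T_{\mathbb{P}^2})\to\mathbb{P}^2$, so it is affine. Hence $R^{>0}f_*=0$, and the projection formula gives
$$f_*f^*E\cong E\otimes\bigoplus_{k\ge 0}\mathrm{Sym}^k T_{\mathbb{P}^2}.$$
Since cohomology commutes with direct sums of quasi-coherent sheaves on a noetherian space,
$$H^i(T^*\mathbb{P}^2,f^*E)=\bigoplus_{k\ge0}H^i(\mathbb{P}^2,E\otimes \mathrm{Sym}^kT_{\mathbb{P}^2}).$$
The condition therefore becomes $H^{>0}(E\otimes\mathrm{Sym}^kT_{\mathbb{P}^2})=0$ for all $k\ge0$. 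As a sanity check, for $E=\mathcal{O}$ this should agree with rationality of $\mathcal{N}_{3,1}$, i.e.\ Lemma \ref{lem2.2}.

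\textbf{Step 2: resolve $\mathrm{Sym}^kT$ by line bundles.} Taking symmetric powers of the Euler sequence $0\to\mathcal{O}\to\mathcal{O}(1)^{3}\to T_{\mathbb{P}^2}\to0$ gives short exact sequences
$$0\to \mathrm{Sym}^{k-1}(\mathcal{O}(1)^3)\to \mathrm{Sym}^k(\mathcal{O}(1)^3)\to \mathrm{Sym}^kT_{\mathbb{P}^2}\to 0,$$
that is, $0\to\mathcal{O}(k-1)^{a_{k-1}}\to\mathcal{O}(k)^{a_k}\to\mathrm{Sym}^kT\to0$ with $a_k=\binom{k+2}{2}$. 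This holds because $\mathrm{Sym}^\bullet T$ is $\mathrm{Sym}^\bullet(V\otimes\mathcal{O}(1))$ modulo the regular element given by the Euler section. Tensoring with the locally free sheaf $E$ keeps the sequence exact. Since cohomology on $\mathbb{P}^2$ vanishes above degree $2$, the long exact sequence gives:
\begin{itemize}
\item[($\Leftarrow$)] If $H^{>0}(E(n))=0$ for all $n\ge0$, then $H^1$ and $H^2$ of $E\otimes\mathrm{Sym}^kT$ are squeezed between $H^{1},H^2$ of $E(k)^{a_k}$ and $H^2,H^3$ of $E(k-1)^{a_{k-1}}$. These vanish for $k\ge1$, and $k=0$ is just $E$.
\item[($\Rightarrow$)] Assume $H^{>0}(E\otimes\mathrm{Sym}^kT)=0$ for all $k$. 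Induct on $n$. The case $n=0$ is $k=0$. If $H^{>0}(E(n-1))=0$, the sequence for $k=n$ gives $H^{>0}(E(n))^{a_n}=0$.
\end{itemize}

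\textbf{Main obstacle.} The substantive point is the exactness of the symmetric-power Euler sequence, equivalently the identification $\mathrm{Sym}^\bullet T_{\mathbb{P}^2}\cong\mathrm{Sym}^\bullet(\mathcal{O}(1)^3)/(s)$ with $s$ a nonzerodivisor. I would check it locally, where $T$ is free and the Euler section is part of a basis of $\mathcal{O}(1)^3$, so the Koszul argument is immediate. A minor secondary check is commuting cohomology with the infinite direct sum in Step 1.
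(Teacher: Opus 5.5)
Your proposal is correct and follows essentially the paper's route. It reduces to global cohomology via affineness of $\mathcal{N}_{3,1}$, uses affineness of $f$ to get $f_*f^*E\cong\bigoplus_k E\otimes S^kT_{\mathbb{P}^2}$, and compares with $H^{>0}(E(n))$ through symmetric powers of the Euler sequence.

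The only difference is that you use the two-term sequence $0\to E(k-1)^{a_{k-1}}\to E(k)^{a_k}\to E\otimes S^kT_{\mathbb{P}^2}\to 0$ directly, which identifies the paper's $F^1$ explicitly. This makes your $(\Leftarrow)$ direction immediate, with the induction needed only for $(\Rightarrow)$. The paper instead uses the full filtration with graded pieces $E\otimes S^pT_{\mathbb{P}^2}$ and does its induction in the $(\Leftarrow)$ direction.
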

\begin{proof}
Since the target of $\pi$ is affine, $R^{>0}\pi_*f^*E=0$ is the same as  $H^{>0}(f^*E)=0$. Now since$f$ is affine, we have
$$H^{>0}(f^*E)=H^{>0}(f_*f^*E)=H^{>0}(S^\bullet (T_X)\otimes E)
$$
so the left hand side is equivalent to $H^{>0}(S^n(T_X)\otimes E)$ for $n\geq 0$. Now we prove this is equivalent to the right hand side.

Consider the Euler sequence
$$0\rightarrow\mathcal{O}_{\mathbb{P}^2}\rightarrow\mathcal{O}_{\mathbb{P}^2}(1)^{\oplus 2}\rightarrow T_{\mathbb{P}^2}\rightarrow 0
$$
Taking the symmetric product $S^n$ and tensoring $E$, one has the following filtration
$$E(n)^{\binom{n+2}{2}}=E\otimes S^n(\mathcal{O}_{\mathbb{P}^2}(1)^{\oplus 2})=F^0\supseteq F^1\supseteq...\supseteq F^n\supseteq F^{n+1}=0
$$
with quotients 
$$F^p/F^{p+1}\cong E\otimes S^p(T_{\mathbb{P}^2})\otimes S^{n-p}(\mathcal{O}_{\mathbb{P}^2})=E\otimes S^p(T_{\mathbb{P}^2})
$$

If $H^{>0}(S^n(T_X)\otimes E)=0$ for all $n\geq0$, then all quotients in this filtration have vanishing $H^{>0}$, so we also have $H^{>0}(E(n)^{\binom{n+2}{2}})=0$; hence $H^{>0}(E(n))=0$ for any $n\geq 0$.

Conversely, if $H^{>0}(E(n))=0$ for any $n\geq 0$, we prove by induction that $H^{>0}(S^n(T_X)\otimes E)=0$ for all $n\geq0$. For $n=0$, this is trivial. Now suppose $H^{>0}(S^k(T_X)\otimes E)=0$ for all $k\leq n-1$. For $k=n$, we see in the filtration above that there is a short exact sequence
$$0\rightarrow F_1\rightarrow E(n)^{\binom{n+2}{2}}\rightarrow E\otimes S^n(T_{\mathbb{P}^2})\rightarrow 0
$$
Now $H^{>0}(E(n)^{\binom{n+2}{2}})=0$. By induction and filtration of $F_1$, we have $H^{>0}(F_1)=0$ as well; hence $H^{>0}(E\otimes S^n(T_{\mathbb{P}^2}))=0$, completing the induction step.
\end{proof}
\begin{corollary}\label{coro4.5}
Suppose $E$ is a vector bundle on $\mathbb{P}^2$, then 
$$R^{>0}\pi_*f^*E=R^{>0}\pi_*f^*E^\vee=0 \Longleftrightarrow H^{>0}(E(n))=H^{<2}(E(-n-3))=0\ \,\text{for}\ n\geq 0
$$
and in this case $\pi_*f^*E$ is maximal Cohen-Macaulay.
\end{corollary}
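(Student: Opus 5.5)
Since $f^*(E^\vee)=(f^*E)^\vee$, Proposition \ref{prop4.4} applied to $E$ and to $E^\vee$ gives
$$R^{>0}\pi_*f^*E=0\iff H^{>0}(E(n))=0,\qquad R^{>0}\pi_*f^*E^\vee=0\iff H^{>0}(E^\vee(n))=0,$$
both for all $n\ge 0$. The first condition is already the one in the statement. So the plan is to translate the condition on $E^\vee$ into a condition on $E$ by Serre duality on $\mathbb{P}^2$, and then to invoke Proposition \ref{prop3.14} for the final claim.

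For the duality step we use $\omega_{\mathbb{P}^2}=\mathcal{O}_{\mathbb{P}^2}(-3)$. For a vector bundle $E$ and every $i$ this gives
$$H^i(E^\vee(n))\cong H^{2-i}\bigl(E\otimes\mathcal{O}(-n)\otimes\omega_{\mathbb{P}^2}\bigr)'=H^{2-i}(E(-n-3))'.$$
As $i$ runs over $\{1,2\}$, the index $2-i$ runs over $\{1,0\}$. Hence $H^{>0}(E^\vee(n))=0$ for all $n\ge0$ holds exactly when $H^{<2}(E(-n-3))=0$ for all $n\ge 0$. Combining this with the first condition gives the stated equivalence.

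For the last assertion, $f^*E$ is a vector bundle on $T^*\mathbb{P}^2$, and $\mathcal{N}_{3,1}$ is affine with symplectic resolution $\pi$. Once $R^{>0}\pi_*f^*E=R^{>0}\pi_*(f^*E)^\vee=0$, Proposition \ref{prop3.14} applies directly and shows that $\pi_*f^*E$ is maximal Cohen-Macaulay.

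The only point needing care is the identification $(f^*E)^\vee=f^*(E^\vee)$, which holds because $f^*$ commutes with duals of locally free sheaves. Apart from that, the argument is bookkeeping of indices in Serre duality; I expect no real obstacle.
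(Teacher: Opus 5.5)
Your proposal is correct and follows the paper's route exactly: the paper's proof just cites Proposition \ref{prop4.4}, Serre duality, and Proposition \ref{prop3.14}. You apply Proposition \ref{prop4.4} to $E$ and to $E^\vee$ (using $(f^*E)^\vee=f^*E^\vee$), convert the condition on $E^\vee$ via $H^i(E^\vee(n))\cong H^{2-i}(E(-n-3))'$, and finish with Proposition \ref{prop3.14}, which simply spells out that one-line argument.
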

\begin{proof}
This is immediately implied by Proposition \ref{prop4.4}, Proposition \ref{prop3.14}, and Serre duality.
\end{proof}
The following proposition tells us the relationship between the indecomposability of $E$ and the indecomposability of $\pi_*f^*E$.
\begin{proposition}\label{prop4.6}
Suppose $E$ is a vector bundle on $\mathbb{P}^2$. Then $E$ is indecomposable if and only if $\pi_*f^*E$ cannot be decomposed into the sum of two modules, both of rank $\geq 1$.
\end{proposition}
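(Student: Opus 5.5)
We prove the two implications separately, using that $\pi_*f^*E=H^0(T^*\mathbb{P}^2,f^*E)=\bigoplus_{n\ge 0}H^0(\mathbb{P}^2,S^n(T_{\mathbb{P}^2})\otimes E)$ is a graded module over the graded ring $\mathcal{O}=\bigoplus_n H^0(S^nT_{\mathbb{P}^2})$. The grading comes from the $\mathbb{C}^\times$-action scaling the cotangent fibers.

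\emph{Easy direction.} Suppose $\pi_*f^*E$ is indecomposable in the stated sense and $E=E_1\oplus E_2$ with both summands nonzero. Then $f^*E=f^*E_1\oplus f^*E_2$, so $\pi_*f^*E=\pi_*f^*E_1\oplus\pi_*f^*E_2$. Each summand has rank $\operatorname{rk}E_i\ge 1$, because $\pi$ is birational and $f^*E_i$ is a vector bundle of that rank. This is a contradiction.

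\emph{Hard direction.} Suppose $E$ is indecomposable and $M:=\pi_*f^*E=M_1\oplus M_2$ with $\operatorname{rk}M_i\ge 1$. The plan is to move the splitting back to $T^*\mathbb{P}^2$ and then to $\mathbb{P}^2$.
\begin{enumerate}
\item Since $f^*E$ is locally free, hence reflexive, Proposition \ref{prop4.2} shows $M$ is reflexive. Hence each $M_i$ is reflexive as a direct summand.
\item Apply Proposition \ref{prop3.13}(2). This is valid here: the exceptional locus is the zero section, of codimension $2$ in $T^*\mathbb{P}^2$, and the closed fibers have dimension $\le 2=n-2$. It gives $f^*E\cong\overline{M}=\overline{M_1}\oplus\overline{M_2}$, because the reflexive hull of a pullback commutes with finite direct sums.
\item We now have a nontrivial idempotent $e\in\operatorname{End}(f^*E)$ with $\operatorname{rk}\overline{M_i}=\operatorname{rk}M_i\ge1$. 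We need an idempotent of $\operatorname{End}(E)$.
\end{enumerate}

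Step 3 is the main obstacle. The endomorphism algebra decomposes as
\[
A=\operatorname{End}(f^*E)=\bigoplus_{n\ge0}H^0(\mathbb{P}^2,S^nT_{\mathbb{P}^2}\otimes\mathcal{E}nd\,E).
\]
It is a nonnegatively graded algebra with $A_0=\operatorname{End}(E)$, finite dimensional. I would first try to make the idempotent homogeneous. Let $A_+=\bigoplus_{n>0}A_n$; it is a two-sided ideal and $A/A_+\cong A_0$. The image $\bar e$ of $e$ in $A_0$ is an idempotent of $\operatorname{End}(E)$. Since $E$ is indecomposable and $A_0$ is finite dimensional, $A_0$ is local and $\bar e\in\{0,1\}$; say $\bar e=0$, replacing $e$ by $1-e$ if needed. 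Then $e\in A_+$, so $e=e^k\in A_+^k\subseteq\bigoplus_{n\ge k}A_n$ for every $k$, which forces $e=0$. This contradicts $\operatorname{rk}\overline{M_i}\ge1$.

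An alternative for the same step is to use the $\mathbb{C}^\times$-action: decompose $e$ into weight components and observe that the weight-zero part is an idempotent. The graded-ideal argument above already avoids any need for equivariance of the splitting, so I would use it. The points to check carefully are the identification $A_0=\operatorname{End}(E)$, which follows from $f_*\mathcal{O}=S^\bullet T_{\mathbb{P}^2}$, and the fact that $A_+$ is an ideal whose powers lie in increasing degrees.
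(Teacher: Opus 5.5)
Your proof is correct. Its first two steps coincide with the paper's: both reduce, via Proposition \ref{prop3.13}(2), to a decomposition $f^*E\cong\overline{M_1}\oplus\overline{M_2}$. The paper, like you, applies part (2) to a module that need not be maximal Cohen--Macaulay. This is harmless, since the proof of that part only uses reflexivity and the codimension-$2$ exceptional locus. Your step 1 (reflexivity of $M$ and of the $M_i$) is not actually needed.

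The routes differ in the final step. The paper restricts the decomposition to the zero section $\mathbb{P}^2\subset T^*\mathbb{P}^2$, getting $E=f^*E|_{\mathbb{P}^2}=\overline{M_1}|_{\mathbb{P}^2}\oplus\overline{M_2}|_{\mathbb{P}^2}$. It then argues that both summands are locally free of positive rank: they are summands of a vector bundle and nonzero at the generic point of the zero section, so Nakayama applies.

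Your quotient map $A\to A/A_+=A_0=\operatorname{End}(E)$ is exactly restriction of endomorphisms to the zero section, since $A_+$ is generated by the fiber coordinates. So your $\bar e$ is the paper's projection onto $\overline{M_1}|_{\mathbb{P}^2}$. Where the paper shows $\bar e\neq 0,1$ by counting ranks, you show it by the degree argument that an idempotent lying in $A_+$ must vanish. For that argument you correctly need $A=\bigoplus_n A_n$ as a direct sum rather than a product, which holds because cohomology on the noetherian space $\mathbb{P}^2$ commutes with direct sums.

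The paper's version is shorter and purely geometric. Yours uses only the nonnegative grading coming from the fiberwise $\mathbb{C}^\times$-action and a finite-dimensional degree-zero part, so it isolates the structural reason the splitting descends. It avoids any rank or Nakayama bookkeeping on the exceptional fiber.
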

\begin{proof}
The "if" part is obvious. Now suppose $\pi_*f^*E=M\oplus N$; then, by pulling back and taking the double dual, one has $\overline{\pi_*f^*E}=\overline{M}\oplus \overline{N}$. By Proposition \ref{prop3.13} (2), we see that $\overline{\pi_*f^*E}=f^*E$; so $f^*E=\overline{M}\oplus \overline{N}$. Now, since $\overline{M}$ and $\overline{N}$ are reflexive, they are torsion free. Suppose $\eta$ is the generic point of the exceptional fiber $\mathbb{P}^2$; then $\overline{M}_\eta\neq 0$, $\overline{N}_\eta\neq 0$. Therefore, by Nakayama's lemma, $\overline{M}|_{\mathbb{P}^2}$ and $\overline{N}|_{\mathbb{P}^2}$ still have positive ranks.

Now $f^*E|_{\mathbb{P}^2}=\overline{M}|_{\mathbb{P}^2}\oplus \overline{N}|_{\mathbb{P}^2}$. Regarding the exceptional fiber as the zero section of $\mathbb{P}^2$ in its cotangent bundle, we see that $f^*E|_{\mathbb{P}^2}=E$, so $E=\overline{M}|_{\mathbb{P}^2}\oplus \overline{N}|_{\mathbb{P}^2}$. Notice that the summand of a vector bundle is again a vector bundle because, over a local ring, the summand of a free module is again free. Since $\overline{M}|_{\mathbb{P}^2}$ and $\overline{N}|_{\mathbb{P}^2}$ are nontrivial, we see that $E$ is decomposable, completing the proof of another direction.
\end{proof}

This proposition allows us to construct indecomposable maximal Cohen-Macaulay modules using indecomposable bundles on $\mathbb{P}^2$. From now on, for any integer $r>0$, we are interested in the vector bundle $E$ on $\mathbb{P}^2$ satisfying the following conditions
\begin{equation}\tag{$\ast$}\label{star}
E \ \text{is indecomposable}, \  H^{>0}(E(n))=H^{<2}(E(-n-3))=0\ \text{for} \ n\geq 0
\end{equation}
As shown in Corollary \ref{coro4.5} and Proposition \ref{prop4.6}, those vector bundles can provide us with various examples of indecomposable maximal Cohen-Macaulay sheaves on $\mathcal{N}_{3,1}$, so we aim to construct such bundles.

We start from rank 1 bundles: for $E=\mathcal{O}_{\mathbb{P}^2}(a)$, we can easily see that $E$ satisfies $(\ast)$ if and only if $-2\leq a\leq 2$. This implies $E$ is a somewhat "balanced" bundle. The following lemma explains what it means for $E$ to be "balanced" when the rank is arbitrary.
\begin{lemma}
Suppose $E$ is a vector bundle on $\mathbb{P}^2$ satisfying $H^{>0}(E(n))=H^{<2}(E(-n-3))=0$ for $n\geq 0$. For any line $L\cong\mathbb{P}^1$ on $\mathbb{P}^2$, the restriction of $E$ to $L$ splits as
$$E=\bigoplus_{i=1}^{r}\mathcal{O}_L(e_i)
$$
and we have $-2\leq e_i\leq 2$.
\end{lemma}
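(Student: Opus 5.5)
Splitting on $L$ is Grothendieck's theorem, so the only content is the bound $-2\le e_i\le 2$. The plan is to restrict the vanishing hypotheses from $\mathbb{P}^2$ to $L$ using the sequence
$$0\rightarrow E(m-1)\rightarrow E(m)\rightarrow E|_L(m)\rightarrow 0,$$
obtained by tensoring $0\to\mathcal{O}_{\mathbb{P}^2}(-1)\to\mathcal{O}_{\mathbb{P}^2}\to\mathcal{O}_L\to0$ with $E(m)$. We then read off $e_i$ from the cohomology of $E|_L(m)=\bigoplus\mathcal{O}_L(e_i+m)$ for suitable twists $m$.

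\textbf{Upper bound.} Take $m=-1$. The long exact sequence gives
$$H^1(E(-1))\rightarrow H^1(E|_L(-1))\rightarrow H^2(E(-2)).$$
For the middle term to vanish we need both outer terms zero. The hypotheses as stated give $H^{>0}(E(n))=0$ only for $n\ge 0$ and $H^{<2}(E(-n-3))=0$ only for $n\ge 0$, so neither $H^1(E(-1))$ nor $H^2(E(-2))$ is controlled directly. Instead, take $m=0$:
$$H^1(E)\rightarrow H^1(E|_L)\rightarrow H^2(E(-1)).$$
Here $H^1(E)=0$, but $H^2(E(-1))$ is again not given.

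So I would switch to twists where everything is known. Since $H^2(E(n))=0$ for $n\ge0$, the sequence with $m=n+1$,
$$H^1(E(n+1))\rightarrow H^1(E|_L(n+1))\rightarrow H^2(E(n)),$$
gives $H^1(E|_L(m))=0$ for all $m\ge 1$. Taking $m=1$ yields $H^1(\mathcal{O}_L(e_i+1))=0$, i.e.\ $e_i+1\ge -1$, so $e_i\ge -2$. This is the lower bound, and it comes out of the $H^{>0}$ hypothesis.

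\textbf{Lower bound, dually.} From $H^0(E(-n-3))=H^1(E(-n-3))=0$ for $n\ge0$, the sequence with $m=-n-3$,
$$H^0(E(-n-4))\rightarrow H^0(E(-n-3))\rightarrow H^0(E|_L(-n-3))\rightarrow H^1(E(-n-4)),$$
has outer terms zero, because $-n-4=-(n+1)-3$. Hence $H^0(E|_L(-3))=0$, so $e_i-3<0$, i.e.\ $e_i\le 2$. Equivalently, one can apply the first argument to $E^\vee$ via Serre duality.

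The only delicate point is choosing twists where both flanking terms of the long exact sequence are covered by the hypotheses. The shifts $m=1$ and $m=-3$ work exactly, so I expect no real obstacle beyond this bookkeeping.
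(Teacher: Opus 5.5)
Your proof is correct and is essentially the paper's argument: restrict via $0\to E(m-1)\to E(m)\to E|_L(m)\to 0$ and use the twists $m=1$ (giving $H^1(\mathcal{O}_L(e_i+1))=0$, so $e_i\ge -2$) and $m=-3$ (giving $H^0(\mathcal{O}_L(e_i-3))=0$, so $e_i\le 2$). Your range $m\le -3$ for the $H^0$-vanishing is actually more accurate than the paper's stated $n\le -2$, which fails for $E=\mathcal{O}_{\mathbb{P}^2}(2)$. Only your headings are swapped: the part labelled ``Upper bound'' proves $e_i\ge -2$ and the part labelled ``Lower bound'' proves $e_i\le 2$. The abandoned attempts with $m=-1,0$ can simply be deleted.
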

\begin{proof}
The splitting statement is obvious since all vector bundles on $\mathbb{P}^1$ split into the sum of line bundles. It suffices for us to prove $-2\leq e_i\leq 2$. Consider the exact sequence
$$0\rightarrow\mathcal{O}_{\mathbb{P}^2}(-1)\rightarrow\mathcal{O}_{\mathbb{P}^2}\rightarrow\mathcal{O}_L\rightarrow0
$$
Tensoring with $E(n)$ gives you $0\rightarrow E(n-1)\rightarrow E(n)\rightarrow E(n)|_L\rightarrow 0$. Now, take the long exact sequence of cohomology. By cohomology vanishing under the assumption, one sees that $H^1(E(n)|_L)=0$ for $n\geq 1$ and $H^0(E(n)|_L)=0$ for $n\leq -2$. If we suppose $E$ splits as $\bigoplus_{i=1}^{r}\mathcal{O}_L(e_i)$, then this implies $H^1(\mathcal{O}_L(e_i+1))=0$ and $H^0(\mathcal{O}_L(e_i-3))=0$. Then one sees easily that we must have $-2\leq e_i\leq 2$.
\end{proof}

According to this lemma, any vector bundle with $H^{>0}(E(n))=H^{<2}(E(-n-3))=0$ for $n\geq 0$ splits as $\bigoplus_{i=-2}^{2}\mathcal{O}_L(i)^{\oplus a_i}$ when restricted to any line $L\cong\mathbb{P}^1$. One can easily show the following relations:
\begin{enumerate}
\item $a_2+a_1+a_0+a_{-1}+a_{-2}= r$, where $r$ is the rank of $E$.
\item $2a_2+a_1-a_{-1}-2a_{-2}= c_1(E)$, the first Chern class of $E$.
\item $a_2=H^0(E(-2)), \ a_{-2}=H^2(E(-1))$.
\end{enumerate}

In the discussion above, we only care about the cohomology vanishing conditions. The following lemma tells us the restriction that indecomposability gives us.
\begin{lemma}\label{lem4.8}
Let $E$ and $F$ be vector bundles on $\mathbb{P}^2$ and let $L \subset \mathbb{P}^2$ be a line. Suppose
$$
E|_L \cong F|_L
\quad \text{and} \quad
\mathrm{Ext}^1_{\mathbb{P}^2}(F,E(-1))=0.
$$
Then $E \cong F$.
\end{lemma}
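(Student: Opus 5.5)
The plan is to lift an isomorphism on $L$ to an isomorphism on $\mathbb{P}^2$, using the restriction sequence for $\mathcal{H}om(F,E)=F^\vee\otimes E$. Tensor the exact sequence
$$0\rightarrow\mathcal{O}_{\mathbb{P}^2}(-1)\rightarrow\mathcal{O}_{\mathbb{P}^2}\rightarrow\mathcal{O}_L\rightarrow0$$
with the locally free sheaf $F^\vee\otimes E$. This gives
$$0\rightarrow F^\vee\otimes E(-1)\rightarrow F^\vee\otimes E\rightarrow (F^\vee\otimes E)|_L\rightarrow 0,$$
and taking global sections yields
$$\mathrm{Hom}(F,E)\rightarrow \mathrm{Hom}_L(F|_L,E|_L)\rightarrow H^1(F^\vee\otimes E(-1))=\mathrm{Ext}^1_{\mathbb{P}^2}(F,E(-1)).$$
Since $F$ is locally free, $\mathrm{Ext}^1(F,E(-1))=H^1(F^\vee\otimes E(-1))$, which vanishes by hypothesis. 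So the restriction map $\mathrm{Hom}(F,E)\rightarrow\mathrm{Hom}_L(F|_L,E|_L)$ is surjective. Choose an isomorphism $\psi:F|_L\xrightarrow{\sim}E|_L$ and lift it to a global morphism $\phi:F\rightarrow E$ with $\phi|_L=\psi$.

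Next I show $\phi$ is an isomorphism. Both bundles have the same rank $r$, since their restrictions to $L$ agree. Consider $\det\phi\in H^0(\det F^\vee\otimes\det E)=H^0(\mathcal{O}_{\mathbb{P}^2}(c_1(E)-c_1(F)))$. Degrees are detected on $L$, and $E|_L\cong F|_L$, so $c_1(E)=c_1(F)$ and $\det\phi$ is a section of $\mathcal{O}_{\mathbb{P}^2}$, that is, a constant. Its restriction to $L$ is $\det\psi$, which is nowhere zero. Hence $\det\phi$ is a nonzero constant, so $\phi$ is an isomorphism everywhere and $E\cong F$.

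No step is a real obstacle. The only points to check are the identification $\mathrm{Ext}^1(F,E(-1))\cong H^1(F^\vee\otimes E(-1))$, which uses only that $F$ is locally free, and the observation that a determinant with constant value cannot vanish anywhere. An alternative to the determinant argument is to note that the degeneracy locus of $\phi$ is either empty or a divisor. A divisor in $\mathbb{P}^2$ always meets $L$, where $\phi$ is an isomorphism, so the locus is empty.
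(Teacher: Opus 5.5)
Your proposal is correct and follows essentially the same route as the paper. You lift an isomorphism $F|_L\to E|_L$ to a global morphism using the restriction sequence and $\mathrm{Ext}^1_{\mathbb{P}^2}(F,E(-1))=0$, then conclude that $\det$ of the lift is a nonzero constant because it does not vanish on $L$.
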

\begin{proof}
We want to extend the isomorphism $\varphi: F|_L \to E|_L$ to one over $\mathbb{P}^2$. This is possible since there is an exact sequence
$$
0 \to \text{Hom}_{\mathbb{P}^2}(F,E(-1))
\to \text{Hom}_{\mathbb{P}^2}(F,E)
\to \text{Hom}_L(F|_L,E|_L)
\to \text{Ext}^1_{\mathbb{P}^2}(F,E(-1))=0.
$$
Thus every morphism $F|_L \to E|_L$ extends to a morphism $f:F\to E$ satisfying $f|_L=\varphi$.

Consider the induced morphism on determinants $\det f : \det F \to \det E$. Since $E|_L \cong F|_L$, the bundles $E$ and $F$ have the same rank and the same first Chern class. Hence $\det f$ is multiplication by a constant in $\mathbb{C}$. This constant is nonzero because $\det f$ does not vanish on $L$. It follows that $f$ is an isomorphism everywhere on $\mathbb{P}^2$. Hence $E \cong F$.
\end{proof}
\begin{corollary}\label{coro4.9}
Suppose $E$ is a non-splitting vector bundle on $\mathbb{P}^2$ with $H^{>0}(E(n))=H^{<2}(E(-n-3))=0$ for $n\geq 0$. Suppose its restriction to a line $L$ splits as $\bigoplus_{i=-2}^{2}\mathcal{O}_L(i)^{\oplus a_i}$. Then we have
$$a_{-1}+a_0>0,\ a_0+a_1>0
$$
\end{corollary}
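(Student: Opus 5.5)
The idea is to argue by contradiction using Lemma \ref{lem4.8}. We compare $E$ with the split bundle
$$F=\bigoplus_{i=-2}^{2}\mathcal{O}_{\mathbb{P}^2}(i)^{\oplus a_i},$$
which by construction satisfies $F|_L\cong E|_L$. If the relevant $\mathrm{Ext}^1$ vanishes, Lemma \ref{lem4.8} gives $E\cong F$. This contradicts the assumption that $E$ does not split. So it suffices to show that each of the two equalities $a_{-1}+a_0=0$ and $a_0+a_1=0$ forces the needed $\mathrm{Ext}^1$ to vanish. Throughout, the only inputs are $H^1(E(n))=0$ for $n\geq 0$ and $H^1(E(-n-3))=0$ for $n\geq 0$.

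\emph{Case $a_0+a_1=0$.} Only the summands $\mathcal{O}(2),\mathcal{O}(-1),\mathcal{O}(-2)$ occur in $F$. We apply Lemma \ref{lem4.8} exactly as stated, with the pair $(E,F)$, so we need
$$\mathrm{Ext}^1(F,E(-1))=\bigoplus_{i} H^1(E(-1-i))^{\oplus a_i}.$$
For $i=2,-1,-2$ the twists are $E(-3)$, $E(0)$ and $E(1)$. Each of these has vanishing $H^1$ by the hypotheses, taking $n=0$ in the second condition, and $n=0,1$ in the first.

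\emph{Case $a_{-1}+a_0=0$.} Only $\mathcal{O}(2),\mathcal{O}(1),\mathcal{O}(-2)$ occur. The direct computation would need $H^1(E(-2))=0$, which is not among the hypotheses. So here I apply Lemma \ref{lem4.8} with the roles of $E$ and $F$ exchanged, which is legitimate since the lemma is symmetric in its hypotheses apart from the $\mathrm{Ext}$ condition. It then suffices that
$$\mathrm{Ext}^1(E,F(-1))=\bigoplus_i H^1(E^\vee(i-1))^{\oplus a_i}$$
vanishes. By Serre duality on $\mathbb{P}^2$, $H^1(E^\vee(m))\cong H^1(E(-m-3))^\vee$, so the summands are dual to $H^1(E(-i-2))$. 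For $i=2,1,-2$ these are $H^1(E(-4))$, $H^1(E(-3))$ and $H^1(E(0))$, all of which vanish by hypothesis.

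In both cases Lemma \ref{lem4.8} yields $E\cong F$, contradicting that $E$ is non-split, which proves both inequalities. The only delicate point is the asymmetry between the two cases. One must pick the direction of the $\mathrm{Ext}$ in Lemma \ref{lem4.8} so that the needed twists land in the range covered by the hypotheses. In particular, the second case requires dualizing rather than using the lemma as stated.
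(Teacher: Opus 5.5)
Your proof is correct and is essentially the paper's argument. The paper also compares $E$ with the split bundle $\bigoplus_i\mathcal{O}_{\mathbb{P}^2}(i)^{\oplus a_i}$ via Lemma \ref{lem4.8}: it uses $\mathrm{Ext}^1$ from the split bundle into $E(-1)$ (twists $E(-3)$, $E$, $E(1)$) to get $a_0+a_1>0$, and the reversed direction with Serre duality (twists $E(-4)$, $E(-3)$, $E$) to get $a_{-1}+a_0>0$, exactly as you do.
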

\begin{proof}
Since $E$ is non-splitting, $E\neq \bigoplus_{i=-2}^{2}\mathcal{O}_{\mathbb{P}^2}(i)^{\oplus a_i}$; thus, by Lemma \ref{lem4.8}, we should have
$$\mathrm{Ext}^1_{\mathbb{P}^2}(\bigoplus_{i=-2}^{2}\mathcal{O}_{\mathbb{P}^2}(i)^{\oplus a_i},E(-1))\neq0,\ \ \mathrm{Ext}^1_{\mathbb{P}^2}(E,\bigoplus_{i=-2}^{2}\mathcal{O}_{\mathbb{P}^2}(i-1)^{\oplus a_i})\neq0
$$
Now $\mathrm{Ext}^1_{\mathbb{P}^2}(\bigoplus_{i=-2}^{2}\mathcal{O}_{\mathbb{P}^2}(i)^{\oplus a_i},E(-1))=\bigoplus_{i=-2}^{2}H^1(E(-1-i))^{\oplus a_i}$ is nonvanishing. By the cohomology vanishing assumption, $H^1(E(-1-i))=0$ if $i=2,-1,-2$, hence $a_1$ and $a_0$ cannot be all zero. This proves $a_0+a_1>0$. Similarly, $\mathrm{Ext}^1_{\mathbb{P}^2}(E,\bigoplus_{i=-2}^{2}\mathcal{O}_{\mathbb{P}^2}(i-1)^{\oplus a_i})=\bigoplus_{i=-2}^{2}H^1(E(-i-2))^{\oplus a_i}$ by Serre duality, and this is nonvanishing. Now $H^1(E(-2-i))=0$ if $i=-2,1,2$, so we should have $a_{-1}+a_0>0$, completing the proof.
\end{proof}

\subsection{Classification of 2-bundles with cohomology vanishing}
Using Corollary \ref{coro4.9}, we can try to classify all rank 2 indecomposable vector bundles on $\mathbb{P}^2$ satisfying our cohomology vanishing condition.

Now we suppose $E$ is a rank 2 indecomposable bundle with $H^{>0}(E(n))=H^{<2}(E(-n-3))=0$ for $n\geq 0$. By Corollary \ref{coro4.9},the splitting type of $E|_L$ for any line $L$ can only be one of the following: $(a,b)=(0,2), (0,1), (0,0), (0,-1), (0,-2), (1,-1)$ (here splitting type (a,b) means $E|_L=\mathcal{O}_L(a)\oplus\mathcal{O}_L(b)$). Hence, $c_1(E)=a+b\in\{-2,-1,0,1,2\}$. Now we classify those bundles by their Chern classes.

First, if $c_1(E)\neq 0$, we see that the restriction of $E$ to any line is $\mathcal{O}_L\oplus\mathcal{O}_L(c_1(E))$. Thus, $E$ is a \emph{uniform bundle}, i.e., its restriction to any projective line has the same splitting type. By a theorem of Van de Ven (cf. \cite{OSS1980} Theorem 2.2.2),  a uniform 2-bundle on $\mathbb{P}^2$ that does not split has the form $T_{\mathbb{P}^2}(m)$. Since $c_1(T_{\mathbb{P}^2}(m))=3-2m$, we see that $m=-1$ or $m=-2$ are the only possibilities. So $E=T_{\mathbb{P}^2}(-1)$ or $T_{\mathbb{P}^2}(-2)$. Checking the cohomology table of $T_{\mathbb{P}^2}$,
$$
\begin{array}{c|cccccccccccc}
k & -8 & -7 & -6 & -5 & -4 & -3 & -2 & -1 & 0 & 1 & 2 & 3\\ \hline
h^2(T(k)) & 24 & 15 & 8 & 3 & 0 & 0 & 0 & 0 & 0 & 0 & 0 & 0\\
h^1(T(k)) & 0  & 0  & 0 & 0 & 0 & 1 & 0 & 0 & 0 & 0 & 0 & 0\\
h^0(T(k)) & 0  & 0  & 0 & 0 & 0 & 0 & 0 & 3 & 8 & 15 & 24 & 35
\end{array}
$$
we see that both $T_{\mathbb{P}^2}(-1)$ and $T_{\mathbb{P}^2}(-2)$ satisfy our cohomology vanishing condition. So we have proved the following.
\begin{proposition}
Suppose $E$ is a non-splitting 2-bundle on $\mathbb{P}^2$ with $H^{>0}(E(n))=H^{<2}(E(-n-3))=0$ for $n\geq 0$. If $c_1(E)\neq0$, then $E=T_{\mathbb{P}^2}(-1)$ or $T_{\mathbb{P}^2}(-2)$.
\end{proposition}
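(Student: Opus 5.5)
The argument is essentially the one sketched just before the statement; I would organize it in three steps.

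\emph{Step 1: the possible splitting types.} Let $E$ be a non-splitting rank $2$ bundle satisfying the vanishing hypotheses. By the lemma on splitting types, for every line $L$ we have $E|_L\cong\mathcal{O}_L(a)\oplus\mathcal{O}_L(b)$ with $-2\le b\le a\le 2$. Corollary \ref{coro4.9} forces $a_{-1}+a_0>0$ and $a_0+a_1>0$. With only two summands, this leaves exactly the types $(0,b)$ with $b\in\{-2,\dots,2\}$, together with $(1,-1)$.

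\emph{Step 2: uniformity.} Since $c_1(E)=a+b$ does not depend on $L$, the assumption $c_1(E)\neq 0$ excludes both $(0,0)$ and $(1,-1)$. The type is then forced to be $(0,c_1(E))$ for every line $L$. Hence $E$ is a uniform bundle of rank $2$.

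\emph{Step 3: applying Van de Ven.} By Van de Ven's theorem (\cite{OSS1980}, Theorem 2.2.2), a non-splitting uniform $2$-bundle on $\mathbb{P}^2$ is isomorphic to $T_{\mathbb{P}^2}(m)$ for some $m$. Its restriction to a line is $\mathcal{O}_L(2+m)\oplus\mathcal{O}_L(1+m)$. Matching this with a type $(0,c)$ where $c\in\{\pm1,\pm2\}$ leaves only:
\begin{enumerate}
\item $m=-1$, giving type $(1,0)$;
\item $m=-2$, giving type $(0,-1)$.
\end{enumerate}
So $E$ is $T_{\mathbb{P}^2}(-1)$ or $T_{\mathbb{P}^2}(-2)$.

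Equivalently, $c_1=3+2m\in\{1,-1\}$. The paper's formula $3-2m$ has a sign slip, but it leads to the same two values of $m$.

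\emph{Completeness check.} The statement itself only requires the "only if" direction. To confirm that neither candidate is vacuous, I would read off the cohomology table of $T_{\mathbb{P}^2}$ from the Euler sequence and Bott's formula. For $T(-1)$ the relevant twists are $k=n-1\ge -1$ for $H^{>0}$ and $k=-n-4\le -4$ for $H^{<2}$. For $T(-2)$ they are $k=n-2\ge -2$ and $k=-n-5\le -5$. The table shows the required groups vanish in all these ranges; the only nonzero $H^1$ occurs at $k=-3$, which lies outside both.

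The main obstacle is Step 3, specifically invoking the classification of uniform bundles. A self-contained proof would need the standard argument: look at the family of lines through a point and the induced morphism to a Grassmannian or $\mathbb{P}^1$, then show that $E$ is an extension that must be a twist of $T$ or $\Omega$. Rather than reprove this, I would cite it.
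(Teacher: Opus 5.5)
Your proposal is correct and follows the paper's argument: Corollary \ref{coro4.9} limits the splitting types, $c_1(E)\neq 0$ then forces type $(0,c_1(E))$ on every line so $E$ is uniform, and Van de Ven's classification gives $E\cong T_{\mathbb{P}^2}(m)$ with $m\in\{-1,-2\}$. You are right that $c_1(T_{\mathbb{P}^2}(m))=3+2m$, but your side remark is off: the misprinted $3-2m$ would give $m=1,2$, so only the corrected formula (or your matching of splitting types) produces the paper's stated values $m=-1,-2$.
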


Now it suffices to classify the $c_1(E)=0$ case. By Riemann-Roch for surfaces, we have the following computation (cf. \cite{EisenbudHarris2016}, Theorem 14.2):
\begin{align}\notag
\chi(E(n))&=\frac{c_1(E(n))^2-2c_2(E(n))+c_1(E(n))c_1(T_{\mathbb{P}^2})}{2}+\text{rank} (E(n))\frac{c_1(T_{\mathbb{P}^2})^2+c_2(T_{\mathbb{P}^2})}{12}\\\notag
&=\frac{r}{2}n^2+(\frac{3}{2}r+c_1(E))n+\frac{1}{2}c_1(E)^2-c_2+\frac{3}{2}c_1(E)+r\\\notag
&=n^2+3n+2-c_2
\end{align}
Under our cohomology vanishing condition, $\chi(E)=H^0(E)>0$, so $2-c_2\geq 0$, i.e., $c_2\leq 2$. 

On the other hand, the splitting type of $E$ restricted to any line $L$ is $\mathcal{O}_L\oplus\mathcal{O}_L$ or $\mathcal{O}_L(1)\oplus\mathcal{O}_L(-1)$. Using \cite{OSS1980} Theorem 2.25 or Lemma 3.2.2, we see that the lines on $\mathbb{P}^2$ on which $E$ splits as $\mathcal{O}_L(1)\oplus\mathcal{O}_L(-1)$ form a curve in $(\mathbb{P}^2)^*$, which is called the curve of \emph{jumping lines}. By \cite{EisenbudHarris2016} Proposition 14.8, the degree of this curve is $c_2(E)-\frac{1}{2}c_1(E)^2=c_2$; hence $c_2\geq 0$. Therefore $c_2\in\{0,1,2\}$.

If $c_2=0$, we see that there are no jumping lines, so $E$ is a uniform bundle. Since all non-splitting uniform bundles are of the form $T_{\mathbb{P}^2}(m)$, and we see that none of them has $c_1=0$, so $E$ must split.

If $c_2=1$, we have $H^0(E)=\chi(E)=1$. We can suppose $s$ is a nonzero global section that generates $H^0(E)$. Now let's prove that $s$ vanishes in codimension 2.

For any line $L$ on $\mathbb{P}^2$, since $E|_L=\mathcal{O}_L\oplus\mathcal{O}_L\ \text{or}\ \mathcal{O}_L(1)\oplus\mathcal{O}_L(-1)$, we see that  $E(1)|_L$ is generated by global sections. Now applying the long exact sequence of cohomology to $0\rightarrow E\rightarrow E(1)\rightarrow E(1)|_L\rightarrow 0$,
$$\cdots\rightarrow H^0(E(1))\rightarrow H^0(E(1)|_L)\rightarrow H^1(E)=0\rightarrow\cdots
$$
We see that $H^0(E(1))\rightarrow H^0(E(1)|_L)$ is surjective, hence $E(1)$ is generated by global sections. By Lemma 5.2 in \cite{EisenbudHarris2016}, we can choose two general sections $u,v$ of $E(1)$ such that they both vanish in codimension 2 and the degeneracy locus of $u\wedge v$ is a generically reduced curve $C$. Indeed, by Bertini's Theorem, we can choose $C$ to be nonsingular, hence a smooth quadratic curve since $c_1(E(1))=2$. Then $u,v$ induces a short exact sequence
$$0 \rightarrow\mathcal{O}_{\mathbb{P}^2}^{\oplus 2}\xrightarrow{u,v} E(1)\rightarrow Q\rightarrow 0
$$
Now take the long exact sequence of local cohomology at any closed point $m$,
$$
\cdots\rightarrow H^0_m(E(1))=0\rightarrow H^{0}_m(Q)\rightarrow H^1_m(\mathcal{O}_{\mathbb{P}^2})^{\oplus 2}=0
$$
We see that none of the sections of $Q$ are supported on closed points. Then $Q|_C$ is torsion free, hence locally free on curve $C$. Now $u,v$ vanishing in codimension 2 tells us that the rank of $Q|_C$ is 1. Since smooth quadratic curves on $\mathbb{P}^2$ are all rational, we see that  $Q=\mathcal{O}_C(d)$ for some d (here $\mathcal{O}_{\mathbb{P}^2}(1)|_L=\mathcal{O}_C(2)$). Computing the dimension of global sections, we see that $h^0(\mathcal{O}_C(d))=h^0(Q)=h^0(E(1))-h^0(\mathcal{O}_{\mathbb{P}^2}^{\oplus 2})=1^2+3+1-2=3$, so $d=2$. Twisting by $\mathcal{O}_{\mathbb{P}^2}(-1)$ gives us a short exact sequence
$$0\rightarrow \mathcal{O}_{\mathbb{P}^2}(-1)^{\oplus 2}\rightarrow E\rightarrow \mathcal{O}_C\rightarrow 0
$$
We see that the global section $s$ is mapping to the constant $\lambda\neq0$ in $\mathcal{O}_C$. If $s$ degenerates in codimension 1, then the degeneracy locus must intersect $C$, a contradiction. Hence $s$ vanishes only at closed points. 

Now $s$ induces an exact sequence
$$0\rightarrow\mathcal{O}_{\mathbb{P}^2}\xrightarrow{s} E\rightarrow G\rightarrow 0
$$
Again by taking the long exact sequence of local cohomology, we see that $G$ has no section supports at closed points. For codimension 1 points, we see that $s$ is nonvanishing, so $\mathcal{O}_{\mathbb{P}^2}$ splits into $E$ at codimension 1 points. Then $G$ is locally free in codimension 1; hence, it has no sections supported at codimension 1 points. We see $G$ is torsion free since it has no sections supported on proper subvarieties. The injection $G\hookrightarrow G^{\vee\vee}=\mathcal{O}_{\mathbb{P}^2}(c_1(G))=\mathcal{O}_{\mathbb{P}^2}$ implies $G$ is an ideal sheaf. By computing Chern classes
$$c(G)=\frac{c(E)}{c(\mathcal{O}_{\mathbb{P}^2})}=c(E),\quad\text{so}\ c_2(G)=1
$$
we see that $G=I_x$ is the ideal sheaf of one point, so E is the extension of a one point ideal sheaf $I_x$ by $\mathcal{O}_{\mathbb{P}^2}$.

Now we check all such extensions. Since you have a short exact sequence
$$0\rightarrow I_x\rightarrow\mathcal{O}_{\mathbb{P}^2}\rightarrow \mathcal{O}_x\rightarrow 0
$$
Applying $\text{Hom}_{\mathbb{P}^2}(-,\mathcal{O}_{\mathbb{P}^2})$, we see that 
$$\text{Ext}^1_{\mathbb{P}^2}(I_x,\mathcal{O}_{\mathbb{P}^2})=\text{Ext}^2_{\mathbb{P}^2}(\mathcal{O}_x,\mathcal{O}_{\mathbb{P}^2})\cong H^0(\mathcal{O}_x)\cong \mathbb{C}
$$
Then, for any $x\in\mathbb{P}^2$, there is a one dimensional family of extensions, hence a unique isomorphism class of extension sheaf $E_x$ of $I_x$ by $\mathcal{O}_{\mathbb{P}^2}$.  Let's check if $E_x$ is a vector bundle and compute its cohomology table.

To check whether $E_x$ is locally free, it suffices to check whether $\mathcal{E}xt^1(E_x,\mathcal{O})=\mathcal{E}xt^2(E_x,\mathcal{O})=0$. Using the exact sequence defining $E_x$ and the exact sequence above, we see that
\begin{align}\notag
\mathcal{E}xt^1_{\mathbb{P}^2}(E_x,\mathcal{O}_{\mathbb{P}^2})&=\text{coker}\ (\mathcal{H}om_{\mathbb{P}^2}(\mathcal{O}_{\mathbb{P}^2},\mathcal{O}_{\mathbb{P}^2})\rightarrow\mathcal{E}xt^1(I_x,\mathcal{O}_{\mathbb{P}^2}))=0\\\notag
\mathcal{E}xt^2_{\mathbb{P}^2}(E_x,\mathcal{O}_{\mathbb{P}^2})&=\mathcal{E}xt^2_{\mathbb{P}^2}(I_x,\mathcal{O}_{\mathbb{P}^2})=0
\end{align}
The first equals zero because the extension class is nontrivial, so $\mathcal{H}om_{\mathbb{P}^2}(\mathcal{O}_{\mathbb{P}^2},\mathcal{O}_{\mathbb{P}^2})=\mathcal{O}_{\mathbb{P}^2}$ is mapping to the generator of $\mathcal{E}xt^1(I_x,\mathcal{O}_{\mathbb{P}^2})=\mathcal{E}xt^2(\mathcal{O}_x,\mathcal{O}_{\mathbb{P}^2})=\mathcal{O}_x$. Hence, $E_x$ is a vector bundle. 

Finally, let's check the cohomology vanishing condition using the exact sequence 
$$0\rightarrow\mathcal{O}_{\mathbb{P}^2}(n)\rightarrow E_x(n)\rightarrow I_x(n)\rightarrow 0
$$
Let's check $H^{>0}(E_x(n))=0$ for $n\geq 0$ first. For $n\geq 0$, by applying the long exact sequence of cohomology to the exact sequence above, we have
\begin{align}\notag
H^1(E_x(n))&=H^1(I_x(n))=\text{coker}\,H^0(\mathcal{O}_{\mathbb{P}^2}(n))\rightarrow H^0(\mathcal{O}_x(n))=0\\\notag
H^2(E_x(n))&=H^2(I_x(n))=H^1(\mathcal{O}_x(n))=0
\end{align}
so $H^{>0}(E_x(n))=0$ for $n\geq 0$. Now $H^{i}(E_x(-n-3))=H^{2-i}(E_x^\vee(n))$. Since $c_1(E_x)=0$, we find that $E_x^\vee\cong E_x\otimes (\det E_x)^{-1}\cong E_x$, so $H^{<0}(E_x(-n-3))=0 \Longleftrightarrow H^{>0}(E_x(n))=0$. Then we see $H^{<0}(E_x(-n-3))=0$ for $n\geq 0$ as well. In summary, we have proved
\begin{proposition}\label{prop4.11}
Suppose $E$ is a non-splitting 2-bundle on $\mathbb{P}^2$ with $H^{>0}(E(n))=H^{<2}(E(-n-3))=0$ for $n\geq 0$. If $c_1(E)=0,\ c_2(E)=1$, then $E=E_x$, where $E_x$ is determined by the unique non-splitting bundle defined by 
$$0\rightarrow\mathcal{O}_{\mathbb{P}^2}\rightarrow E_x\rightarrow I_x\rightarrow 0
$$
Hence, there is a family of such bundles parametrized by points on $\mathbb{P}^2$.
\end{proposition}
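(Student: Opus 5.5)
The proof has two parts. First we show that any such $E$ is an extension of $I_x$ by $\mathcal{O}_{\mathbb{P}^2}$ for some point $x$. Then we show that for each $x$ there is exactly one non-split extension, and that it is a vector bundle with the required vanishing. This is the order of the preceding discussion, which we now organize as a proof.

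\textbf{Step 1: a section of $E$.} Riemann--Roch with $c_1=0$, $c_2=1$, $r=2$ gives $\chi(E(n))=n^2+3n+1$. The vanishing hypothesis then gives $h^0(E)=\chi(E)=1$. Let $s$ span $H^0(E)$.

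\textbf{Step 2: $s$ vanishes only in codimension $2$ (the main obstacle).} By Corollary \ref{coro4.9}, every line $L$ has splitting type $(0,0)$ or $(1,-1)$, so $E(1)|_L$ is globally generated. Since $H^1(E)=0$, the restriction $H^0(E(1))\to H^0(E(1)|_L)$ is surjective, and therefore $E(1)$ is globally generated.

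Choose two general sections $u,v$ of $E(1)$. Their degeneracy locus is a smooth conic $C$, because $c_1(E(1))=2$ and Bertini applies. The cokernel of $\mathcal{O}^{\oplus 2}\xrightarrow{(u,v)}E(1)$ has depth $\ge 1$ at closed points, so it is a line bundle $\mathcal{O}_C(d)$ on $C\cong\mathbb{P}^1$. Counting sections, $h^0(E(1))-2=3$, which forces $d=2$. Twisting by $\mathcal{O}(-1)$ gives
$$0\to\mathcal{O}(-1)^{\oplus 2}\to E\to\mathcal{O}_C\to 0.$$
Since $H^0(\mathcal{O}(-1))=0$, the section $s$ maps to a nonzero constant in $H^0(\mathcal{O}_C)$. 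So $s$ vanishes nowhere on $C$. Any curve in $\mathbb{P}^2$ meets $C$, so the zero locus of $s$ contains no curve; it is a finite set.

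\textbf{Step 3: the quotient is $I_x$.} Let $G=\operatorname{coker}(s)$. A local cohomology argument shows $G$ has no sections supported at closed points. Because $s$ is nonvanishing at codimension-one points, $G$ is locally free there. Hence $G$ is torsion-free of rank $1$ with $c_1(G)=0$, so it embeds in $G^{\vee\vee}=\mathcal{O}$ and is an ideal sheaf of a zero-dimensional subscheme. From $c(G)=c(E)$ we get $c_2(G)=1$, so that subscheme has length $1$, i.e.\ $G=I_x$ for a single reduced point $x$.

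\textbf{Step 4: uniqueness and existence of $E_x$.} From $0\to I_x\to\mathcal{O}\to\mathcal{O}_x\to0$ and $H^1(\mathcal{O})=H^2(\mathcal{O})=0$ we get
$$\operatorname{Ext}^1(I_x,\mathcal{O})\cong\operatorname{Ext}^2(\mathcal{O}_x,\mathcal{O})\cong\mathbb{C}.$$
All nonzero classes differ by a scalar and give isomorphic middle terms, so there is a unique non-split extension $E_x$. Since $E$ is non-split, $E\cong E_x$.

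To see $E_x$ is locally free, check $\mathcal{E}xt^1(E_x,\mathcal{O})=\mathcal{E}xt^2(E_x,\mathcal{O})=0$. The first vanishes because a non-split class makes the connecting map $\mathcal{O}\to\mathcal{E}xt^1(I_x,\mathcal{O})\cong\mathcal{O}_x$ surjective. The second vanishes because $\mathcal{E}xt^2(I_x,\mathcal{O})\cong\mathcal{E}xt^3(\mathcal{O}_x,\mathcal{O})=0$ on a surface.

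For the cohomology conditions with $n\ge0$: $H^1(E_x(n))=H^1(I_x(n))=0$, since $\mathcal{O}(n)$ separates a single point. Also $H^2(E_x(n))=0$. Finally $E_x^\vee\cong E_x$ because $\det E_x$ is trivial, so Serre duality gives $H^{<2}(E_x(-n-3))=0$.

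This produces exactly one bundle for each $x\in\mathbb{P}^2$, i.e.\ a family parametrized by $\mathbb{P}^2$.
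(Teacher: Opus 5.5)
Your proposal is correct and follows the paper's own proof essentially step by step. The steps are: Riemann--Roch to get $h^0(E)=1$; global generation of $E(1)$ and the sequence $0\to\mathcal{O}(-1)^{\oplus 2}\to E\to\mathcal{O}_C\to 0$ to show the section vanishes only at points; identification of the cokernel with $I_x$; and then the $\operatorname{Ext}$ computation, the local-freeness check, and Serre duality for $E_x$. One small wording fix: the extension $0\to\mathcal{O}\to E\to I_x\to 0$ is non-split because $\mathcal{O}\oplus I_x$ is not locally free, not merely because $E$ is assumed not to be a direct sum of line bundles.
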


Finally, let's classify the case $c_1(E)=0$ and $c_2(E)=2$. Let's first introduce stability: We call a vector bundle $E$ on $\mathbb{P}^2$ \emph{stable} if, for any coherent subsheaf $0\neq F\subset E$ with $0<\text{rank}F<\text{rank} E$, we have 
$$
\mu(F)=\frac{c_1(F)}{\text{rank}F}<\mu(E)=\frac{c_1(E)}{\text{rank}E}
$$
Given a vector bundle $E$, define $E_{\text{norm}}$ to be the unique twist $E(k_E)$ such that $-\text{rank}(E)+1\leq E\leq 0$. One has the following criterion (cf. \cite{Hoppe1984}, Lemma 2.6):
\begin{proposition}[Hoppe criterion]
Let $E$ be a rank $r$ vector bundle over a cyclic projective variety. 
If
$$
H^0\big((\wedge^q E)_{\mathrm{norm}}\big)=0
\quad \text{for } 1\le q\le r-1,
$$
then $E$ is stable. 
\end{proposition}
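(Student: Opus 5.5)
The Hoppe criterion is proved by contraposition: assume $E$ is not stable and produce a nonzero section of some $(\wedge^q E)_{\mathrm{norm}}$. Here "cyclic" means $\mathrm{Pic}\cong\mathbb{Z}$, generated by an ample $\mathcal{O}(1)$, so first Chern classes of sheaves are integers and slopes compare as rational numbers.

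Suppose $F\subset E$ is a nonzero coherent subsheaf of rank $q$ with $1\le q\le r-1$ and $\mu(F)\ge\mu(E)$. First I would normalize $F$. Replacing $F$ by its saturation (the kernel of $E\to (E/F)/\mathrm{Tor}$) keeps the rank and can only raise $c_1$, so I may assume $E/F$ is torsion free and $F$ is reflexive. Then $\det F=(\wedge^q F)^{\vee\vee}$ is a line bundle $\mathcal{O}(a)$ with $a=c_1(F)$.

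Next, the inclusion $F\hookrightarrow E$ induces $\wedge^q F\to\wedge^q E$. This map is nonzero because it is injective at the generic point, where $F$ is a rank $q$ subspace. Since $\wedge^q E$ is locally free, hence reflexive, the map factors through the reflexive hull, which gives a nonzero map $\mathcal{O}(a)\to\wedge^q E$. In other words, $H^0(\wedge^q E(-a))\neq 0$.

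It then remains to compare twists. Write $c=c_1(E)$ and note $c_1(\wedge^q E)=\binom{r-1}{q-1}c$ and $\mathrm{rank}\,\wedge^q E=\binom{r}{q}$. Hence $\mu(\wedge^q E)=qc/r=q\mu(E)$. The normalization $(\wedge^q E)_{\mathrm{norm}}=\wedge^q E(k)$ is the twist with $-\binom rq<c_1(\wedge^qE(k))\le 0$, that is, $-1<\mu(\wedge^qE)+k\le 0$. Equivalently, $k$ is the largest integer with $k\le -q\mu(E)$.

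The hypothesis $\mu(F)\ge\mu(E)$ gives $a\ge q\mu(E)$, so $-a\le -q\mu(E)$. Since $-a$ is an integer, this forces $-a\le k$. Multiplying the nonzero section of $\wedge^qE(-a)$ by a nonzero section of $\mathcal{O}(k+a)$ then yields a nonzero section of $(\wedge^q E)_{\mathrm{norm}}$. This contradicts the hypothesis, so $E$ is stable. Multiplying by a section of $\mathcal{O}(k+a)$ preserves nonvanishing because the variety is integral and $\mathcal{O}(1)$ is ample, so $\mathcal{O}(m)$ has nonzero sections for $m\ge 0$.

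The main obstacle is this integer bookkeeping. One has to read the paper's normalization convention (written loosely as $-\mathrm{rank}+1\le c_1\le 0$) correctly for $\wedge^qE$, and check that the floor in the definition of $k$ is compatible with the non-strict inequality $\mu(F)\ge\mu(E)$. Beyond that, the only other care needed is the saturation and reflexive-hull step that makes $\det F$ map into $\wedge^q E$.
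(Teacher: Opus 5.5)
The paper gives no proof of this statement; it only cites Hoppe, Lemma 2.6. Your argument is the standard proof of that lemma. A destabilizing subsheaf $F$ of rank $q$ gives a nonzero map $\det F=\mathcal{O}(a)\to\wedge^q E$, hence a nonzero section of $\wedge^qE(-a)$. Your bookkeeping is also correct: $k=\lfloor -q\mu(E)\rfloor$, and the non-strict inequality $\mu(F)\ge\mu(E)$ forces $-a\le k$.

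One justification is wrong. Ampleness of $\mathcal{O}(1)$ only guarantees nonzero sections of $\mathcal{O}(m)$ for $m\gg 0$, not for every $m\ge 0$. Your last step needs $H^0(\mathcal{O}(1))\neq 0$. This cannot be proved; it is an implicit hypothesis of the criterion. If the generator had no sections, take $E=\mathcal{O}(1)\oplus\mathcal{O}(-1)$. Then $E_{\mathrm{norm}}=E$ and $H^0(E)=0$, so the hypothesis holds, yet $\mathcal{O}(1)\subset E$ destabilizes $E$.

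You have two ways to fix this:
\begin{itemize}
\item State the assumption $H^0(\mathcal{O}(1))\neq 0$ explicitly. It holds on $\mathbb{P}^n$, which is the only case the paper uses.
\item Drop the assumption and note that your argument then proves the variant requiring $H^0(\wedge^qE(j))=0$ for all $j\le k$.
\end{itemize}

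Similarly, identifying $(\wedge^qF)^{\vee\vee}$ with a line bundle $\mathcal{O}(a)$ uses that $X$ is smooth, or at least locally factorial, so that rank-one reflexive sheaves are invertible. You should say so, although it is automatic on $\mathbb{P}^2$.
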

In our case, $\mathbb{P}^2$ is cyclic projective; $c_1(E)=0$, so $E_{\text{norm}}$. By computation of the Euler characteristic above: $H^0(E)=\chi(E)=0$, so by the Hoppe criterion, $E$ is a stable 2-bundle.

Now we show that any stable 2-bundle $E$ with $c_1(E)=0$ and $c_2(E)=2$ satisfies our cohomology vanishing condition. For such $E$, firstly, we have $H^0(E)=0$ since any global section will induce $\mathcal{O}_{\mathbb{P}^2}$ a subsheaf of $E$. However, $\mu(\mathcal{O}_{\mathbb{P}^2})=0=\mu(E)$ will make $E$ unstable. Now, for any $n\geq 0$, there is an injection $\mathcal{O}_{\mathbb{P}^2}\hookrightarrow\mathcal{O}_{\mathbb{P}^2}(n)$ by choosing any global section of $\mathcal{O}_{\mathbb{P}^2}(n)$. Twisted by $E(-n)$, the injection becomes $E(-n)\hookrightarrow E$. Since $H^0(E)=0$, we see that $H^0(E(-n))=0$ as well for $n\geq 0$.

On the other hand, $E^\vee=E\otimes(\det E)^{-1}=E$ since $c_1(E)=0$. By Serre duality, $H^2(E(-1))=H^0(E^\vee(-2))=H^0(E(-2))=0$, $H^2(E)=H^0(E^\vee(-3))=H^0(E(-3))=0$. By the computation of the Euler characteristic above, we see that $\chi(E(n))=n^2+3n$, so $\chi(E)=0$. Now $H^0(E)=H^2(E)=0$, we see that $H^1(E)=0$ as well.

Now we have $H^2(E(-1))=H^1(E)=0$, and this gives Castelnuovo–Mumford 1-regularity of $E$ (cf. \cite{EisenbudSyzygies} Theorem 4.2). so for $n\geq 0$ , there is $H^1(E(n))=H^2(E(n))=0$. Now since $E^\vee=E$, Serre duality also gives $H^{<0}(E_x(-n-3))=0$ for $n\geq 0$. Hence, we have proved

\begin{proposition}\label{prop4.13}
The family of non-splitting 2-bundles $E$ on $\mathbb{P}^2$ with $H^{>0}(E(n))=H^{<2}(E(-n-3))=0$ for $n\geq 0$ and $c_1(E)=0,\ c_2(E)=2$ coincides with the family of all stable 2-bundles with $c_1(E)=0,\ c_2(E)=2$.
\end{proposition}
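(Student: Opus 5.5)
We prove Proposition \ref{prop4.13} by showing the two inclusions separately. Both directions rest on the Riemann--Roch computation already carried out for rank $2$, $c_1=0$, which gives $\chi(E(n))=n^2+3n+2-c_2$, so $\chi(E(n))=n^2+3n$ when $c_2=2$. Throughout we also use the self-duality $E^\vee\cong E\otimes(\det E)^{-1}\cong E$, valid for any rank $2$ bundle with $c_1=0$.

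\textbf{First inclusion.} Let $E$ be non-splitting with the vanishing conditions and $c_1=0$, $c_2=2$. The vanishing $H^{>0}(E)=0$ gives $h^0(E)=\chi(E)=0$. Since $\mathrm{rank}\,E=2$ and $c_1(E)=0$, we have $E_{\text{norm}}=E$, and the only exterior power to test is $\wedge^1E=E$. The Hoppe criterion therefore applies directly and shows $E$ is stable. This direction should present no difficulty.

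\textbf{Converse.} Let $E$ be stable of rank $2$ with $c_1=0$, $c_2=2$. We verify the vanishing conditions in four steps.
\begin{enumerate}
\item Any nonzero section would give $\mathcal{O}_{\mathbb{P}^2}\hookrightarrow E$ with $\mu(\mathcal{O}_{\mathbb{P}^2})=0=\mu(E)$, contradicting stability. Hence $h^0(E)=0$. Composing with the inclusions $E(-n)\hookrightarrow E$ coming from nonzero sections of $\mathcal{O}(n)$ gives $h^0(E(-n))=0$ for all $n\ge 0$.
\item By Serre duality and $E^\vee\cong E$, we get $h^2(E(m))=h^0(E(-m-3))$. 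This vanishes for $m\ge -3$, in particular for $m=-1$ and $m=0$.
\item Since $\chi(E)=0$ and $h^0(E)=h^2(E)=0$, we obtain $h^1(E)=0$.
\item Now $H^1(E)=H^2(E(-1))=0$ says exactly that $E$ is Castelnuovo--Mumford $1$-regular. By the regularity theorem, $H^1(E(n))=H^2(E(n))=0$ for all $n\ge 0$.
\end{enumerate}

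To finish, note that Serre duality and self-duality give $H^i(E(-n-3))\cong H^{2-i}(E(n))^{\vee}$. So the conditions $H^{<2}(E(-n-3))=0$ for $n\ge 0$ are equivalent to the vanishing $H^{>0}(E(n))=0$ already obtained. Finally, $E$ is non-splitting because stability forbids any line subbundle of degree $\ge 0$. A split $\mathcal{O}(a)\oplus\mathcal{O}(-a)$ has such a subbundle, namely $\mathcal{O}(|a|)$.

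\textbf{Main obstacle.} The step needing the most care is deriving the full range of vanishing from only finitely many groups. The regularity argument handles this, but it relies on $h^1(E)=0$, which comes from the Euler characteristic together with the stability-based vanishing of $h^0$ and $h^2$. We should also check that regularity indeed yields $H^1(E(n))=0$ for $n\ge 0$ and not just $n\ge 1$. It does: $1$-regularity means $H^i(E(1-i))=0$ for $i>0$, that is, $H^1(E)=0$ and $H^2(E(-1))=0$. The theorem then gives $H^i(E(k-i))=0$ for all $k\ge 1$ and $i>0$, which covers $H^1(E(n))$ and $H^2(E(n))$ for every $n\ge 0$.
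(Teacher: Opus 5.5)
Your proposal is correct and follows the paper's argument essentially step for step: Hoppe's criterion with $h^0(E)=\chi(E)=0$ for one inclusion, and for the other, stability giving $h^0(E(-n))=0$, then Serre duality with $E^\vee\cong E$, then $\chi(E)=0$, then Castelnuovo--Mumford $1$-regularity. You also explicitly check two points the paper leaves implicit: that a stable bundle cannot split, and that regularity covers the case $n=0$.
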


In \cite{OSS1980} Example 4.3.2, the moduli space $M_{\mathbb{P}^2}(0,2)$ of stable 2-bundles with $c_1(E)=0,\ c_2(E)=2$ is constructed. It is also proved here that there is a bijection $M_{\mathbb{P}^2}(0,2)\rightarrow \mathbb{P}(S_3^3)$ with the set of nonsingular conics in $(\mathbb{P}^2)^*$. This is a five dimensional variety.

In summary, we have shown the following theorem.
\begin{theorem}\label{theo4.14}
Suppose $E$ is a non-splitting 2-bundle on $\mathbb{P}^2$ with 
$$
H^{>0}(E(n))=H^{<2}(E(-n-3))=0\ \ \text{for}\ \ n\geq 0
$$
then one of the following holds:
\begin{enumerate}
\item $E=T_{\mathbb{P}^2}(-1)\ \text{or}\ T_{\mathbb{P}^2}(-2)$.
\item There is $x\in\mathbb{P}^2$ such that $E$ is the unique nontrivial extension of  $I_x$ by $\mathcal{O}_{\mathbb{P}^2}$.
\item $E$ is a stable 2-bundle with $c_1(E)=0$, $c_2(E)=2$.
\end{enumerate}
Conversely, any 2-bundle satisfying one of these three conditions has the cohomology vanishing above.
\end{theorem}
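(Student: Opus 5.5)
The theorem collects what was proved in this subsection, so the plan is to assemble the pieces, splitting by $c_1(E)$. One point needs care: for a rank $2$ bundle, ``non-splitting'' should be read as ``indecomposable'', since a decomposable rank $2$ bundle is a sum of two line bundles. This lets us use Corollary \ref{coro4.9}.

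\textbf{Forward direction.} Let $E$ be non-splitting with the stated vanishing. By the splitting-type lemma, $E|_L\cong\mathcal{O}_L(a)\oplus\mathcal{O}_L(b)$ with $-2\le a,b\le 2$ for every line $L$. Corollary \ref{coro4.9} forces the multiplicities to satisfy $a_{-1}+a_0>0$ and $a_0+a_1>0$, so the only possible splitting types are $(0,2),(0,1),(0,0),(0,-1),(0,-2),(1,-1)$. Hence $c_1(E)\in\{-2,\dots,2\}$, and we split into two cases.

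\emph{Case $c_1(E)\neq 0$.} The splitting type on every line is $(0,c_1(E))$, so $E$ is uniform. By Van de Ven's theorem, $E\cong T_{\mathbb{P}^2}(m)$. The constraint $c_1=3-2m\in\{\pm1,\pm2\}$ together with integrality gives $m\in\{-1,-2\}$. This is case (1).

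\emph{Case $c_1(E)=0$.} Riemann--Roch gives $\chi(E)=2-c_2$, and $\chi(E)=h^0(E)\ge 0$ by the vanishing, so $c_2\le 2$. The jumping lines form a curve of degree $c_2$, so $c_2\ge 0$. We then treat the three values of $c_2$:
\begin{itemize}
\item If $c_2=0$, there are no jumping lines, so $E$ is uniform. No $T_{\mathbb{P}^2}(m)$ has $c_1=0$, so $E$ splits, which is excluded.
\item If $c_2=1$, Proposition \ref{prop4.11} gives case (2).
\item If $c_2=2$, then $h^0(E)=\chi(E)=0$ and $E=E_{\mathrm{norm}}$. Hoppe's criterion then shows $E$ is stable, which is case (3), as in Proposition \ref{prop4.13}.
\end{itemize}

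\textbf{Converse.} For case (1), read the vanishing directly off the cohomology table of $T_{\mathbb{P}^2}$ with the shifts $k=n-1,n-2$ and $k=-n-4,-n-5$. For case (2), use the computation preceding Proposition \ref{prop4.11}. It shows $E_x$ is locally free and $H^{>0}(E_x(n))=0$ for $n\ge0$, and the self-duality $E_x^\vee\cong E_x$ together with Serre duality gives the other half. For case (3), Proposition \ref{prop4.13} gives the vanishing via stability, $\chi(E)=0$, and Castelnuovo--Mumford regularity.

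\textbf{Main obstacle.} The hardest step is the $c_2=1$ identification, which requires showing that the unique section of $E$ vanishes only at a point. This was handled through general sections of the globally generated bundle $E(1)$ and the conic $C$, and I would cite that argument rather than redo it. Everything else is bookkeeping of Chern classes and splitting types.
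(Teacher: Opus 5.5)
Your proposal is correct and is essentially the paper's own argument. You restrict the splitting types via Corollary~\ref{coro4.9} and use Van de Ven when $c_1\neq 0$; for $c_1=0$ you use Riemann--Roch and the jumping-line curve to get $c_2\in\{0,1,2\}$, then Proposition~\ref{prop4.11} and Hoppe/Proposition~\ref{prop4.13}, with the same converse checks.

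Two small fixes. First, $c_1(T_{\mathbb{P}^2}(m))=3+2m$, not $3-2m$ (a slip carried over from the text); with $3-2m$ your integrality step would give $m\in\{1,2\}$ instead of $m\in\{-1,-2\}$. Second, the statement that the jumping lines form a curve of degree $c_2$ needs the generic splitting type to be $(0,0)$. This holds because a bundle of type $(1,-1)$ on every line would be uniform with $c_1=0$, hence split by Van de Ven.
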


\subsection{Construction of higher rank maximal Cohen-Macaulay sheaves}Now we consider $E$ with rank $\geq 3$. It's natural to ask if we can construct some indecomposable vector bundle for each $r\in\mathbb{Z}^+$. For this purpose, we need to introduce a large family of bundles: the \emph{Steiner bundles}. A Steiner bundle on $\mathbb{P}^2$ is a bundle $E$ that admits a two step resolution
$$0\rightarrow\mathcal{O}_{\mathbb{P}^2}(-1)^{\oplus t}\rightarrow\mathcal{O}_{\mathbb{P}^2}^{\oplus t+r}\rightarrow E\rightarrow 0
$$
In order to construct higher rank indecomposable vector bundles satisfying our cohomology vanishing condition, we should ask three questions about the general quotient of the map (equivalently, a matrix of linear forms) $0\rightarrow\mathcal{O}_{\mathbb{P}^2}(-1)^{\oplus t}\rightarrow\mathcal{O}_{\mathbb{P}^2}^{\oplus t+r}$:
\begin{enumerate}
    \item When is a general quotient locally free?
    \item When is a general quotient indecomposable?
    \item When does a general quotient satisfy the cohomology vanishing condition above?
\end{enumerate}

We start from (1). By \cite{Harris1992} Proposition 12.2, we have the following proposition.
\begin{proposition}\label{prop4.15}
If $r\geq 2$, a general quotient of $\mathcal{O}_{\mathbb{P}^2}(-1)^{\oplus t}\rightarrow\mathcal{O}_{\mathbb{P}^2}^{\oplus t+r}$ is locally free.
\end{proposition}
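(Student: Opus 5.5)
The plan is to show that the quotient of a general map $\phi:\mathcal{O}_{\mathbb{P}^2}(-1)^{\oplus t}\rightarrow\mathcal{O}_{\mathbb{P}^2}^{\oplus t+r}$ is locally free by controlling the degeneracy loci of $\phi$, viewed as a $(t+r)\times t$ matrix of linear forms. Since $\mathcal{O}(-1)^{\oplus t}$ and $\mathcal{O}^{\oplus t+r}$ are locally free, the cokernel $E$ is locally free of rank $r$ at a point $p$ exactly when the fiber map $\phi(p):\mathbb{C}^t\rightarrow\mathbb{C}^{t+r}$ is injective, i.e. has rank $t$. (When $\phi(p)$ is injective, $\phi$ is locally a split injection of free modules near $p$.) It is also injective as a sheaf map whenever $\phi$ has full rank at some point, since $\mathcal{O}(-1)^{\oplus t}$ is torsion free. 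So it suffices to show that for general $\phi$ the locus
$$D=\{p\in\mathbb{P}^2 : \operatorname{rank}\phi(p)\leq t-1\}$$
is empty.

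The key step is a dimension count via an incidence variety, or equivalently a Kleiman--Bertini argument. Let $V=H^0(\mathcal{O}_{\mathbb{P}^2}(1))$, so that $\phi\in \operatorname{Hom}(\mathbb{C}^t,\mathbb{C}^{t+r})\otimes V$. Since $\mathcal{H}om(\mathcal{O}(-1)^{t},\mathcal{O}^{t+r})=\mathcal{O}(1)^{t(t+r)}$ is globally generated, the evaluation map from the parameter space to the fiber $\operatorname{Hom}(\mathbb{C}^t,\mathbb{C}^{t+r})$ at each point $p$ is surjective and linear. Consider the incidence variety
$$Z=\{(\phi,p):\operatorname{rank}\phi(p)\leq t-1\}\subset \operatorname{Hom}(\mathbb{C}^t,\mathbb{C}^{t+r})\otimes V\times\mathbb{P}^2.$$
Over each $p$, the fiber of $Z$ is the preimage under this surjective linear map of the determinantal variety of $(t+r)\times t$ matrices of rank $\leq t-1$. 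That variety has codimension $(t+r-(t-1))(t-(t-1))=r+1$. Hence $Z$ has codimension $r+1$ in the product, and
$$\dim Z=\dim(\text{parameter space})+2-(r+1).$$
If $r\geq 2$ then $r+1\geq 3>2$, so $Z$ cannot dominate the parameter space. Therefore the general $\phi$ has $D=\emptyset$, which gives local freeness. This is the standard argument of \cite{Harris1992} Proposition 12.2, which one may simply cite.

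The main obstacle is justifying the codimension count for the fibers of $Z$. This needs two facts: the surjectivity of evaluation, which holds by global generation of $\mathcal{O}(1)$, and the classical codimension $(m-k)(n-k)$ of the rank $\leq k$ locus in $m\times n$ matrices. With both in hand, the argument reduces to the dimension comparison $r+1>\dim\mathbb{P}^2=2$, which is exactly where the hypothesis $r\geq 2$ is needed. For $r=1$ the degeneracy locus is generically a finite set of $\binom{t+1}{2}$ points, so the quotient is not locally free. One should also note that this open condition is nonempty, which follows because $Z$ fails to dominate, so the set of good $\phi$ is a nonempty Zariski open subset.
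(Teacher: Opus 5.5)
Your proposal is correct and is essentially the paper's approach: the paper just cites \cite{Harris1992} Proposition 12.2, which gives the codimension $(m-k)(n-k)$ of the rank-$\leq k$ locus of $m\times n$ matrices. Your incidence-variety count over $\mathbb{P}^2$ supplies the step the paper leaves implicit. That step uses surjectivity of evaluation, which follows from global generation of $\mathcal{O}_{\mathbb{P}^2}(1)$, and the comparison $r+1>2$ to show that the general cokernel is locally free.
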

For (2), the lack of a useful criterion of indecomposability inspires us to consider a stronger condition, namely stability. It's obvious that a stable bundle cannot be decomposable. Now \cite{coskun2025stability} Theorem 5.1 gives you the following proposition.
\begin{proposition}\label{prop4.16}
If $ 2\leq r\leq \frac{1+\sqrt{5}}{2}t$, a general quotient of $\mathcal{O}_{\mathbb{P}^2}(-1)^{\oplus t}\rightarrow\mathcal{O}_{\mathbb{P}^2}^{\oplus t+r}$ is stable.
\end{proposition}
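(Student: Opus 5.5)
The plan is to translate stability of a Steiner bundle into stability of a representation of the $3$-Kronecker quiver. Then I would invoke the theory of general representations of quivers (Kac, Schofield, King) for the dimension vector $(t,t+r)$.

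\textbf{Step 1: pass to Kronecker modules.} Fix $V=H^0(\mathcal{O}_{\mathbb{P}^2}(1))^\vee$, which is $3$-dimensional. A map $\mathcal{O}_{\mathbb{P}^2}(-1)^{\oplus t}\to\mathcal{O}_{\mathbb{P}^2}^{\oplus t+r}$ is the same datum as a linear map $A\otimes V^\vee\to B$ with $\dim A=t$ and $\dim B=t+r$. Equivalently, it is a representation $\rho$ of the Kronecker quiver $K_3$ with dimension vector $(t,t+r)$. The functor $\rho\mapsto E_\rho=\mathrm{coker}$ is fully faithful on injective Steiner presentations, since the Steiner data is recovered from $E$ as $A=H^1(E\otimes\Omega(\,\cdot\,))$ and $B=H^0(E)$ via the Beilinson monad. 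The open conditions needed are already available: by Proposition \ref{prop4.15} the general $\rho$ gives a locally free $E_\rho$ when $r\ge 2$.

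\textbf{Step 2: numerics.} The Euler form of $K_3$ is $q(a,b)=a^2+b^2-3ab$, so
$$q(t,t+r)=r^2-rt-t^2 .$$
This is $\le 0$ exactly when $r\le\frac{1+\sqrt5}{2}t$. Thus the hypothesis of Proposition \ref{prop4.16} says precisely that $(t,t+r)$ is an imaginary root; for $r\ge 2$ it is not isotropic. By Kac's theorem and Schofield's results, a general representation of a non-isotropic imaginary root in the fundamental region is a Schur representation. Moreover, by King's criterion it is $\theta$-stable for the canonical weight $\theta=\langle(t,t+r),-\rangle-\langle-,(t,t+r)\rangle$. The fundamental-region condition reduces to $2b\le 3a$ and $2a\le 3b$ after applying reflections, which should hold here or be arranged by reflecting. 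Concretely, I would verify the stability inequality for every subrepresentation dimension vector $(a',b')$ via Schofield's criterion: the general representation is stable iff $\langle\beta,\alpha-\beta\rangle>0$ for all general subdimension vectors $\beta$. The golden-ratio bound is exactly what makes this inequality survive.

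\textbf{Step 3: Kronecker stability gives slope stability.} Suppose $F\subset E$ is a saturated subsheaf with $0<\mathrm{rank}F<r$ and $\mu(F)\ge\mu(E)=t/r$; we may take $F$ reflexive, hence a bundle on $\mathbb{P}^2$. I would set $B'=\mathrm{im}(H^0(E)\to\ldots)$, more precisely the subspace of $B$ generating $F$ generically. Let $A'$ be the preimage making $A'\otimes V^\vee\to B'$ a subrepresentation. Comparing Chern classes of the subsheaf generated by $B'$ with those of $F$ should give $c_1(F)\le\dim A'$ and $\mathrm{rank}F\ge\dim B'-\dim A'$. Then $\mu(F)\ge\mu(E)$ forces $\theta(\dim A',\dim B')\ge 0$, contradicting stability of $\rho$. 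This is Brambilla's argument that a Steiner bundle from a $\theta$-stable Kronecker module is slope stable.

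\textbf{Main obstacle.} I expect Step 3 to be the hardest part. The subtlety is that $F$ need not be globally generated by its sections in $B$, so the Chern class comparison must use the Beilinson resolution of $F$ or a quotient-based argument with $E\to E/F$. Dualizing to quotients, which are again generated by $B$, is what I would try first: for a torsion-free quotient $E\to G$, the image of $B$ in $H^0(G)$ generates $G$, and $c_1(G)$ is bounded by the number of relations coming from $A$. A secondary difficulty in Step 2 is checking Schofield's inequality uniformly in $(a',b')$. If that proves messy, I would simply cite \cite{coskun2025stability} Theorem 5.1, which is exactly this statement.
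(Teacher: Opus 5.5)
The paper gives no independent argument for Proposition \ref{prop4.16}: it simply quotes \cite{coskun2025stability}, Theorem 5.1. Your closing fallback is therefore exactly the paper's proof. The route you actually develop, however, has a genuine gap and does not prove the statement on its own.

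\textbf{What Steps 1 and 2 give.} They are sound, and they explain something the paper leaves implicit. The hypothesis $r\le\frac{1+\sqrt5}{2}t$ is precisely $q(t,t+r)=r^2-rt-t^2\le 0$, and the same computation for $K_{n+1}$ gives the bound in Proposition \ref{prop5.5}. Every positive root of $K_3$ is Schur: $K_3$ has no isotropic roots, Kac's theorem applies in the fundamental region, and sink/source reflection functors preserve Schur-ness. Hence a general module is stable for Schofield's weight, i.e.\ $(t+r)\dim A'<t\dim B'$ for every nonzero proper subrepresentation. By full faithfulness, the general Steiner bundle is simple. (Minor point: Schofield's criterion is $\langle\beta,\alpha\rangle-\langle\alpha,\beta\rangle>0$ for $0\neq\beta\hookrightarrow\alpha$, $\beta\neq\alpha$, not $\langle\beta,\alpha-\beta\rangle>0$.) Simplicity and Kronecker stability are weaker than slope stability, though, and Step 3 does not bridge the difference.

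\textbf{Where Step 3 fails.} Take a saturated $F\subset E$ with $\mu(F)\ge t/r$. The only Kronecker data available are $B'=H^0(F)\subseteq B$ and $A'=\{a\in A:\rho(a\otimes V^\vee)\subseteq B'\}$.
\begin{enumerate}
\item Nothing forces a destabilizing subsheaf to have sections. So $B'$ may be zero or fail to generate $F$ generically, and then $(A',B')$ says nothing about $F$.
\item Suppose $B'\otimes\mathcal{O}\to F$ is generically surjective. Its kernel is $K=(B'\otimes\mathcal{O})\cap\rho(A\otimes\mathcal{O}(-1))$. This contains $A'\otimes\mathcal{O}(-1)$ but is in general strictly larger, since it contains local sections $\sum_i f_ia_i$ with non-constant $f_i$.
\item Since $K\subseteq\mathcal{O}(-1)^{\oplus t}$, we have $c_1(K)\le-\mathrm{rk}K$. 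What you really obtain is $\mathrm{rk}F=\dim B'-\mathrm{rk}K\le\dim B'-\dim A'$ and $c_1(F)\ge-c_1(K)\ge\mathrm{rk}K\ge\dim A'$. These are the reverse of the inequalities $c_1(F)\le\dim A'$ and $\mathrm{rk}F\ge\dim B'-\dim A'$ that your contradiction needs.
\end{enumerate}
Dualizing to a torsion-free quotient $E\to G$ does not help. The kernel of $B_G\otimes\mathcal{O}\to G$ is again not generated by the constant relations coming from $A$. So neither $c_1(G)\ge\dim(A/A')$ nor $\mathrm{rk}G\le\dim B_G-\dim(A/A')$ follows.

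Passing from stability of the Kronecker module to slope stability of its cokernel is not a formal consequence of the presentation. It is exactly the nontrivial content of the cited theorem, which is why the paper simply invokes it. Without that input, your argument stops at simplicity of the general Steiner bundle.
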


Finally, as explained in \cite{coskun2025stability}, the defining resolution of a Steiner bundle
is the Gaeta resolution for $E$ . Hence, $E$ is a general member of the stack of prioritary sheaves with
Chern character $\mathcal{C}h(E)$ Since the general bundle in these stacks has at most one nonzero cohomology
group, we have the following proposition.
\begin{proposition}\label{prop4.17}
If $r\geq 2$, a general quotient of $\mathcal{O}_{\mathbb{P}^2}(-1)^{\oplus t}\rightarrow\mathcal{O}_{\mathbb{P}^2}^{\oplus t+r}$ has natural cohomology.
\end{proposition}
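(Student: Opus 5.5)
We argue through the stack of prioritary sheaves, exactly as the paragraph preceding the statement suggests. The approach is to show that a general Steiner quotient is a general point of an irreducible family on which natural cohomology holds generically.

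Fix $r\geq 2$ and $t\geq 0$, and let $\mathcal{S}$ be the space of $t\times(t+r)$ matrices of linear forms, an irreducible affine space. By Proposition \ref{prop4.15} there is a dense open subset $U\subset\mathcal{S}$ on which the cokernel $E$ is locally free of rank $r$. Every such $E$ has the same Chern character $\mathcal{C}h=\mathcal{C}h(\mathcal{O}^{t+r})-\mathcal{C}h(\mathcal{O}(-1)^{t})$. "Natural cohomology" means that for every $n$ at most one of the groups $H^i(E(n))$ is nonzero. Since $h^i(E(n))$ is upper semicontinuous in families, for each fixed $n$ the locus in $U$ where natural cohomology holds for $E(n)$ is open. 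It therefore suffices to show it is nonempty for each $n$.

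First we reduce to finitely many twists, using the resolution directly. For $n\geq 0$, twisting the sequence by $\mathcal{O}(n)$ and using $H^1(\mathcal{O}(n-1))=H^2(\mathcal{O}(n-1))=0$ gives $H^{1}(E(n))=H^2(E(n))=0$ for all $n\geq 0$, for every $E$ in $U$. For $n\leq -3$, we have $H^0(\mathcal{O}(n))=0$ and $H^1(\mathcal{O}(n))=0$, so $H^0(E(n))=0$. Moreover $H^1(E(n))$ is the kernel of $H^2(\mathcal{O}(n-1))^t\to H^2(\mathcal{O}(n))^{t+r}$, which by Serre duality is dual to the cokernel of $H^0(\mathcal{O}(-n-3))^{t+r}\to H^0(\mathcal{O}(-n-2))^t$ induced by the transpose matrix. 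For $n=-1,-2$ the bundle $\mathcal{O}(n-1)$ has at most $H^2$ and $\mathcal{O}(n)$ has no cohomology, so $E(n)$ has only $H^1$ (for $n=-1$) or only $H^2$ (for $n=-2$); these cases are automatic. The only genuine condition is therefore that for all $m\geq 0$ the multiplication map
\[
H^0(\mathcal{O}(m))^{t+r}\xrightarrow{\ A^T\ }H^0(\mathcal{O}(m+1))^{t}
\]
has maximal rank, i.e.\ is injective or surjective. This is exactly the statement that $E(-m-3)$ has only one of $H^1,H^2$ nonzero.

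This maximal-rank statement is the main obstacle, since it must hold for infinitely many $m$ simultaneously. The plan is to invoke the prioritary stack as in \cite{coskun2025stability}. Steiner bundles satisfy $\mathrm{Ext}^2(E,E(-1))=0$, because $\mathrm{Hom}(E,E(-2))=0$ follows from the resolution. Hence they are prioritary, and the stack of prioritary sheaves of fixed Chern character on $\mathbb{P}^2$ is irreducible (Hirschowitz–Walter). The locus of Steiner bundles in it is open, since admitting such a Gaeta resolution is an open cohomological condition, and therefore dense. By the Göttsche–Hirschowitz theorem, the general prioritary sheaf has natural cohomology in every twist at once. It follows that the general point of $U$ has natural cohomology for all $n$ simultaneously, which resolves the worry about intersecting countably many open sets.

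As a sanity check, or as a fallback, one can show directly that the map $A^T$ above has maximal rank for a generic $A$ once $m$ is large. For $m\gg0$ the map is surjective, by Castelnuovo–Mumford regularity of the cokernel sheaf, which leaves only finitely many $m$. For each of these finitely many $m$ one exhibits a single matrix $A$ with maximal rank, for instance a block-diagonal matrix built from a Koszul-type example. Openness then gives the result on a dense open set.
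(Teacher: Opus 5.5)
Your argument is correct and is essentially the paper's: the Steiner presentation makes a general such $E$ a general point of the irreducible stack of prioritary sheaves with Chern character $\mathcal{C}h(E)$, and the G\"ottsche--Hirschowitz theorem then gives natural cohomology. Your check that $\mathrm{Ext}^2(E,E(-1))\cong\mathrm{Hom}(E,E(-2))^*=0$ and your reduction to maximal rank of $H^0(\mathcal{O}(m))^{t+r}\to H^0(\mathcal{O}(m+1))^{t}$ supply details the paper leaves to \cite{coskun2025stability}.

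Two small fixes. First, $E(-1)$ has no cohomology at all and $E(-2)$ has only $H^1\cong\mathbb{C}^t$, not $H^2$. Second, the ``all twists at once'' claim is best justified by noting that injectivity of that map propagates to smaller $m$, while surjectivity at some $m\geq 0$ propagates to larger $m$ by Castelnuovo--Mumford regularity of the kernel $E^\vee$, not of the cokernel. So only finitely many twists impose conditions, and natural cohomology in every twist is an open condition on $U$.
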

Now, in order to further control the cohomology of the Steiner bundle, we apply the long exact sequence of cohomology to its defining exact sequence. It's easy to see that $H^{>0}(E(n))=0$ for $n\geq -1$. However, by computing the Euler characteristic:
\begin{align}\notag
\chi(E(n))&=(r+t)\chi(\mathcal{O}_{\mathbb{P}^2}(n))-t\chi(\mathcal{O}_{\mathbb{P}^2}(n-1))\\\notag
&=(r+t)\frac{(n+2)(n+1)}{2}-t\frac{n(n+1)}{2}\\\notag
&=\frac{r}{2}n^2+(\frac{3}{2}+t)n+r+t
\end{align}
One sees that $\chi(E(-3))=r-2t$, and it is negative if $r\leq \frac{1+\sqrt{5}}{2}t$. Hence, we turn to consider if $E(-1)$ could satisfy our cohomology vanishing.

Let $n=-4$, we see that $\chi(E(-4))=3r-3t$. Now $\chi(E(-2))=-t<0$, and the function $\chi(E(n))$ is a quadratic function, we can ensure $\chi(E(n))>0$ for any $n\leq -4$ if $\chi(E(-4))=3r-3t>0$. Then the naturality of the cohomology tells us $H^{<2}(E(n))=0$ for $n\leq -4$.

To summarize our idea, replacing $E$ with $E(-1)$, we have the following theorem:
\begin{proposition}\label{prop4.18}
Let positive integers $r,t$ satisfy $r\geq 2$ and $t< r<\frac{\sqrt5+1}{2}t$. Let $\phi$ be a general $(t+r)\times t$ matrix of linear forms on $\mathbb{P}^2$ and let $E$ be the quotient sheaf defined by the sequence
$$0\rightarrow\mathcal{O}_{\mathbb{P}^2}(-2)^t\xrightarrow{\phi}\mathcal{O}_{\mathbb{P}^2}(-1)^{t+r}\rightarrow E\rightarrow 0
$$
then $E$ is a stable $($hence indecomposable$)$ bundle with $H^{>0}(E(n))=H^{<2}(E(-n-3))=0$ for $n\geq 0$.
\end{proposition}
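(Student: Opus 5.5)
Throughout, write $E' := E(1)$, so that $E'$ is the Steiner bundle
$$0\rightarrow\mathcal{O}_{\mathbb{P}^2}(-1)^{\oplus t}\xrightarrow{\phi}\mathcal{O}_{\mathbb{P}^2}^{\oplus t+r}\rightarrow E'\rightarrow 0 .$$
The plan is to get the qualitative properties of $E$ from Propositions \ref{prop4.15}, \ref{prop4.16} and \ref{prop4.17} for a general $\phi$, and then verify the two vanishing ranges with the long exact cohomology sequence of this resolution plus an Euler characteristic computation. The relevant conditions on $\phi$ are all open and nonempty in the irreducible affine space of $(t+r)\times t$ matrices of linear forms. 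So a general $\phi$ satisfies them simultaneously, with one caveat addressed below.

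\emph{Step 1: qualitative properties.} Since $r\geq 2$, Proposition \ref{prop4.15} makes $E'$, hence $E$, locally free of rank $r$. Since $2\leq r<\frac{1+\sqrt5}{2}t$, Proposition \ref{prop4.16} makes $E'$ stable. Twisting by a line bundle preserves slope stability, so $E$ is stable and in particular indecomposable. By Proposition \ref{prop4.17}, $E'$ has natural cohomology.

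\emph{Step 2: the positive range.} We need $H^{>0}(E(n))=H^{>0}(E'(m))=0$ for $m=n-1\geq -1$. From the long exact sequence:
\begin{itemize}
\item $H^1(E'(m))$ sits between $H^1(\mathcal{O}(m))^{\oplus t+r}=0$ and $H^2(\mathcal{O}(m-1))^{\oplus t}$. The latter vanishes because $m-1\geq -2$.
\item $H^2(E'(m))$ is a quotient of $H^2(\mathcal{O}(m))^{\oplus t+r}=0$.
\end{itemize}

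\emph{Step 3: the negative range.} We need $H^{<2}(E(-n-3))=H^{<2}(E'(k))=0$ for $k=-n-4\leq -4$.
\begin{itemize}
\item For $H^0$: $H^0(E'(k))$ sits between $H^0(\mathcal{O}(k))^{\oplus t+r}$ and $H^1(\mathcal{O}(k-1))^{\oplus t}$, both zero for $k\leq -1$.
\item For $H^1$ we use natural cohomology. Factoring the Euler characteristic gives
$$\chi(E'(k))=(r+t)\tfrac{(k+1)(k+2)}{2}-t\tfrac{k(k+1)}{2}=\tfrac{(k+1)(rk+2r+2t)}{2},$$
whose roots are $k=-1$ and $k=-2-\tfrac{2t}{r}$. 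The hypothesis $t<r$ places the second root in $(-4,-3)$, so $\chi(E'(k))>0$ for all $k\leq -4$; for instance $\chi(E'(-4))=3(r-t)$. Natural cohomology says at most one of $h^0,h^1,h^2$ of $E'(k)$ is nonzero. Since $\chi>0$, the nonzero one is $h^0$ or $h^2$, so $h^1(E'(k))=0$ (and indeed $h^2$ carries all of $\chi$, as $h^0=0$).
\end{itemize}

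\emph{Main obstacle.} The delicate point is Step 3's use of Proposition \ref{prop4.17}. It is stated for a general quotient, and one must make sure "natural cohomology" holds for all twists $E'(k)$, $k\leq -4$, simultaneously, not just one twist at a time.

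Naively this is a countable intersection of dense opens, which still contains very general $\phi$ over $\mathbb{C}$ but is weaker than "general". I would first try to reduce to finitely many twists. The approach is to check $H^1(E'(-4))=0$ and then propagate downward via
$$0\rightarrow E'(k-1)\rightarrow E'(k)\rightarrow E'(k)|_L\rightarrow 0 .$$
This requires $H^0(E'(k)|_L)=0$ for $k\leq -4$ on a general line $L$. Restricting the Steiner resolution to $L$ shows $E'|_L$ is a quotient of $\mathcal{O}_L^{\oplus t+r}$ with $c_1=t$, so its splitting degrees are nonnegative. Vanishing of $H^0(E'(k)|_L)$ therefore needs the splitting degrees of $E'|_L$ to be at most $3$, which holds for the balanced generic splitting when $t\leq 3r$; this is guaranteed by $t<r$.

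If this propagation argument fails, I would fall back on the Serre-dual statement $H^1(E'(k))\cong H^1(E'^\vee(-3-k))$. The plan there is to use Castelnuovo–Mumford regularity of $E'^\vee$, which is the kernel of $\mathcal{O}^{\oplus t+r}\to\mathcal{O}(1)^{\oplus t}$, to deduce all higher twists from the single twist $E'^\vee(1)$.
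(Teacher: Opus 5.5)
Your proof is correct and follows the paper's own argument: local freeness, stability and natural cohomology come from Propositions \ref{prop4.15}--\ref{prop4.17}, and the twists $n\geq 0$ are handled by the long exact sequence of the resolution. The twists $E(j)$, $j\le -3$, are handled by $h^0=0$, $\chi>0$ and natural cohomology forcing $h^1=0$; your factorization $\chi(E'(k))=\tfrac{(k+1)(rk+2r+2t)}{2}$ is also the correct form of the paper's mistyped formula. The paper never addresses whether natural cohomology holds for all twists simultaneously, so your reduction to the single twist $k=-4$ is extra but sound, and the cleanest version avoids the balanced-splitting claim on a line entirely: $H^1(E'(k))^\vee$ is the degree $-k-2$ part of $\operatorname{coker}(\phi^{T}\colon S(-1)^{t+r}\to S^{t})$ with $S=\mathbb{C}[x,y,z]$, a module generated in degree $0$, so vanishing in degree $2$ gives vanishing for all $k\le -4$.
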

\begin{proof}
The vector bundle and stability statement follow from Propositions \ref{theo4.14} and \ref{prop4.15}. Now we consider the cohomology vanishing condition: by applying the long exact sequence of cohomology to this short exact sequence, one easily concludes that $H^{>0}(E(n))=0$ for $n\geq 0$. Now, by the computation above: $\chi(E(n))=\frac{r}{2}(n-1)^2+(\frac{3}{2}+t)(n-1)+r+t$. Let $n=-1,-3$, $\chi(E(-1))=-t<0$, $\chi(E(-3))=3r-3t>0$, we see from the property of quadratic functions that $\chi(E(n))>0$ for any $n\leq -3$. One can also easily see from the long exact sequence that $H^0(E)=0$ for any $n\leq 0$. Since $\chi(E(n))=h^0(E(n))-h^1(E(n))+h^2(E(n))$, we must have $h^2(E(n))>0$, so $H^2(E(n))\neq 0$ for $n\leq -3$. Finally, by Proposition \ref{prop4.16}, $E$ has natural cohomology, so this forces $H^{<0}(E(n))= 0$ for $n\leq -3$, i.e., $H^{<2}(E(-n-3))=0$ for $n\geq 0$. This completes the proof.
\end{proof}
\begin{corollary}\label{coro4.19}
For any integer $r>0$, there exists an indecomposable maximal Cohen-Macaulay sheaf on $\mathcal{N}_{3,1}$.
\end{corollary}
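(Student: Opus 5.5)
The plan is to produce, for each $r>0$, an indecomposable vector bundle $E$ of rank $r$ on $\mathbb{P}^2$ satisfying \eqref{star}, and then pass to $\mathcal{N}_{3,1}$ via $M=\pi_*f^*E$. Corollary \ref{coro4.5} shows that $M$ is maximal Cohen-Macaulay. The result then says slightly more than the statement: there is an indecomposable example of each rank $r$, since $M$ agrees with $f^*E$ over the smooth locus and so has rank $r$.

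\textbf{Indecomposability of $M$.} Proposition \ref{prop4.6} says that $M$ admits no decomposition $M=M_1\oplus M_2$ with both summands of rank $\geq 1$. To conclude that $M$ is indecomposable, I also need to exclude a nonzero summand of rank $0$. Such a summand would be torsion. But $f^*E$ is locally free, hence reflexive, so Proposition \ref{prop4.2} makes $M$ reflexive and therefore torsion-free. Hence any rank $0$ summand vanishes, and $M$ is indecomposable.

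\textbf{Construction of $E$ by rank.}
\begin{itemize}
\item For $r=1$, take $E=\mathcal{O}_{\mathbb{P}^2}$. It satisfies \eqref{star}, since $\mathcal{O}_{\mathbb{P}^2}(a)$ does exactly when $-2\leq a\leq 2$.
\item For $r=2$, take $E=T_{\mathbb{P}^2}(-1)$. By Van de Ven's theorem it is non-split. The cohomology table in the paper, or Theorem \ref{theo4.14}, shows it satisfies the vanishing. The bundle $E_x$ of Proposition \ref{prop4.11} would also work.
\item For $r\geq 3$, use Proposition \ref{prop4.18}. This requires an integer $t$ with $t<r<\frac{1+\sqrt5}{2}t$, that is, $\frac{2r}{1+\sqrt5}<t<r$.
\end{itemize}

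\textbf{Choice of $t$ for $r\geq 3$.} I would take $t=r-1$. The inequality $r-1>\frac{2r}{1+\sqrt5}$ is equivalent to
\[
r\Bigl(1-\tfrac{2}{1+\sqrt5}\Bigr)>1,
\qquad\text{i.e.}\qquad
r>\Bigl(\tfrac{1+\sqrt5}{2}\Bigr)^2=\tfrac{3+\sqrt5}{2}\approx 2.618 .
\]
This holds for all $r\geq 3$. For example, $r=3$, $t=2$ gives $2<3<3.23$.

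Proposition \ref{prop4.18} then gives a stable bundle $E$ of rank $r$ with the required vanishing. Stable bundles are indecomposable, so $E$ satisfies \eqref{star}.

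\textbf{Main obstacle.} The only step needing care is this numerical check that the admissible window for $t$ is nonempty. It fails for $r=2$ (the window is $1.24<t<2$), which is exactly why the rank $2$ classification of Theorem \ref{theo4.14} is used separately for that case. Beyond that, the argument is a direct assembly of Corollary \ref{coro4.5}, Proposition \ref{prop4.6} and Proposition \ref{prop4.18}.
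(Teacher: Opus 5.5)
Your proposal is correct and follows the paper's proof essentially verbatim. The paper treats ranks $1$ and $2$ via $\mathcal{O}_{\mathbb{P}^2}(a)$ and Theorem \ref{theo4.14}, treats rank $r\geq 3$ via Proposition \ref{prop4.18} with $t=r-1$ (its identity $\frac{\sqrt5+1}{2}(r-1)-r=\frac{\sqrt5-1}{2}\bigl(r-\frac{3+\sqrt5}{2}\bigr)>0$ is exactly your check), and gets maximal Cohen-Macaulayness and indecomposability from Corollary \ref{coro4.5} and Proposition \ref{prop4.6}. Your torsion-freeness remark ruling out rank-$0$ summands fills a point the paper leaves implicit. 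One small correction: non-splitness of $T_{\mathbb{P}^2}(-1)$ does not come from Van de Ven's theorem (which runs the other way) but is elementary, since $c_1=c_2=1$ admits no splitting $\mathcal{O}(a)\oplus\mathcal{O}(b)$.
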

\begin{proof}
For $r=1,2$, this is the consequence of Proposition \ref{prop3.14}, Corollary \ref{coro4.5}, and Theorem \ref{theo4.14} (for $r=1$, let's state again, $\pi_*f^*\mathcal{O}_{\mathbb{P}^2}(a)$ for $-2\leq a\leq 2$ satisfying the cohomology vanishing condition in Corollary \ref{coro4.5}). For $r\geq3$, by proposition \ref{prop4.17}, it suffices to show there is an integer $t$ with $t< r<\frac{\sqrt5+1}{2}t$. Since $r\geq 3$, let $t=r-1$, we see that $\frac{\sqrt5+1}{2}t-r=\frac{\sqrt5+1}{2}(r-1)-r=\frac{\sqrt5-1}{2}(r-\frac{3+\sqrt{5}}{2})>0$, so we can actually find $t$. The indecomposability follows from Proposition \ref{prop4.6}.
\end{proof}
\section{Constructions on $T^*\mathbb{P}^n\rightarrow \mathcal{N}_{n+1,1}$}\label{s5}
Now we generalize our previous constructions to the resolution $\pi:T^*\mathbb{P}^n\rightarrow \mathcal{N}_{n+1,1}$, where $\mathcal{N}_{n+1,1}$ is the variety of nilpotent $n\times n$ matrices with rank $\leq 1$. Let's consider the analog of Proposition \ref{prop4.4}:
\begin{proposition}
Suppose $f:T^*\mathbb{P}^n\rightarrow\mathbb{P}^n$ is the projection map and $\mathcal{O}_{\mathbb{P}^n}(1)$ is the sheaf of linear forms on $\mathbb{P}^n$. Then, for any vector bundle $E$ on $\mathbb{P}^n$, we have
$$R^{>0}\pi_*f^*E=0 \Longleftrightarrow H^{>0}(E(m))=0\ \ \text{for} \ \ m\geq 0
$$
\end{proposition}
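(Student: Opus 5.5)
We follow the proof of Proposition \ref{prop4.4}, replacing the surface-specific inputs with their $\mathbb{P}^n$ analogues. Since $\mathcal{N}_{n+1,1}$ is affine, the condition $R^{>0}\pi_*f^*E=0$ is equivalent to $H^{>0}(T^*\mathbb{P}^n,f^*E)=0$. Because $f$ is affine with $f_*\widetilde{\mathcal{O}}=\bigoplus_{k\ge 0}S^k(T_{\mathbb{P}^n})$ (functions on the cotangent bundle are polynomial in the fibre coordinates), the projection formula gives
$$H^{>0}(f^*E)=\bigoplus_{k\geq 0}H^{>0}\big(S^k(T_{\mathbb{P}^n})\otimes E\big).$$
So the left-hand side becomes: $H^{>0}(S^k(T_{\mathbb{P}^n})\otimes E)=0$ for all $k\ge 0$.

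Next we use the Euler sequence
$$0\rightarrow\mathcal{O}_{\mathbb{P}^n}\rightarrow\mathcal{O}_{\mathbb{P}^n}(1)^{\oplus n+1}\rightarrow T_{\mathbb{P}^n}\rightarrow 0.$$
Taking $S^m$ of a short exact sequence $0\to A\to B\to C\to 0$ of bundles with $A$ a line bundle gives a short exact sequence
$$0\rightarrow A\otimes S^{m-1}B\rightarrow S^mB\rightarrow S^mC\rightarrow 0.$$
With $A=\mathcal{O}_{\mathbb{P}^n}$ this reads
$$0\rightarrow S^{m-1}(\mathcal{O}(1)^{\oplus n+1})\rightarrow S^m(\mathcal{O}(1)^{\oplus n+1})\rightarrow S^mT_{\mathbb{P}^n}\rightarrow 0.$$
Equivalently, $S^m(\mathcal{O}(1)^{\oplus n+1})=\mathcal{O}(m)^{\binom{m+n}{n}}$ carries a filtration whose graded pieces are $S^pT_{\mathbb{P}^n}$ for $0\le p\le m$. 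Tensoring with $E$ gives a filtration of $E(m)^{\binom{m+n}{n}}$ with graded pieces $E\otimes S^pT_{\mathbb{P}^n}$.

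With this filtration the two implications go as follows.

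\textbf{($\Rightarrow$).} If all $E\otimes S^pT$ have vanishing higher cohomology, then so does every extension of them. Hence $H^{>0}(E(m))=0$ for all $m\ge 0$.

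\textbf{($\Leftarrow$).} Induct on $m$ using the sequence
$$0\rightarrow E\otimes S^{m-1}(\mathcal{O}(1)^{\oplus n+1})\rightarrow E(m)^{\binom{m+n}{n}}\rightarrow E\otimes S^mT\rightarrow 0.$$
The middle term has $H^{>0}=0$ by hypothesis. The left term is $E(m-1)^{\binom{m-1+n}{n}}$, so it also has $H^{>0}=0$ by hypothesis. The long exact sequence then gives $H^{i}(E\otimes S^mT)\cong H^{i+1}(E(m-1)^{\oplus})=0$ for $i\ge 1$; for $i=n$ this holds because $H^{n+1}=0$ on $\mathbb{P}^n$.

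This is in fact cleaner than the original argument, since the one-step sequence avoids any induction on the filtration.

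The main point needing care is the identification of the symmetric-power sequence and the exactness of $S^m$ applied to the Euler sequence. This is standard because the sub is a line bundle: locally the sequence splits, and the map $S^{m-1}B\to S^mB$ is multiplication by the section $A\to B$. Everything else is formal, so no genuine obstacle is expected. We would also note that the binomial multiplicity $\binom{m+n}{n}$ plays no role.
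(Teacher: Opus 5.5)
Your proposal is correct and follows the same route as the paper, which simply repeats the proof of Proposition~\ref{prop4.4} with the Euler sequence on $\mathbb{P}^n$: use affineness of $\mathcal{N}_{n+1,1}$ and of $f$ to reduce to $H^{>0}(E\otimes S^kT_{\mathbb{P}^n})=0$ for all $k$, then compare with $E(m)$ via symmetric powers of the Euler sequence. Your identification of the first filtration step as $E(m-1)^{\binom{m-1+n}{n}}$ is a small streamlining that makes the $(\Leftarrow)$ direction direct rather than inductive, so the word ``induct'' in your write-up is not actually needed.
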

\begin{proof}
The proof is identical to the proof of Proposition \ref{prop4.4} except that this time we should apply the symmetric product to the Euler sequence for $\mathbb{P}^n$.
\end{proof}
Similarly, by Proposition \ref{prop3.14}, we have the following corollary:
\begin{corollary}
Suppose $E$ is a vector bundle on $\mathbb{P}^n$, then 
$$R^{>0}\pi_*f^*E=R^{>0}\pi_*f^*E^\vee=0 \Longleftrightarrow H^{>0}(E(m))=H^{<m}(E(-m-n-1))=0\ \,\text{for}\ m\geq 0
$$
and in this case $\pi_*f^*E$ is maximal Cohen-Macaulay.
\end{corollary}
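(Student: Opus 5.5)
The plan is to argue exactly as in Corollary \ref{coro4.5}: apply the preceding proposition (the $\mathbb{P}^n$ analog of Proposition \ref{prop4.4}) twice, once to $E$ and once to $E^\vee$, and then translate the condition on $E^\vee$ into one on $E$ via Serre duality on $\mathbb{P}^n$. One remark on indexing first. In the displayed condition the upper bound on the cohomological degree should be read as $n=\dim\mathbb{P}^n$, matching the $H^{<2}$ of the case $n=2$. The conditions are therefore $H^{i}(E(-m-n-1))=0$ for $0\le i<n$ and all $m\ge 0$.

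\textbf{Step 1: the condition on $E$.} Since $E$ is a vector bundle on $\mathbb{P}^n$, the preceding proposition gives
$$R^{>0}\pi_*f^*E=0 \iff H^{>0}(E(m))=0 \text{ for } m\ge 0.$$

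\textbf{Step 2: the condition on $E^\vee$.} Pullback of locally free sheaves commutes with taking duals, so $f^*(E^\vee)\cong (f^*E)^\vee$. Applying the same proposition to the vector bundle $E^\vee$ gives
$$R^{>0}\pi_*(f^*E)^\vee=0 \iff H^{>0}(E^\vee(m))=0 \text{ for } m\ge 0.$$

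\textbf{Step 3: Serre duality.} On $\mathbb{P}^n$ we have $\omega_{\mathbb{P}^n}=\mathcal{O}_{\mathbb{P}^n}(-n-1)$, so
$$H^i(E^\vee(m))\cong H^{n-i}\big(E(-m)\otimes\omega_{\mathbb{P}^n}\big)^{*}=H^{n-i}(E(-m-n-1))^{*}.$$
As $i$ ranges over $1,\dots,n$, the index $n-i$ ranges over $0,\dots,n-1$. Hence the vanishing $H^{>0}(E^\vee(m))=0$ for all $m\ge 0$ is equivalent to $H^{<n}(E(-m-n-1))=0$ for all $m\ge 0$. Combining this with Step 1 gives the stated equivalence.

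\textbf{Step 4: the maximal Cohen-Macaulay conclusion.} The sheaf $\mathcal{F}=f^*E$ is a vector bundle on $T^*\mathbb{P}^n$. Under the two vanishing conditions, $R^{>0}\pi_*\mathcal{F}=R^{>0}\pi_*\mathcal{F}^\vee=0$. Proposition \ref{prop3.14} was proved for an arbitrary symplectic resolution of an affine variety with symplectic singularities, and $\pi:T^*\mathbb{P}^n\to\mathcal{N}_{n+1,1}$ is such a resolution. So Proposition \ref{prop3.14} applies and shows that $\pi_*f^*E$ is maximal Cohen-Macaulay.

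There is no real obstacle here; the argument is formal once the preceding proposition is available. The only points needing care are the duality index bookkeeping in Step 3, which produces the bound $n$ rather than $m$, and the identification $(f^*E)^\vee=f^*(E^\vee)$ in Step 2.
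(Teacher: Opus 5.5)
Your proposal is correct and is essentially the paper's argument. You apply the $\mathbb{P}^n$ analogue of Proposition \ref{prop4.4} to $E$ and to $E^\vee$ (using $f^*(E^\vee)\cong(f^*E)^\vee$), translate via Serre duality with $\omega_{\mathbb{P}^n}=\mathcal{O}_{\mathbb{P}^n}(-n-1)$, and conclude with Proposition \ref{prop3.14}. Your reading of $H^{<m}$ as a typo for $H^{<n}$ is also right, and is exactly the form in which the paper uses the condition immediately afterward.
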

And similarly, a generalization of Proposition \ref{prop4.6}
\begin{proposition}
Suppose $E$ is a vector bundle on $\mathbb{P}^n$. Then $E$ is indecomposable if and only if $\pi_*f^*E$ cannot be decomposed into the sum of two modules, both of rank $\geq 1$.
\end{proposition}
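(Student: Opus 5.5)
The plan is to repeat the proof of Proposition \ref{prop4.6} almost verbatim, with $\mathbb{P}^2$ replaced by $\mathbb{P}^n$. The one new input needed is that Proposition \ref{prop3.13}(2) applies to $\pi:T^*\mathbb{P}^n\to\mathcal{N}_{n+1,1}$.

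\textbf{Checking the hypotheses of Proposition \ref{prop3.13}.} The singular locus of $\mathcal{N}_{n+1,1}$ is the origin. Its preimage is the zero section $\mathbb{P}^n\subset T^*\mathbb{P}^n$, which has codimension $n\geq 2$ (we may assume $n\ge 2$; the case $n=1$ is the $A_1$ surface, where the statement is classical). The fibers over closed points have dimension $0$ or $n$. Since $\dim T^*\mathbb{P}^n=2n$, we have $n\le 2n-2$, so the fiber condition holds. Moreover $\pi_*f^*E$ is reflexive: $f^*E$ is locally free, and $\pi$ is an isomorphism away from a locus of codimension $\geq 2$ in the normal variety $\mathcal{N}_{n+1,1}$ (the image of the zero section is a point, of codimension $2n\ge 2$). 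Then $\pi_*f^*E$ is a torsion-free sheaf whose sections extend over codimension $2$, since $\pi_*$ of a vector bundle on $T^*\mathbb{P}^n$ agrees with $j_*$ of its restriction to the smooth locus. Hence $\pi_*f^*E$ is reflexive. Proposition \ref{prop3.13}(2), applied with $M=\pi_*f^*E$ and $\mathcal{F}=f^*E$, then gives $\overline{\pi_*f^*E}=f^*E$.

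\textbf{The two implications.} The ``if'' direction is immediate. If $E=E_1\oplus E_2$ with both summands nonzero, then $\pi_*f^*E=\pi_*f^*E_1\oplus\pi_*f^*E_2$, and both summands have positive rank because $\pi$ is birational.

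For ``only if'', suppose $\pi_*f^*E=M\oplus N$ with $\operatorname{rank}M,\operatorname{rank}N\geq 1$.
1. Pulling back and taking double duals commute with direct sums, so $f^*E=\overline{M}\oplus\overline{N}$.
2. Each of $\overline{M}$ and $\overline{N}$ is a direct summand of a locally free sheaf, hence locally free.
3. Both have positive rank, because $\pi$ is an isomorphism generically and so their ranks equal those of $M$ and $N$.
4. Restrict to the zero section $s:\mathbb{P}^n\hookrightarrow T^*\mathbb{P}^n$. Since $f\circ s=\mathrm{id}$, we get $E=s^*f^*E=s^*\overline{M}\oplus s^*\overline{N}$, with both summands vector bundles of positive rank.

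So $E$ is decomposable, which proves the contrapositive.

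\textbf{Main obstacle.} The only real difficulty is verifying the hypotheses of Proposition \ref{prop3.13}, in particular the reflexivity of $\pi_*f^*E$. Everything else is formal. For reflexivity, I would first try the general argument above about pushforwards across small exceptional loci. If that proves awkward, the fallback is the argument in the proof of Proposition \ref{prop4.2} that $\pi_*\mathcal{F}$ is reflexive, which generalizes: $(\pi_*\mathcal{F}^\vee)^\vee=\pi_*\mathcal{F}$.
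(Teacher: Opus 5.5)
Your argument is the paper's: the $\mathbb{P}^n$ statement is justified by referring back to the proof of Proposition \ref{prop4.6}, which identifies $f^*E\cong\overline{\pi_*f^*E}$ via Proposition \ref{prop3.13}(2), splits it as $\overline{M}\oplus\overline{N}$, and restricts to the zero section, exactly as you do. Your explicit check of the codimension hypothesis for $n\ge 2$ is a useful addition; the reflexivity check is not strictly needed, since the proof of Proposition \ref{prop3.13}(2) uses only that codimension bound and the reflexivity of $f^*E$. The case $n=1$ is in any case immediate from Grothendieck's splitting theorem.
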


Now we want to construct indecomposable bundles on $\mathbb{P}^n$  satisfying $H^{>0}(E(m))=H^{<n}(E(-m-n-1))=0$ for $m\geq 0$. The first idea is to check the twists of the tangent bundle and the cotangent bundle of $\mathbb{P}^n$ from our examples $T_{\mathbb{P}^2}(-1)$ and $T_{\mathbb{P}^2}(-2)=\Omega_{\mathbb{P}^2}(-1)$ for $n=2$. To do this, let's compute the cohomology table of $T_{\mathbb{P}^n}$. Twisting the Euler sequence by $m$ and applying the long exact sequence, we get the following table. Here $\star$ means a nonzero number.
$$
\begin{array}{c|cccccccccc}
i \backslash m
& \cdots & -n-3 & -n-2 & -n-1 & -n & \cdots & -2 & -1 & 0 & 1 \ \cdots \\ \hline
h^n
& \cdots &
\star &
\star &
0 & 0 & \cdots & 0 & 0 & 0 & 0\ \cdots \\[6pt]
h^{n-1}
& \cdots & 0 & 0 & \star & 0 & \cdots & 0 & 0 & 0 & 0\ \cdots \\[6pt]
h^{n-2}
& \cdots & 0 & 0 & 0 & 0 & \cdots & 0 & 0 & 0 & 0\ \cdots \\
\vdots
&        & \vdots & \vdots & \vdots & \vdots & & \vdots & \vdots & \vdots & \vdots \\
h^{1}
& \cdots & 0 & 0 & 0 & 0 & \cdots & 0 & 0 & 0 & 0\ \cdots \\[6pt]
h^{0}
& \cdots & 0 & 0 & 0 & 0 & \cdots & 0 & \star & \star &
\star\ \cdots
\end{array}
$$
Then, one easily sees that $T_{\mathbb{P}^n}(j)$ satisfies our cohomology vanishing condition if and only if $-n\leq j\leq -1$. By Serre duality, we see that $\Omega_{\mathbb{P}^n}(l)$, $-n\leq l\leq -1$ are the only twists of the cotangent bundle with the required cohomology vanishing.

To extend the construction in 4.4 to $\mathbb{P}^n$, we need to again consider the quotient $\mathcal{O}_{\mathbb{P}^n}(-1)^{t}\rightarrow\mathcal{O}_{\mathbb{P}^n}^{t+r}$. First, to make it a vector bundle, we have the following proposition (cf.\cite{Harris1992} Proposition 12.2):

\begin{proposition}\label{prop5.4}
Suppose $r\geq n$, a general quotient $\mathcal{O}_{\mathbb{P}^n}(-1)^{t}\rightarrow\mathcal{O}_{\mathbb{P}^n}^{t+r}$ is locally free.
\end{proposition}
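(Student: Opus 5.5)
The plan is a degeneracy-locus argument for a general map. Let $\phi:\mathcal{O}_{\mathbb{P}^n}(-1)^{t}\to\mathcal{O}_{\mathbb{P}^n}^{t+r}$ be a $(t+r)\times t$ matrix of linear forms. The cokernel $E=\operatorname{coker}\phi$ is locally free of rank $r$ exactly at the points $x$ where $\phi(x)$ is injective, i.e. has rank $t$. So it suffices to show that, for a general choice of $\phi$, the degeneracy locus
$$D_{t-1}(\phi)=\{x\in\mathbb{P}^n \mid \operatorname{rank}\phi(x)\leq t-1\}$$
is empty.

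\textbf{Step 1: the incidence variety.} Let $V=\operatorname{Hom}(\mathbb{C}^t,\mathbb{C}^{t+r})\otimes H^0(\mathcal{O}_{\mathbb{P}^n}(1))$ be the affine space of all such matrices, and set
$$Z=\{(\phi,x)\in V\times\mathbb{P}^n \mid \operatorname{rank}\phi(x)\leq t-1\}.$$
Fix a point $x$. Evaluation $V\to\operatorname{Hom}(\mathbb{C}^t,\mathbb{C}^{t+r})$, $\phi\mapsto\phi(x)$, is a surjective linear map; it is surjective because one can take every entry to be a linear form that does not vanish at $x$. The determinantal variety of $t\times(t+r)$ matrices of rank $\leq t-1$ has codimension $(t-(t-1))\bigl((t+r)-(t-1)\bigr)=r+1$. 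Hence the fiber of $Z\to\mathbb{P}^n$ over $x$ has codimension $r+1$ in $V$.

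\textbf{Step 2: dimension count.} It follows that
$$\dim Z=\dim V+n-(r+1).$$
If $r\geq n$, then $\dim Z\leq\dim V-1<\dim V$. Therefore the projection $Z\to V$ is not dominant, and a general $\phi$ lies outside its image. For such $\phi$, $D_{t-1}(\phi)=\varnothing$, so $\phi(x)$ is injective at every point. Then $\phi$ is injective as a sheaf map and $E$ is locally free of rank $r$.

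\textbf{Step 3: matching the Steiner setup.} The resulting $E$ has exactly the two-step resolution used in the $\mathbb{P}^2$ case, and this recovers \cite{Harris1992} Proposition 12.2. For $n=2$ the bound $r\geq n$ is precisely the hypothesis of Proposition \ref{prop4.15}.

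The main obstacle is Step 1: justifying that the fiber codimension is exactly $r+1$ uniformly in $x$. This comes down to two facts: evaluation at $x$ is a surjective linear map, so it is flat, and the codimension of the generic determinantal variety is given by the standard formula $(t-k)(t+r-k)$ with $k=t-1$. Once these are in place, the rest is formal.
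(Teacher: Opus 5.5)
Your proof is correct and follows essentially the same route as the paper. The paper gives no argument beyond citing \cite{Harris1992} Proposition 12.2, which supplies the codimension formula $(t-k)(t+r-k)$ for generic determinantal varieties, and your incidence-variety count (fiber codimension $r+1$, so $\dim Z\leq \dim V-1$ once $r\geq n$) is exactly how that formula yields the statement. Only the ``if'' direction of your opening claim is used, which is fine since the ``exactly'' fails for degenerate maps such as $\phi=0$. Also, because $\mathbb{P}^n$ is proper the image of $Z$ in $V$ is closed, so a general $\phi$ genuinely avoids it.
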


To consider stability, we have:
\begin{proposition}[\cite{coskun2025stability}, Theorem 5.1]\label{prop5.5}
If two positive integers $r,t$ satisfy
$$ \frac{n}{t}\leq \frac{r}{t}\leq\frac{n-1+\sqrt{n^2+2n-3}}{2}$$
then a general quotient of $\mathcal{O}_{\mathbb{P}^2}(-1)^{\oplus t}\rightarrow\mathcal{O}_{\mathbb{P}^2}^{\oplus t+r}$ is stable.
\end{proposition}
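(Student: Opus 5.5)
The statement is essentially the cited result \cite{coskun2025stability}, Theorem 5.1, specialized to Steiner bundles on $\mathbb{P}^n$. (The $\mathbb{P}^2$ in the displayed map should read $\mathbb{P}^n$.) So the plan is to check that our hypotheses match theirs, and to indicate where the numerical bound comes from, rather than to reprove the theorem.

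\textbf{Step 1: reduce to a Kronecker module.} A map $\phi:\mathcal{O}_{\mathbb{P}^n}(-1)^{t}\to\mathcal{O}_{\mathbb{P}^n}^{t+r}$ is the same as a $(t+r)\times t$ matrix of linear forms. Equivalently, it is a representation of the Kronecker quiver with $n+1$ arrows and dimension vector $(t,t+r)$: one $(t+r)\times t$ scalar matrix for each coordinate $x_0,\dots,x_n$. By Proposition \ref{prop5.4}, the lower bound $r\geq n$ (that is, $n/t\leq r/t$) guarantees that the general quotient $E$ is a vector bundle. Hence $E$ is a Steiner bundle with $c_1(E)=t$ and slope $\mu(E)=t/r$.

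\textbf{Step 2: where the upper bound comes from.} Write $y=(t+r)/t$. The Tits form of the $(n+1)$-Kronecker quiver at $(t,t+r)$ is
$$q(t,t+r)=t^2+(t+r)^2-(n+1)t(t+r)=t^2\bigl(y^2-(n+1)y+1\bigr).$$
It is $\leq 0$ exactly when $y\leq \frac{n+1+\sqrt{(n+1)^2-4}}{2}$. Since $r/t=y-1$, this is
$$\frac{r}{t}\leq\frac{n-1+\sqrt{n^2+2n-3}}{2}.$$
So the hypothesis says precisely that $(t,t+r)$ is an imaginary root, with $r\geq n$. By Kac and King, a general Kronecker module with an imaginary-root dimension vector is $\theta$-stable for the canonical stability condition.

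\textbf{Step 3: transfer stability to the bundle.} This is the main obstacle, and it is the content of \cite{coskun2025stability}. One must show that a destabilizing subsheaf $F\subset E$ with $\mu(F)\geq\mu(E)$ gives a destabilizing subrepresentation of the Kronecker module, or else contradicts genericity. I would invoke \cite{coskun2025stability}, Theorem 5.1 at this point, after checking that its hypotheses are exactly the two inequalities above. Their argument runs through the Gaeta-type resolution and the stack of prioritary sheaves. If a self-contained argument were wanted, the first thing to try would be Hoppe's criterion, as in Section \ref{S4}: $E$ is stable if $H^0\bigl((\wedge^q E)_{\mathrm{norm}}\bigr)=0$ for $1\le q\le r-1$, and one would try to verify these vanishings for general $\phi$ by upper semicontinuity and a degeneration to a convenient special $\phi$.
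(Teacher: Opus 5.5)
Your proposal is correct and matches the paper: the paper gives no independent argument here and simply quotes \cite{coskun2025stability}, Theorem 5.1, and you likewise rest the statement on that citation, using Proposition~\ref{prop5.4} for local freeness when $r\geq n$ and rightly reading $\mathbb{P}^2$ as $\mathbb{P}^n$. Your Tits-form computation, showing that for $r,t>0$ the inequality $t^2+(t+r)^2-(n+1)t(t+r)\leq 0$ is equivalent to $r/t\leq\frac{n-1+\sqrt{n^2+2n-3}}{2}$, correctly explains where the constant comes from, but it is not needed; your remarks on how the cited proof runs and on Hoppe's criterion also play no role in the argument.
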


To consider cohomology vanishing, we need to cite two results in \cite{coskun2025stability}.

\begin{proposition}[\cite{coskun2025stability} Theorem 2.11]\label{prop5.6}
Let $V$ be a bundle of rank $kr$ on $\mathbb{P}^n$ given by a general presentation of the form
$$
0 \rightarrow \mathcal{O}_{\mathbb{P}^n}(-1)^{kt} \rightarrow \mathcal{O}_{\mathbb{P}^n}^{\,k(r+t)} \rightarrow V \rightarrow 0.
$$
Set $\alpha=\left\lceil \frac{tn}{r} \right\rceil$ and $\beta=\left\lfloor \frac{tn}{r} \right\rfloor$. 
If
$$
k \ge\max\left\{\frac{1}{4(t+r)}\binom{\alpha+n-1}{n}^{2}\binom{\alpha+n}{n},\;\frac{1}{4t}\binom{\beta+n-1}{n}^{3}\right\},
$$
then $V$ has natural cohomology.
\end{proposition}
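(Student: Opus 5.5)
The plan is to reduce natural cohomology of $V$ to a maximal-rank statement for a multiplication map, and then show that statement holds for a general matrix once $k$ is large.

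\emph{Step 1: only two cohomology groups can fail.} Twist the presentation by $m$ and take the long exact sequence. Since $H^i(\mathcal{O}_{\mathbb{P}^n}(j))=0$ for $0<i<n$, all of $H^1,\dots,H^{n-2}$ of $V(m)$ vanish for every $m$. For $m\ge 0$ we get $H^{>0}(V(m))=0$, and for $m\le -1$ we get $H^0(V(m))=0$. The only remaining groups sit in
$$0\to H^{n-1}(V(m))\to H^n(\mathcal{O}(m-1))^{kt}\xrightarrow{\ \phi^T_m\ } H^n(\mathcal{O}(m))^{k(r+t)}\to H^n(V(m))\to 0 .$$
Dualizing with Serre duality and setting $j=-m-n-1$, natural cohomology of $V$ becomes the following claim. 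For a general $kt\times k(r+t)$ matrix $A$ of linear forms, the map
$$\mu_j(A)\colon S_j\otimes\mathbb{C}^{k(r+t)}\longrightarrow S_{j+1}\otimes\mathbb{C}^{kt}$$
has maximal rank for every $j\ge 0$. Here $S=\mathbb{C}[x_0,\dots,x_n]$.

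\emph{Step 2: reduce to the critical degrees.} Compare $(r+t)\binom{j+n}{n}$ with $t\binom{j+n+1}{n}$. The ratio of the two binomials is $(j+n+1)/(j+1)$, so the source is larger exactly when $j+1\ge tn/r$. The critical degrees are therefore $j+1=\beta$ and $j+1=\alpha$. Surjectivity propagates upward: if $\mu_j(A)$ is onto, then so is $\mu_{j+1}(A)$, because $S_{j+2}=S_1S_{j+1}$. Injectivity propagates downward: a kernel element in degree $j-1$, multiplied by a general linear form, gives a kernel element in degree $j$. Hence it suffices to prove two things.
\begin{itemize}
\item $\mu_{\beta-1}(A)$ is injective (when $\beta\ge1$).
\item $\mu_{\alpha-1}(A)$ is surjective.
\end{itemize}
These are finitely many open conditions, so each only needs one matrix $A$ for which it holds.

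\emph{Step 3: the critical degrees, using large $k$.} View $A$ as a representation of the generalized Kronecker quiver with $n+1$ arrows and dimension vector $(k(r+t),kt)$. Then $\mu_j$ is the induced representation with $\dim S_j$ and $\dim S_{j+1}$ spaces. Because the sizes are multiples of $k$, I would take $A$ block-structured: a direct sum, or a general gluing, of many smaller presentations whose ranks are chosen so that their critical-degree behaviors complement one another. The divisibility and rounding defects $\alpha-tn/r$ and $tn/r-\beta$ are then absorbed once $k$ is large.

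Concretely, I would bound the possible kernel, respectively cokernel, of $\mu$ in the critical degree by an incidence or dimension count on the Grassmannian of candidate kernel subspaces. The count of bad loci involves products of the binomials $\binom{\alpha+n-1}{n}$ and $\binom{\alpha+n}{n}$, respectively $\binom{\beta+n-1}{n}$. The condition that this locus has positive codimension in the space of matrices, which has dimension of order $k^2 t(r+t)(n+1)$, should give precisely the stated lower bounds on $k$; the factor $\tfrac14$ comes from optimizing a quadratic.

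The main obstacle is this step: proving generic maximal rank in the critical degree. The first thing I would try is the Kronecker-module/Kac-type argument that a general representation of dimension vector $k\cdot(\text{root})$ behaves generically for $k\gg0$, made effective by the dimension count above.
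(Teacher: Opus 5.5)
Your Steps 1 and 2 are correct and form the standard reduction. Natural cohomology of $V$ is equivalent to $\mu_j(A)$ having maximal rank for every $j\ge 0$. Injectivity propagates downward and surjectivity upward, so it suffices to exhibit one $A$ with $\mu_{\beta-1}(A)$ injective and one with $\mu_{\alpha-1}(A)$ surjective. The paper gives no argument of its own here; it imports the statement from \cite{coskun2025stability}, so all the content of the theorem sits in your Step 3. That step is not a proof. It lists strategies (``I would take'', ``should give precisely the stated lower bounds'', ``the first thing I would try''), and nothing in it establishes either critical-degree statement for any value of $k$. As written, you have reduced the theorem to itself in two degrees.

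Here is concretely what the incidence count you have in mind needs, and what is missing. Let $N=\dim S_{\beta-1}$. A putative kernel vector $v\in S_{\beta-1}\otimes\mathbb{C}^{k(r+t)}$ of tensor rank $s$, spanning $U\subseteq S_{\beta-1}$, moves in a family of dimension $s(N+k(r+t)-s)-1$. The condition $\mu_{\beta-1}(A)v=0$ is exactly $kt\cdot\dim(S_1U)$ independent linear conditions on $A$. The terms linear in $k$ favour the conditions only if $t\dim(S_1U)>(r+t)\dim U$ for \emph{every} nonzero $U\subseteq S_{\beta-1}$. With only the trivial bound $\dim(S_1U)\ge\dim U$, the count fails for every $k$. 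So you need a Macaulay-type lower bound on $\dim(S_1U)$ in terms of $\dim U$, something like $\dim(S_1U)/\dim S_\beta\ge\dim U/\dim S_{\beta-1}$, proved for instance by degenerating to monomial subspaces. The surjectivity side needs the dual statement for subspaces of $S_\alpha^*$ under contraction by linear forms, where each functional imposes $k(r+t)$ times the dimension of its contraction span. You never formulate such a lemma, so neither the existence of \emph{some} bound on $k$ nor the claim that it reproduces the stated bound is established. (The $\tfrac14$ would presumably come from $s(N-s)\le N^2/4$.)

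Your fallback ideas do not close the gap either:
\begin{itemize}
\item Block-diagonal $A$: since $\mu_j(A_1\oplus A_2)=\mu_j(A_1)\oplus\mu_j(A_2)$ is injective (resp.\ surjective) only if each summand's map is, this merely transfers the problem to the summands. Each summand must already have maximal rank in both critical degrees, which requires an independent input such as Proposition~\ref{prop5.7} for summands of slope exactly $\beta$ or $\alpha$, together with divisibility conditions on $kr$.
\item Kac-type arguments: these concern the general Kronecker module $A$ itself, not the rank of the structured maps $\mu_j(A)$, which are far from general maps of their size.
\end{itemize}
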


\begin{proposition}[\cite{coskun2025stability} Corollary 4.3]\label{prop5.7}
For integer $q>0$, a general quotient $\mathcal{O}_{\mathbb{P}^n}(-1)^{t}\rightarrow\mathcal{O}_{\mathbb{P}^n}^{t+qn}$ has natural cohomology.
\end{proposition}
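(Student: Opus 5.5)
The plan is to reduce natural cohomology of $E=\operatorname{coker}\bigl(\mathcal{O}_{\mathbb{P}^n}(-1)^{t}\xrightarrow{\phi}\mathcal{O}_{\mathbb{P}^n}^{t+qn}\bigr)$ to a maximal-rank statement for a single map of graded modules. That statement is open in $\phi$, so it then suffices to exhibit one $\phi$ for which it holds.

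\textbf{Step 1: only $h^{n-1}$ and $h^n$ matter.} Write $r=qn$ and take the long exact sequence of the defining sequence twisted by $m$. For $m\ge -1$ we get $H^{>0}(E(m))=0$, and for $m<0$ we get $H^0(E(m))=0$. Since $H^i(\mathcal{O}(m))=0$ for $0<i<n$, we also get $H^i(E(m))=0$ for $0<i<n-1$. What remains is the four-term sequence
\[
0\to H^{n-1}(E(m))\to H^n(\mathcal{O}(m-1))^{t}\xrightarrow{\phi_m} H^n(\mathcal{O}(m))^{t+r}\to H^n(E(m))\to 0 .
\]
So $E$ has natural cohomology if and only if $\phi_m$ has maximal rank for every $m\le -2$. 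By Serre duality, $\phi_m$ is dual to the degree-$j$ piece, with $j=-m-n-1$, of the graded map
\[
\phi^{T}:S^{t+r}\to S(1)^{t},\qquad S=\mathbb{C}[x_0,\dots,x_n].
\]

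\textbf{Step 2: reduce to finitely many degrees.} The map $\phi^{T}$ is $S$-linear. Its cokernel is generated in degree $-1$, so once it vanishes in some degree it vanishes in all higher degrees. Its kernel is a submodule of a free module, hence torsion-free, so if the kernel vanishes in degree $j$ it vanishes in all lower degrees. The expected dimensions are
\[
(t+r)\binom{j+n}{n}\quad\text{and}\quad t\binom{j+1+n}{n}.
\]
Their difference is a polynomial in $j$ that changes sign once, at a crossing degree $j_0$. Hence maximal rank for all $j$ follows from two conditions: injectivity in degree $j_0$ (the last degree where the source is at most the target) and surjectivity in degree $j_0+1$. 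Each of these is an open condition on $\phi$. This step is where I would use Proposition \ref{prop5.4} to keep $E$ locally free on the same open set.

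\textbf{Step 3 (main obstacle): exhibit one $\phi$ achieving maximal rank at $j_0$ and $j_0+1$.} This is the only non-formal step, and it is where the hypothesis $r=qn$ should enter. My first attempt is a degeneration to a special matrix with block structure adapted to $n$. Using $T_{\mathbb{P}^n}(-1)=\operatorname{coker}(\mathcal{O}(-1)\to\mathcal{O}^{n+1})$ as the basic block with $r=n$, I would stack blocks and then add general linear entries off the diagonal to glue them. The dual map in degree $j$ for one block is the Koszul-type map $S_j^{\,n+1}\to S_{j+1}$, which is surjective for $j\ge0$. I would try to prove maximal rank for the glued matrix by induction on $t$. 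The inductive tool is an elementary modification: adding one column to $\phi$ corresponds to an exact sequence of the form $0\to E'\to E\to \cdots$, and I would compare $h^{n-1},h^n$ in the crossing degrees across that sequence. The divisibility $n\mid r$ should make the numerology in these degrees balance, so that no correction term appears.

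If the induction becomes unmanageable, my fallback is to identify $\phi$ with a general $t$-dimensional space of linear maps. I would then prove the needed injectivity or surjectivity by a dimension count on the incidence variety of pairs ($\phi$, nonzero kernel element). I expect this counting to be the genuinely hard part of the argument.
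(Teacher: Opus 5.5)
Your Steps 1 and 2 are correct. Natural cohomology of $E$ is equivalent to $\phi^{T}\colon S^{t+r}\to S(1)^{t}$ having maximal rank in every degree. Since the kernel is torsion-free and the cokernel is generated in degree $-1$, it suffices to have injectivity at $j_0$ and surjectivity at $j_0+1$, and both conditions are open in $\phi$. (The paper gives no argument of its own here; it only cites Coskun--Huizenga--Smith.)

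The gap is Step 3, which is the whole content of the statement and is not supplied.
\begin{itemize}
\item Your $(n+1)\times 1$ blocks giving $T_{\mathbb{P}^n}(-1)$ fill a $(t+qn)\times t$ matrix only when $t=q$, or $t\le q$ after padding with $\mathcal{O}$ summands. Gluing by off-diagonal entries does not change the shape.
\item In that range the statement is trivial anyway. The inequality $t+r\ge t(n+1)$ forces $j_0\le 0$, and $\phi^{T}_0\colon\mathbb{C}^{t+r}\to S_1\otimes\mathbb{C}^{t}$ is an arbitrary linear map, so it has maximal rank for general $\phi$.
\item The real case is $q<t$. This is exactly where the paper uses the proposition: in Theorem \ref{theo5.8} one has $r<\frac{n-1+\sqrt{n^2+2n-3}}{2}\,t<nt$. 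There $j_0\ge 1$, and you have no candidate $\phi$.
\item Your induction by adding a column replaces $(t,r)$ by $(t+1,r-1)$, so it leaves the class $n\mid r$.
\item You never say where divisibility is used, and the incidence count is not carried out.
\end{itemize}

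One way to fill the gap, which also shows where $n\mid r$ enters, is the following degeneration.
\begin{itemize}
\item Degenerate $\phi$ to a block-diagonal matrix with $q$ blocks of size $(t_i+n)\times t_i$, where $t_i\in\{\lfloor t/q\rfloor,\lceil t/q\rceil\}$ and $\sum t_i=t$. Then $E$ becomes a direct sum of rank-$n$ Steiner bundles.
\item For a general $s\times(s+n)$ linear block, the degeneracy locus in $\mathbb{P}^n$ is empty. So $I_s$ has the maximal depth $n+1$, and the Buchsbaum--Rim complex resolves the finite-length cokernel of $S(-1)^{s+n}\to S^{s}$.
\item The terms of that complex are $S^{s}$, $S(-1)^{s+n}$, and $S(-(s+i-1))^{b_i}$ for $2\le i\le n+1$. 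Hence its regularity is $s-1$, and the cokernel vanishes in degrees $\ge s$.
\item Since $(s+n)\binom{s-1+n}{n}=s\binom{s+n}{n}$, the map $S_{s-1}^{s+n}\to S_s^{s}$ is an isomorphism. So each block is injective for $j\le s-1$ and surjective for $j\ge s-1$.
\item With $t_i\in\{s,s+1\}$, the direct sum is injective for $j\le s-1$ and surjective for $j\ge s$. These are exactly $j_0=s-1$ and $j_0+1=s$ for the pair $(t,qn)$. Your Step 2 openness then gives natural cohomology for general $\phi$.
\end{itemize}
Rank $n$ is what makes this work: the crossing degree of a rank-$n$ block is an integer at which source and target have equal dimension. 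Consequently, balanced blocks switch from injective to surjective in compatible degrees. If two $t_i$ differed by $2$ or more, one block would contribute a kernel and another a cokernel in the same degree.
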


Now suppose $(t,r)=(t,qn)$ or $(ka,kb)$ for some $k>>0$, the general quotient of $\mathcal{O}_{\mathbb{P}^n}(-1)^{t}\rightarrow\mathcal{O}_{\mathbb{P}^n}^{t+r}$ has natural cohomology. The question is how we make use of this to produce the required cohomology vanishing. Now apply the long exact sequence of cohomology to $0\rightarrow\mathcal{O}_{\mathbb{P}^n}(m-1)^{t}\rightarrow\mathcal{O}_{\mathbb{P}^n}^{t+r}(m)\rightarrow V(m)\rightarrow 0$, we see that: $H^0(V(m))=0$ for $m\leq -1$; $H^{i}(V(m))=0$ for $1\leq i\leq n-2$; for $i=n-1$ and $n$, by Serre duality, there are
\begin{align}
\notag
H^{n-1}(V(m))&=\ker (H^n(\mathcal{O}_{\mathbb{P}^n}(m-1)^{t})\rightarrow H^n(\mathcal{O}_{\mathbb{P}^n}(m)^{t+r})\\\notag
&=\text{coker}(H^0(\mathcal{O}_{\mathbb{P}^n}(-m-n-1)^{t+r})\rightarrow H^0(\mathcal{O}_{\mathbb{P}^n}(-m-n)^{t}))'
\end{align}
and 
\begin{align}
\notag
H^n(V(m))&=\text{coker} (H^n(\mathcal{O}_{\mathbb{P}^n}(m-1)^{t})\rightarrow H^n(\mathcal{O}_{\mathbb{P}^n}(m)^{t+r})\\\notag
&=\ker(H^0(\mathcal{O}_{\mathbb{P}^n}(-m-n-1)^{t+r})\rightarrow H^0(\mathcal{O}_{\mathbb{P}^n}(-m-n)^{t}))'
\end{align}
Then one can see that $H^{n-1}(V(m))=0$ for $m\geq-n+1$ and $H^{n-1}(V(-n))=\mathbb{C}^t$; $H^n(V(m))=0$ for $m\geq -n$. The cohomology table of $V$ looks like
$$
\begin{array}{c|cccccccccc}
i \backslash m
& \cdots & -n-3 & -n-2 & -n-1 & -n & \cdots & -2 & -1 & 0 & 1 \ \cdots \\ \hline
h^n
& \cdots & \ast & \ast & \ast & 0 & \cdots & 0 & 0 & 0 & 0\ \cdots \\[6pt]
h^{n-1}
& \cdots & \ast & \ast & \ast & t & \cdots & 0 & 0 & 0 & 0\ \cdots \\[6pt]
h^{n-2}
& \cdots & 0 & 0 & 0 & 0 & \cdots & 0 & 0 & 0 & 0\ \cdots \\
\vdots &   & \vdots & \vdots & \vdots & \vdots & & \vdots & \vdots & \vdots & \vdots \\
h^{1}
& \cdots & 0 & 0 & 0 & 0 & \cdots & 0 & 0 & 0 & 0\ \cdots \\[6pt]
h^{0}
& \cdots & 0 & 0 & 0 & 0 & \cdots & 0 & 0 & \ast & \ast\ \cdots
\end{array}
$$

One can see that for $l\geq -n+1$, there is always $H^{>0}(V(l+m))=0$ for $m\geq 0$, so we can consider $E=V(-n+1)$ so that we already have half of the cohomology vanishing $H^{>0}(E(m))=0$ for $m\geq 0$. For the other half, we know that general $V$ has natural cohomology, so in order to have $H^{<n}(E(m))=H^{<n}(V(m-n+1))=0$ for $m\leq -n-1$, it suffices to have $H^n(V(m-n+1))\neq 0$ for $m\leq -n-1$, i.e., $H^n(V(m))\neq 0$ for $m\leq -2n$. By the computations above, as long as
$$h^0(\mathcal{O}_{\mathbb{P}^n}(-m-n-1)^{t+r})> h^0(\mathcal{O}_{\mathbb{P}^n}(-m-n)^{t})
$$
for $m\leq -2n$, so $H^{n}(V(m))=\ker(H^0(\mathcal{O}_{\mathbb{P}^n}(-m-n-1)^{t+r})\rightarrow H^0(\mathcal{O}_{\mathbb{P}^n}(-m-n)^{t}))'$ is nontrivial. Now this is equivalent to 
$$\binom{-m-n-1+n}{-m-n-1}(t+r)>\binom{-m-n+n}{-m-n}t
$$
Simplifying this, one gets $\frac{r}{t}>\frac{n}{-m-n}$. Now $m\leq -2n$, so $-m-n\geq n$. Hence, as long as $\frac{r}{t}>1$, i.e., $r>t$, this would hold, and we have another half of cohomology vanishing.

Combined with Propositions \ref{prop5.4}, \ref{prop5.5}, \ref{prop5.6}, and \ref{prop5.7}, we have the following theorem.
\begin{theorem}\label{theo5.8}
Suppose $r,t$ are integers satisfying $r\geq n$ and $t<r<\frac{n-1+\sqrt{n^2+2n-3}}{2}t$. Let $\phi$ be a general $ k(t+r)\times kt$ matrix of linear forms on $\mathbb{P}^n$ and let $E_{k}$ be the quotient sheaf defined by the sequence
$$0\rightarrow\mathcal{O}_{\mathbb{P}^2}(-n)^{kt}\xrightarrow{\phi}\mathcal{O}_{\mathbb{P}^2}(-n+1)^{k(t+r)}\rightarrow E_k\rightarrow 0
$$
Then $E_k$ is a stable (hence indecomposable) vector bundle, and
\begin{enumerate}
\item There exists an integer $M_{r,t}$ such that for $k\geq M_{r,t}$, we have 
$$H^{>0}(E_k(m))=H^{<n}(E_k(-m-n-1))=0\,\ \text{for}\ m\geq 0$$ 
\item If $r=qn$ for some integer $q$, we can choose $M_{r,t}=1$ in $(1)$.
\end{enumerate}
\end{theorem}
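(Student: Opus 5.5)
We prove Theorem \ref{theo5.8} by writing $E_k=V_k(-n+1)$, where $V_k$ is the general quotient
$$0\rightarrow\mathcal{O}_{\mathbb{P}^n}(-1)^{kt}\xrightarrow{\phi}\mathcal{O}_{\mathbb{P}^n}^{k(t+r)}\rightarrow V_k\rightarrow 0,$$
which is a Steiner bundle of rank $kr$ with $kt$ generators.

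\textbf{Local freeness and stability.} Since $kr\geq r\geq n$, Proposition \ref{prop5.4} applied to $(kt,kr)$ shows that a general $V_k$ is locally free. The hypothesis $t<r$ gives $t\geq 1$ and $r/t>1$, so $kr/kt=r/t\geq n/(kt)$ holds whenever $r\geq n$. Together with $r/t<\frac{n-1+\sqrt{n^2+2n-3}}{2}$, the ratio hypothesis of Proposition \ref{prop5.5} is satisfied for the pair $(kt,kr)$, because that hypothesis depends only on the ratio and on the lower bound $n$. Hence a general $V_k$ is stable. Twisting preserves both local freeness and stability, so $E_k$ is a stable, hence indecomposable, vector bundle. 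Each of these conditions, and natural cohomology below, is a nonempty open condition on $\phi$, so a general $\phi$ satisfies all of them at once.

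\textbf{Half of the vanishing.} The long exact sequence computed before the theorem gives $H^i(V_k(m))=0$ for $1\leq i\leq n-2$ and all $m$, $H^{n-1}(V_k(m))=0$ for $m\geq -n+1$, and $H^n(V_k(m))=0$ for $m\geq -n$. Since $E_k(m)=V_k(m-n+1)$ with $m-n+1\geq -n+1$ when $m\geq 0$, we get $H^{>0}(E_k(m))=0$ for all $m\geq 0$. This holds for every $k$.

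\textbf{The other half.} We need $H^{<n}(V_k(m'))=0$ for $m'=-m-2n\leq -2n$. First, $H^0(V_k(m'))=0$ for $m'\leq -1$. By the dimension count before the theorem, with $kt$ and $k(t+r)$ in place of $t$ and $t+r$, we have $h^n(V_k(m'))>0$ for $m'\leq -2n$, because the required inequality reduces to $r/t>n/(-m'-n)$ and this follows from $r>t$ and $-m'-n\geq n$. If $V_k$ has natural cohomology, at most one cohomology group is nonzero in each twist, so $H^{n-1}(V_k(m'))=0$ and the required vanishing follows. It therefore remains to guarantee natural cohomology.
\begin{itemize}
\item For (1), take $M_{r,t}$ to be the ceiling of the bound in Proposition \ref{prop5.6}, with $\alpha=\lceil tn/r\rceil$ and $\beta=\lfloor tn/r\rfloor$. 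That proposition then gives natural cohomology for all $k\geq M_{r,t}$.
\item For (2), if $r=qn$ then $kr=(kq)n$, so Proposition \ref{prop5.7} applied to $(kt,kq)$ gives natural cohomology for every $k\geq 1$. Hence $M_{r,t}=1$ works.
\end{itemize}

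\textbf{Main obstacle.} The delicate step is matching the hypotheses of the cited results from \cite{coskun2025stability}, which are stated in the scaled form $(kt,kr)$, and making sure all the open conditions on $\phi$ are simultaneously nonempty. Once natural cohomology is secured, the rest is the bookkeeping already carried out before the statement.
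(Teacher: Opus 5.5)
Your proposal is correct and follows the paper's own argument. You set $E_k=V_k(-n+1)$, get local freeness and stability from Propositions \ref{prop5.4} and \ref{prop5.5} applied to $(kt,kr)$, read off $H^{>0}(E_k(m))=0$ from the long exact sequence, and get $H^{<n}(E_k(-m-n-1))=0$ from $h^n(V_k(m'))>0$ for $m'\leq -2n$ (using $r>t$), combined with the natural cohomology supplied by Proposition \ref{prop5.6} for $k\geq M_{r,t}$, or by Proposition \ref{prop5.7} when $n\mid r$. Your remark that all the generic conditions are open and can be imposed simultaneously is something the paper leaves implicit. One small correction: $t\geq 1$ comes from $r<\frac{n-1+\sqrt{n^2+2n-3}}{2}t$ together with $r\geq n$, not from $t<r$.
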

For $n\geq 3$, it's easy to check that for any $r\geq n$, we have $r<\frac{n-1+\sqrt{n^2+2n-3}}{2}(r-1)$, so we can let $t=r-1$. Then Proposition \ref{prop3.14}, Corollary \ref{coro4.5}, and Proposition \ref{prop4.6} give us
\begin{corollary}\label{coro5.9}
Suppose $n\geq 3$. For $r\geq n$, there is an integer $M_{r}$ such that for $k\geq M_{r}$, there exists an indecomposable maximal Cohen-Macaulay module on $\mathcal{N}_{n+1,1}$ of rank $kr$. If $n|r$, then we can set $M_{r}=1$.
\end{corollary}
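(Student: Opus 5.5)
The plan is to combine Theorem \ref{theo5.8} with the pushforward machinery already set up. Fix $n\geq 3$ and $r\geq n$, and take $t=r-1$. First I will check the numerical hypothesis of Theorem \ref{theo5.8}, namely $t<r<\frac{n-1+\sqrt{n^2+2n-3}}{2}t$. The left inequality is trivial. For the right one, set $c_n=\frac{n-1+\sqrt{n^2+2n-3}}{2}$. For $n\geq 3$ we have $\sqrt{n^2+2n-3}\geq\sqrt{12}>3$, so $c_n>\frac{n+2}{2}\geq\frac52$. The condition $r<c_n(r-1)$ is equivalent to $r>\frac{c_n}{c_n-1}$. Since $c_n>2$, the right-hand side is less than $2$, and this holds because $r\geq n\geq 3$.

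With this choice, Theorem \ref{theo5.8} gives an integer $M_r:=M_{r,r-1}$ with the following property. For every $k\geq M_r$, the general quotient $E_k$ of $\mathcal{O}_{\mathbb{P}^n}(-n)^{kt}\to\mathcal{O}_{\mathbb{P}^n}(-n+1)^{k(t+r)}$ is a stable vector bundle of rank $k(t+r)-kt=kr$. It also satisfies $H^{>0}(E_k(m))=H^{<n}(E_k(-m-n-1))=0$ for all $m\geq 0$. When $n\mid r$, part (2) of Theorem \ref{theo5.8} lets us take $M_r=1$.

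Next I pass to $\mathcal{N}_{n+1,1}$ by setting $M=\pi_*f^*E_k$. Apply the $\mathbb{P}^n$-analogue of Corollary \ref{coro4.5}, i.e.\ the corollary following the generalization of Proposition \ref{prop4.4}. Via Serre duality, $H^{<n}(E_k(-m-n-1))=H^{>0}(E_k^\vee(m))^{\vee}$, so the vanishing above gives $R^{>0}\pi_*f^*E_k=R^{>0}\pi_*f^*E_k^\vee=0$. Since $f^*E_k^\vee=(f^*E_k)^\vee$, Proposition \ref{prop3.14} then shows that $M$ is maximal Cohen-Macaulay. Its rank is $kr$, because $\pi$ is birational and $f^*E_k$ has rank $kr$.

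For indecomposability, stability of $E_k$ implies that $E_k$ is indecomposable. The $\mathbb{P}^n$-version of Proposition \ref{prop4.6} then shows that $M$ admits no decomposition into two summands of positive rank. That proposition relies on Proposition \ref{prop3.13}(2), whose hypotheses hold here: the exceptional locus is the zero section, of codimension $n\geq 2$, and its fibres have dimension at most $n\leq 2n-2$.

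A full $\mathcal{O}$-module decomposition could a priori have a rank-zero summand. Such a summand would be torsion, which is impossible because $M=\pi_*$ of a torsion-free sheaf is torsion-free. Hence $M$ is indecomposable.

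The main obstacle is not in this assembly but in Theorem \ref{theo5.8} itself, specifically the cohomology vanishing. Since that theorem is available here, the only genuine checks are the inequality for $t=r-1$ and the rank-zero-summand issue, both handled above.
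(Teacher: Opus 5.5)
Your proposal is correct and follows the paper's own argument. Like the paper, you take $t=r-1$, check $r<\frac{n-1+\sqrt{n^2+2n-3}}{2}(r-1)$, apply Theorem~\ref{theo5.8}, and push forward using the $\mathbb{P}^n$-analogues of Corollary~\ref{coro4.5}, Proposition~\ref{prop3.14} and Proposition~\ref{prop4.6}. Your additional checks fill in details the paper leaves implicit: the inequality via $c_n>2$, the hypotheses of Proposition~\ref{prop3.13}(2), and the exclusion of a rank-zero torsion summand.
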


\ 

\bibliographystyle{amsalpha}
\bibliography{references}

@article{Beauville2000SymplecticSingularities,
  author  = {Beauville, Arnaud},
  title   = {Symplectic singularities},
  journal = {Inventiones Mathematicae},
  volume  = {139},
  number  = {3},
  year    = {2000},
  pages   = {541--549},
  doi     = {10.1007/s002220000088}
}

@article{ItoNakamura1996,
  author  = {Ito, Yukari and Nakamura, Iku},
  title   = {McKay correspondence and Hilbert schemes},
  journal = {Proceedings of the Japan Academy, Series A, Mathematical Sciences},
  volume  = {72},
  number  = {7},
  year    = {1996},
  pages   = {135--138}
}

@article{Nakajima1994Instantons,
  author  = {Nakajima, Hiraku},
  title   = {Instantons on ALE spaces, quiver varieties, and Kac--Moody algebras},
  journal = {Duke Mathematical Journal},
  volume  = {76},
  number  = {2},
  year    = {1994},
  pages   = {365--416},
  doi     = {10.1215/S0012-7094-94-07613-8}
}

@article{CrawleyBoevey2001ExceptionalFibres,
  author  = {Crawley-Boevey, William},
  title   = {On the exceptional fibres of Kleinian singularities},
  journal = {American Journal of Mathematics},
  volume  = {122},
  number  = {5},
  year    = {2000},
  pages   = {1027--1037},
  doi     = {10.1353/ajm.2000.0030}
}

@article{Nakajima1998QuiverKacMoody,
  author  = {Nakajima, Hiraku},
  title   = {Quiver varieties and {K}ac--{M}oody algebras},
  journal = {Duke Mathematical Journal},
  volume  = {91},
  number  = {3},
  year    = {1998},
  pages   = {515--560},
  doi     = {10.1215/S0012-7094-98-09119-7}
}

@incollection{Ginzburg2010Nakajima,
  author    = {Ginzburg, Victor},
  title     = {Lectures on Nakajima's quiver varieties},
  booktitle = {Geometric Methods in Representation Theory II},
  series    = {S\'eminaires et Congr\`es},
  volume    = {24-II},
  pages     = {145--219},
  year      = {2010},
  publisher = {Soci\'et\'e Math\'ematique de France}
}

@misc{Buchweitz1986,
  author       = {Buchweitz, Ragnar-Olaf},
  title        = {Maximal Cohen--Macaulay Modules and Tate Cohomology over Gorenstein Rings},
  year         = {1986},
  note         = {Unpublished manuscript, University of Hannover},
  url          = {https://hdl.handle.net/1807/16682}
}

@book{Hartshorne1977,
  author    = {Hartshorne, Robin},
  title     = {Algebraic Geometry},
  series    = {Graduate Texts in Mathematics},
  volume    = {52},
  publisher = {Springer-Verlag},
  address   = {New York},
  year      = {1977}
}

@book{Eisenbud1995,
  author    = {Eisenbud, David},
  title     = {Commutative Algebra with a View Toward Algebraic Geometry},
  series    = {Graduate Texts in Mathematics},
  volume    = {150},
  publisher = {Springer-Verlag},
  address   = {New York},
  year      = {1995}
}

@article{King1994,
  author    = {King, A.~D.},
  title     = {Moduli of representations of finite dimensional algebras},
  journal   = {The Quarterly Journal of Mathematics},
  volume    = {45},
  number    = {4},
  pages     = {515--530},
  year      = {1994},
  doi       = {10.1093/qmath/45.4.515}
}

@article{ArtinVerdierEsnault1985,
  author    = {Artin, Michael and Verdier, Jean-Louis and Esnault, Hélène},
  title     = {Reflexive modules over rational double points},
  journal   = {Mathematische Annalen},
  volume    = {270},
  number    = {1},
  pages     = {79--82},
  year      = {1985},
  publisher = {Springer}
}

@article{Brieskorn1970,
  author  = {Brieskorn, Egbert},
  title   = {Singular elements of semi-simple algebraic groups},
  journal = {Inventiones Mathematicae},
  volume  = {9},
  year    = {1970},
  pages   = {255--289}
}

@book{ChrissGinzburg1997,
  author    = {Chriss, Neil and Ginzburg, Victor},
  title     = {Representation Theory and Complex Geometry},
  series    = {Progress in Mathematics},
  publisher = {Birkh{\"a}user},
  address   = {Boston},
  year      = {1997}
}

@article{Orlov2004,
  author       = {Orlov, Dmitri},
  title        = {Triangulated categories of singularities and {D}-branes in Landau--Ginzburg models},
  journal      = {Proceedings of the Steklov Institute of Mathematics},
  volume       = {246},
  year         = {2004},
  pages        = {227--248},
  arxiv        = {math/0302304}
}

@article{Lu2020TriangleEquivalences,
  author  = {Lu, Li},
  title   = {Triangle equivalences and Gorenstein schemes},
  journal = {International Journal of Mathematics and Computer Science},
  volume  = {15},
  number  = {1},
  year    = {2020},
  pages   = {301--307}
}

@book{Weibel1994,
  author    = {Weibel, Charles A.},
  title     = {An Introduction to Homological Algebra},
  series    = {Cambridge Studies in Advanced Mathematics},
  volume    = {38},
  publisher = {Cambridge University Press},
  address   = {Cambridge},
  year      = {1994},
  isbn      = {978-0-521-43500-6}
}

@book{HartshorneRD,
  author    = {Robin Hartshorne},
  title     = {Residues and Duality},
  series    = {Lecture Notes in Mathematics},
  volume    = {20},
  publisher = {Springer-Verlag},
  address   = {Berlin},
  year      = {1966}
}

@book{OSS1980,
  author    = {Okonek, Christian and Schneider, Michael and Spindler, Heinz},
  title     = {Vector Bundles on Complex Projective Spaces},
  series    = {Progress in Mathematics},
  volume    = {3},
  publisher = {Birkh\"auser},
  address   = {Boston},
  year      = {1980}
}

@book{EisenbudHarris2016,
  author    = {Eisenbud, David and Harris, Joe},
  title     = {3264 and All That: A Second Course in Algebraic Geometry},
  series    = {Cambridge University Press},
  publisher = {Cambridge University Press},
  address   = {Cambridge},
  year      = {2016}
}

@article{Hoppe1984,
  author = {Hoppe, Hans},
  title = {Generischer Spaltungstyp und zweite Chernklasse stabiler Vektorraumb{\"u}ndel vom Rang 4 auf $\mathbb{P}^4$},
  journal = {Mathematische Zeitschrift},
  volume = {187},
  year = {1984},
  pages = {345--360}
}

@book{EisenbudSyzygies,
  author    = {Eisenbud, David},
  title     = {The Geometry of Syzygies: A Second Course in Commutative Algebra and Algebraic Geometry},
  series    = {Graduate Texts in Mathematics},
  volume    = {229},
  publisher = {Springer},
  year      = {2005}
}

@book{Harris1992,
  author = {Joe Harris},
  title = {Algebraic Geometry: A First Course},
  series = {Graduate Texts in Mathematics},
  volume = {133},
  publisher = {Springer},
  address = {New York},
  year = {1992}
}

@article{coskun2025stability,
  title={Stability and cohomology of kernel bundles on Pn},
  author={Coskun, Izzet and Huizenga, Jack and Smith, Geoffrey},
  journal={Michigan Mathematical Journal},
  volume={75},
  number={1},
  pages={173--198},
  year={2025},
  publisher={University of Michigan, Department of Mathematics}
}

@article{Leuschke2007,
  author  = {Graham J. Leuschke},
  title   = {Endomorphism Rings of Finite Global Dimension},
  journal = {Canadian Journal of Mathematics},
  volume  = {59},
  number  = {2},
  pages   = {332--350},
  year    = {2007}
}

@article{IyamaWemyss2014,
  author  = {Osamu Iyama and Michael Wemyss},
  title   = {Maximal Modifications and Auslander--Reiten Duality for Non-isolated Singularities},
  journal = {Inventiones Mathematicae},
  volume  = {197},
  number  = {3},
  pages   = {521--586},
  year    = {2014}
}

@book{deBobadillaRomano2024,
  author    = {Javier Fern{\'a}ndez de Bobadilla and Agust{\'\i}n Romano-Vel{\'a}zquez},
  title     = {Reflexive Modules on Normal Gorenstein Stein Surfaces, Their Deformations and Moduli},
  series    = {Memoirs of the American Mathematical Society},
  volume    = {298},
  number    = {1493},
  publisher = {American Mathematical Society},
  year      = {2024},
  pages     = {94}
}

@article{EsnaultKnorrer1985,
  author  = {H{\'e}l{\`e}ne Esnault and Horst Kn{\"o}rrer},
  title   = {Reflexive Modules over Rational Double Points},
  journal = {Mathematische Annalen},
  volume  = {272},
  number  = {4},
  pages   = {545--548},
  year    = {1985},
  doi     = {10.1007/BF01455865}
}

@article{Ishii1992,
  author  = {Akira Ishii},
  title   = {On the Moduli of Reflexive Sheaves on a Surface with Rational Double Points},
  journal = {Mathematische Annalen},
  volume  = {294},
  number  = {1},
  pages   = {125--150},
  year    = {1992},
  doi     = {10.1007/BF01934318}
}
\end{document}